\documentclass[a4paper,11pt]{article}

\usepackage{amsmath}
\usepackage{amssymb}
\usepackage{amsthm}
\usepackage{amsfonts}
\usepackage{mathtools}
\usepackage{graphicx}

\usepackage{titlesec}
\usepackage{enumitem}
\usepackage{mathtools}
\usepackage{listings}
\usepackage{geometry}
\usepackage{bbm}
\usepackage[svgnames]{xcolor}
\usepackage{hyperref}
\usepackage{caption}
\usepackage{mathrsfs}
\usepackage{booktabs}
\usepackage{floatrow}
\newfloatcommand{capbtabbox}{table}[][\FBwidth]
\usepackage{blindtext}
\usepackage{colortbl}
\usepackage{tikz}
\usepackage{pgfplots}
\usepackage{subcaption}
\pgfplotsset{compat=1.16}
\newtheorem{theorem}{Theorem}[section]

\newtheorem{lemma}[theorem]{Lemma}
\newtheorem{remark}[theorem]{Remark}
\newtheorem{definition}[theorem]{Definition}
\newtheorem{conclusion}[theorem]{Conclusion}

\newcommand{\re}{\mbox{\rm Re}}

\newcommand{\D}{\mathcal{D}}

\newcommand{\R}{\mathbb{R}}

\renewcommand{\S}{\mathbb{S}}

\newcommand{\eps}{\varepsilon}

\definecolor{mycolor1}{rgb}{0.00000,0.44700,0.74100}%

\newcommand{\quotes}[1]{``#1''}

\newcommand{\dx}{\hspace{2pt}\mbox{d}x}
\newcommand{\dt}{\hspace{2pt}\mbox{d}t}

\newcommand{\ci}{\mathrm{i}} 

\newcommand{\sR}{\mbox{\rm \tiny R}}

\newcommand{\nablaR}{\nabla_{\hspace{-2pt}\sR,\eps}}
\newcommand{\VR}{V_{\hspace{-1pt}\mbox{\tiny eff}}} 

\newcommand{\Veps}{V_{\eps}}
\newcommand{\Omegaeps}{\Omega_{\eps}}

\newcommand{\tangentspace}[1]{T_{#1}\mathbb{S}}

\newcommand{\calI}{\mathcal{I}}
\newcommand{\calJ}{\mathcal{J}}
\newcommand{\calF}{\mathcal{F}}

\renewcommand{\Re}{\mathrm{Re}}          

\newcommand{\mucrit}{\mu_{\mbox{\tiny crit}}}

\allowdisplaybreaks

\begin{document}

	\begin{center}
		{\LARGE 
				{\LARGE 
				Numerical vortex resolution for the Gross--Pitaevskii equation in the rapid rotation Thomas--Fermi scaling
				\renewcommand{\thefootnote}{\fnsymbol{footnote}}\setcounter{footnote}{0}
				\hspace{-3pt}\footnote{P. Henning acknowledges the support by the Deutsche Forschungsgemeinschaft (DFG, German Research Foundation) through the project grant 551527112. A. Persson acknowledges support by the Swedish research council through the project grant 2022-03543.}}\\
	}
\end{center}

\begin{center}
{\large Patrick Henning,\hspace{-2pt}\footnote[1]{\label{affiliation}Department of Mathematics, Ruhr-University Bochum, DE-44801 Bochum, Germany.\\ Email: \href{mailto:patrick.henning@rub.de}{patrick.henning@rub.de}} \,\,
Anna Persson\footnote[2]{\label{affiliation-2}Department of Information Technology; Division of Scientific Computing, Uppsala University, SE-751 05 Uppsala, Sweden.\,\, Email: \href{mailto:apersson@it.uu.se}{apersson@it.uu.se}, \href{mailto:christos.pilichos@it.uu.se}{christos.pilichos@it.uu.se}}
\,\,and\,\, Christos Pilichos\textsuperscript{\ref{affiliation-2}} 
}\\[2em]
\end{center}

\begin{abstract}
In this paper we analyze finite element approximations of ground states of the Gross--Pitaevskii equation in the rapid rotation Thomas--Fermi scaling. In this regime, the healing length and vortex core size are of order $\eps \ll 1$, while the effective confinement potential may degenerate as the angular velocity approaches a critical value. In this setting, we analyze the $\eps$-dependence of the ground states and show that the local flatness of the energy landscape plays a decisive role for numerical resolution. More precisely, we establish mesh size conditions that guarantee the
existence of discrete ground states in finite element spaces which are quasi-best approximations of an exact ground state. In particular, we prove that the absolute $H^1$-error behaves asymptotically like $h/\eps^2$. However, to enter this asymptotic regime, the mesh size must satisfy a significantly stronger resolution condition than the natural requirement $h \lesssim \eps$. The additional restriction is governed by the first spectral gap of the Riemannian Hessian of the energy functional at the ground state, which measures the local flatness of the energy surface. With this, our results provide an explanation of the mesh resolution required to capture vortex structures in rapidly rotating Bose--Einstein condensates and highlight the interplay between vortex core size, spectral stability, and discretization accuracy.
\end{abstract}

\section{Introduction}
At temperatures close to absolute zero, dilute bosonic gases may undergo a phase transition to a Bose--Einstein condensate (BEC), a state of matter in which a macroscopic fraction of the particles occupies the same quantum state; see, e.g., \cite{Bos24,Ein24,PiS03}. This collective behavior leads to striking quantum effects on a macroscopic scale, among which superfluidity (the ability of the fluid to flow without dissipation) is one of the most prominent features \cite{MAH99}. When a condensate is set into rotation, superfluidity manifests itself through the formation of quantized vortices. As the rotation frequency increases, these vortices arrange in regular lattice patterns and may eventually fill large portions of the condensate.

A widely used mean-field description of rotating Bose--Einstein
condensates is provided by the Gross--Pitaevskii energy functional
\cite{Gro61,Pit61,Sei02}, whose ground states are obtained by
minimizing the energy under a normalization constraint. These ground
states describe stationary configurations of the condensate and capture
vortex structures generated by rotation. In the rapid-rotation
Thomas--Fermi regime, corresponding to strong interactions and angular
velocities that compete with trapping effects at leading order
\cite{Aftalion2006,CRY07}, the healing length and vortex core size
become small and are characterized by a parameter $\eps\ll1$. At the
same time, the effective confinement may weaken as the angular velocity
approaches a critical value, leading to locally flat energy landscapes
near the ground state. Hence, significant challenges arise from the
presence of multiple vortices, small length scales, and locally flat
energy landscapes.

The numerical computation of such ground states must address these challenges and involves two complementary components. On the one hand, iterative methods are required to compute constrained minimizers of the Gross--Pitaevskii energy, where gradient-based approaches and their variants are commonly employed; see, e.g., \cite{AHYY24,AHP20,101093imanumdraf046,APS22,ALT17,BaC13b,BaoDu04,Bao-et-al-2005,CLLZ24,CLYZ25,DaK10,DaP17,FT26,FengTangWangIMA,HeP20,PHMY242,JKM14} and \cite{HJ25} for a recent survey. On the other hand, the accuracy and reliability of the computed states crucially depend on the underlying spatial discretization, which must be sufficiently fine to resolve the small vortex cores and to capture the reduced stability of discrete ground states caused by the local flatness of the energy landscape. In this work, we focus on this latter aspect by analyzing the approximation properties of discrete minimizers in finite element spaces and by deriving $\eps$--sensitive resolution conditions that ensure that they are quasi-best approximations of exact ground states.

Let us briefly review existing results on the error analysis of spatial discretizations for the Gross--Pitaevskii energy. While a substantial body of work is devoted to finite element approximations of ground states, most contributions focus on the non-rotating setting. In this case, the problem reduces to a real-valued minimization problem with a unique positive ground state, which significantly simplifies the analysis. Early $H^1$-error estimates were derived by Zhou \cite{Zho04,Zho07}, although with a suboptimal contribution in the $L^2$-norm. Asymptotically optimal convergence rates in both $L^2$- and $H^1$-norms, as well as for the ground state energy and the chemical potential, were first established by Canc\`es et al. \cite{CCM10}. These results were later extended to more general finite element spaces in \cite{HMP14b,HeP21}. More recently, Hassan et al. \cite{HMW25} proved superconvergence results for the difference between discrete ground states and corresponding best approximations. Related error analyses have also been obtained for variants of the Gross--Pitaevskii model, such as equations with Hartree-type interactions \cite{CHZ11}, as well as for closely related electronic structure models including the Kohn--Sham and Hartree--Fock equations \cite{CDGHZ14,CGHY13,MadayTurinici2000}. Finally, mixed and nonconforming FEM discretizations were analyzed in \cite{GHLP25} and \cite{ZhangZhuChen2026} respectively, with a focus on lower bounds for the ground state energy.

In contrast to the non-rotating case, many of these models -- including rotating Bose--Einstein condensates as well as Kohn--Sham and Hartree--Fock theories -- exhibit non-uniqueness of ground states due to intrinsic symmetries of the energy, such as invariance under unitary transformations or global phase shifts, and possibly additional symmetry-breaking effects \cite{Bao-et-al-2005}. This lack of uniqueness leads to a degeneracy of the Riemannian Hessian at the ground state and requires a more delicate analysis. For the rotating Gross--Pitaevskii problem, this is further complicated by the transition to a complex-valued setting. Corresponding techniques have only recently been developed. In particular, \cite{HY25MathComp} establishes optimal-order a priori error estimates in $L^2$- and $H^1$-norms, as well as for the energy and chemical potential, for finite element discretizations of the rotating Gross--Pitaevskii problem using Lagrange elements of arbitrary order.

While the results in \cite{HY25MathComp} provide optimal-order a priori error estimates for finite element approximations of ground states of the rotating Gross--Pitaevskii energy, they are asymptotic in the mesh size and do not make the dependence on physical parameters explicit. The same applies to the other aforementioned works. In particular, these estimates do not capture the additional effects arising in the rapid rotation Thomas--Fermi scaling. Since the analysis in \cite{HY25MathComp} is not $\eps$--explicit, it cannot reflect the interplay between small vortex core sizes and the degenerating stability properties of the energy landscape as $\eps \to 0$.

In the present work, we provide an error analysis in the $\eps$--dependent regime that reveals additional effects. In contrast to the classical setting, the coercivity of the constrained Hessian deteriorates with $\eps$ and depends on the spectral gap structure at the ground state. As a consequence, the mesh resolution is no longer solely determined by the vortex core size, but must additionally compensate for the reduced stability induced by locally flat energy landscapes. This leads to significantly stronger $\eps$--dependent resolution conditions than suggested by standard resolution considerations. To be more precise, we show that the mesh size must satisfy
\begin{align*}
h \,\,\lesssim \,\,\eps^{(d+2)/2} \,\Bigl(1 - \tfrac{\lambda_1}{\lambda_2}\Bigr)\, \mucrit,
\end{align*}
where $\eps$ denotes the vortex core size, $d$ is the spatial dimension, 
$\bigl(1 - \tfrac{\lambda_1}{\lambda_2}\bigr)$ represents the (first) relative spectral gap of the Riemannian Hessian of the energy at a ground state (and thus measures the local flatness of the energy landscape), and $\mucrit$ is a computable constant that quantifies the proximity to the critical rotation frequency.

To establish the corresponding error analysis, it is not sufficient to merely track the constants in the arguments of \cite{HY25MathComp}; instead, new techniques are required. A first key step is to analyze the $\eps$--dependence of the underlying analytical setting. In particular, we derive $\eps$--explicit bounds for ground states in various norms and quantify the local flatness of the energy landscape through spectral properties of the Riemannian Hessian.

A central ingredient of the analysis is a Ritz projection associated with the constrained Hessian. However, due to the lack of local uniqueness, this operator admits a bounded inverse only on a subspace that does not contain the ground state $u$ itself. As a consequence, a direct construction of such a projection cannot be applied to $u$. We overcome this difficulty by first introducing a suitably modified projection operator with the required stability properties, which can then be shown to reduce to a standard Ritz projection.

Another fundamental difference to previous works, in particular \cite{HY25MathComp}, is the avoidance of compactness arguments. In the existing literature, convergence of discrete ground states is typically established along subsequences, allowing higher-order terms to be absorbed on sufficiently fine meshes. However, such arguments do not provide quantitative information on how fine the mesh must be, and in particular do not yield explicit $\eps$--dependent conditions. To address this issue, we employ a Pousin--Rappaz technique \cite{PoRa94}, which enables a fixed-point argument yielding explicit bounds on the distance between discrete and exact ground states in terms of $\eps$ and stability constants. A related approach has recently been developed in the context of the Ginzburg--Landau equation \cite{CFH25}.\\[0.5em]
\emph{Outline:} The remainder of this paper is organized as follows. In Section~\ref{sec:setting}, we introduce the Gross--Pitaevskii energy in the rapid rotation Thomas--Fermi scaling and establish $\eps$--explicit stability properties of ground states and the associated Hessian. In Section~\ref{sec:main-results}, we present our main results on $\eps$--dependent mesh resolution conditions and approximation properties of discrete minimizers. The corresponding error analysis is carried out in Section~\ref{sec:error-analysis}, where we develop the required projection techniques and establish the main estimates. Finally, in Section~\ref{sec:num-experiments}, we illustrate our theoretical findings by numerical experiments.

\section{The Gross--Pitaevskii energy in the rapid rotation scaling}
\label{sec:setting}

Before presenting the setting, let us fix the basic analytical notation. In the following,  $\mathcal{D} \subset \mathbb{R}^d$ is a Lipschitz domain with 
$d \in \{2,3\}$. For $2 \le q \le \infty$, we denote by $L^q(\mathcal{D})$ 
the usual Lebesgue space of complex-valued functions equipped with the norm 
$\|\cdot\|_{L^q(\mathcal{D})}$. The space $L^2(\mathcal{D})$ is endowed with the real inner product
\begin{align*}
(v,w)_{L^2(\mathcal{D})}
:=
\Re \int_{\mathcal{D}} v \, \overline{w} \, \mathrm{d}x,
\qquad v,w \in L^2(\mathcal{D}),
\end{align*}
where $\overline{w}$ denotes the complex conjugate of $w$. 
With this choice, $L^2(\mathcal{D})$ is regarded as a real Hilbert space 
consisting of complex-valued functions.

Furthermore, $H^1_0(\mathcal{D})$ denotes the Sobolev space of all 
$v \in L^2(\mathcal{D})$ whose weak first-order partial derivatives 
belong to $L^2(\mathcal{D})$ and whose trace vanishes on 
$\partial \mathcal{D}$. 
We equip $H^1_0(\mathcal{D})$ with the real inner product
\begin{align*}
(v,w)_{H^1_0(\mathcal{D})}
:=
(v,w)_{L^2(\mathcal{D})}
+
(\nabla v,\nabla w)_{L^2(\mathcal{D})},
\qquad v,w \in H^1_0(\mathcal{D}).
\end{align*}
The dual space of the real Hilbert space $H^1_0(\mathcal{D})$ is denoted by 
$H^{-1}(\mathcal{D})$.

Finally, we define the canonical identification operator 
$\mathcal{I} : L^2(\mathcal{D}) \to H^{-1}(\mathcal{D})$ by
\begin{align*}
\langle \mathcal{I} v , w \rangle
:=
(v,w)_{L^2(\mathcal{D})}
=
\Re \int_{\mathcal{D}} v \, \overline{w} \, \mathrm{d}x,
\qquad v \in L^2(\mathcal{D}),\; w \in H^1_0(\mathcal{D}).
\end{align*}

\subsection{The setting of the rapid rotation Thomas--Fermi scaling}
We consider the Gross--Pitaevskii equation with angular momentum rotation. In non-dimensional form, the corresponding energy functional is given by 
\begin{align*}
E(w)= \frac{1}{2}\int_{\mathcal{D}} |\nabla w|^2 + V_{\star}\, |w|^2  - \Omega_{\star}\, \bar{w}\, \mathcal{L}_{3}w + \frac{\beta_{\star}}{2} |w|^4 \dx.
\end{align*}
Here $V_{\star} \in L^\infty(\D)$ is a real-valued non-negative trapping potential, $\beta_{\star}>0$ denotes the strength of (repulsive) particle interactions, $\Omega_{\star} \in \mathbb{R}$ the angular velocity and $\mathcal{L}_3 = - \ci \left( x_1 \partial_{x_2} - x_2 \partial_{x_1} \right)$ the $x_3$-component of the angular momentum. 

To introduce the Thomas--Fermi regime, we let
$0<\eps\ll1$ denote a small dimensionless parameter called the Thomas--Fermi parameter. Following \cite[Section 7.2]{BaC13b}, we define $\eps:=\sqrt{\tfrac{\beta}{\beta_{\star}}}$ 
for some fixed constant $\beta\simeq1$ (e.g. $\beta:=1$). The limit $\eps\to0$ corresponds to the Thomas--Fermi regime of strong interactions. Physically, $\eps$ measures the ratio between the healing length and a characteristic macroscopic length scale of the condensate.

In addition, we consider a rapid-rotation regime, that is, the angular velocity scales as $\Omega_{\star} \simeq \eps^{-1}$ as discussed by Correggi et al. \cite{CRY07} (see also \cite{CoYn08}). To retain confinement in the rapid-rotation regime, we consider
trapping potentials satisfying $V_\star \simeq \eps^{-2}$. Otherwise centrifugal effects dominate the trapping potential, cf.~the discussion in Section~\ref{subsec:energy-inner-product} and assumption~\ref{A3}.

In summary, given $\beta \simeq 1$ and with
\begin{align*}
\eps:=\sqrt{\tfrac{\beta}{\beta_{\star}}}, \qquad 
\Omegaeps := \eps \, \Omega_{\star}, \qquad
\Veps := \eps^2 \, V_{\star},
\end{align*}
we obtain that the energy can be written as
\begin{align}
\label{scaled-energy}
E(w)= \frac{1}{2}\int_{\mathcal{D}} |\nabla w|^2 + \frac{\Veps}{\eps^2}\, |w|^2  - \frac{\Omegaeps}{\eps}\, \bar{w}\, \mathcal{L}_{3}w + \frac{\beta}{2\eps^2} |w|^4 \dx.
\end{align}
In this formulation, the factors $\eps^{-2}$ and $\eps^{-1}$ encode the rapid-rotation Thomas--Fermi scaling in which trapping, interaction and rotational effects remain visible in the limit $\eps\to0$. In the strict rapid-rotation regime one has
$$
|\Omegaeps| \simeq 1, \qquad \Veps \simeq 1.
$$
However, our analysis only requires the weaker assumptions that $\Omegaeps$ and $\Veps$ remain bounded independently of $\eps$, i.e.  
$$
|\Omegaeps| \lesssim 1,
\qquad
\|\Veps\|_{L^\infty(\mathcal D)} \lesssim 1,
$$
and therefore also covers slower rotation regimes, where the rotation has no leading order effect in the Thomas-Fermi regime. For example, an admissible choice is $\Omegaeps\,=\,\eps \,\Omega_{\star}$ and $\Veps\,=\, \eps^2 \,V_{\star}$ for $|\Omega_{\star}| = O(1)$ and $\|V_{\star}\|_{L^\infty(\mathcal D)} = O(1)$, which recovers the  semiclassical scaling considered in \cite[Section 7.2]{BaC13b}.

We fix the following assumptions
\begin{enumerate}[label={(A\arabic*)}]
\item \label{A1} $\D \subset \mathbb{R}^d$ is a bounded, convex domain for $d=2,3$ with polygonal boundary.
\item\label{A2} The potential $\Veps \in L^{\infty}(\mathcal{D})$ is real-valued with $\Veps\ge 0$ a.e. in $\mathcal{D}$ and $\|\Veps\|_{L^\infty(\mathcal D)} \lesssim 1$;\\
$\Omegaeps \in \R_{\ge 0}$ with $|\Omegaeps| \lesssim 1$ and $\beta \in \R_{>0}$ with $\beta \simeq 1$.
\end{enumerate}
In this setting and $E$ given by \eqref{scaled-energy}, we seek a ground state such that
\begin{align}
\label{definition-groundstate}
E(u) =\underset{v \in \mathbb{S}}{\inf}\hspace{2pt} E(v)
\qquad \mbox{where } \mathbb{S}:= \{  v \in H^1_0(\D) \mbox{ and } \| v \|_{L^2(\D)} =1 \}.
\end{align}
Due to \ref{A1}-\ref{A2} (in particular since $\D$ is bounded) it is easy to prove that under the above assumptions there exists a (possibly negative) constant $M \in \R$ such that
$$
E(v) \ge M \qquad \mbox{for all } v\in \S.
$$
With this, existence of a ground state follows with standard compactness arguments and the lower semi-continuity of $E$.

Furthermore, by the first-order condition for minimizers, i.e. the Euler--Lagrange equations for the constrained minimization problem, there exists a Lagrange multiplier $\lambda\in\R$ for each ground state $u \in \S$ such that
\begin{align}
\label{GPE-abstract}
\langle E^{\prime}(u) , v \rangle = \lambda \,( u , v)_{L^2(\D)} \qquad \mbox{for all } v\in H^1_0(\D),
\end{align}
with
\begin{align*}
\langle E^{\prime}(u), v \rangle =
\Re \int_{\D}
\nabla u \cdot \nabla\overline{v}
+\tfrac{\Veps}{\eps^2}\,u \overline{v}
-\tfrac{\Omegaeps}{\eps}\,\overline{v}\,\mathcal{L}_3 u
+\tfrac{\beta}{\eps^2}|u|^2 u\,\overline{v}
\,\dx.
\end{align*}
Equation \eqref{GPE-abstract} is known as the Gross--Pitaevskii eigenvalue problem where $\lambda$ is called the ground state eigenvalue.

\subsection{Energy-inner product and weighted $H^1$-norm}
\label{subsec:energy-inner-product}

It is common to express the energy in terms of the effective potential given by 
$$
\frac{1}{\eps^2} \VR(x):= \frac{1}{\eps^2} \left( \Veps(x) - \tfrac{1}{4}\, \Omegaeps^2\, (x_1^2+x_2^2) \right)
$$ 
and the covariant gradient $\nablaR$ by
\begin{align}
\label{def-nablaR}
\nablaR w := \nabla w + \ci \tfrac{\Omegaeps}{2\eps } R^{\top} w
\end{align}
for the divergence-free vector field $R(x):=(x_2,-x_1,0)$ if~$d=3$ and $R(x):=(x_2,-x_1)$~if $d=2$. Physically, $-\nabla \VR$ can be interpreted as the net external force per unit mass acting on the condensate, arising from the trapping potential $\Veps$ (pulling inward) and the centrifugal effect of rotation (pushing outward). In this sense, it describes the combined trap--centrifugal force that determines where the condensate density tends to concentrate (moving according to $-\nabla \VR$).

With this, we introduce the bilinear form 
\begin{align}
\label{R-inner-product}
a_{\eps}(v,w) := \re \Big(  \int_{\mathcal{D}} \nablaR v \cdot \overline{\nablaR w} \dx + \int_{\mathcal{D}} \tfrac{1}{\eps^2} \VR v \, \overline{w} \dx  \Big) .
\end{align}
An easy calculation shows that we can express the energy through $a_{\eps}(\cdot,\cdot)$ as
\begin{align}
\label{energy-expressed-aeps}
E(w) \,\,=\,\, \tfrac{1}{2} a_{\eps}(w,w) + \tfrac{\beta}{4\eps^2} \| w \|_{L^4(\D)}^4 \qquad \mbox{for any } w\in H^1_0(\D).
\end{align}
On unbounded domains, the existence of ground states requires the effective potential $\VR$ to be positive such that the trapping potential is strong enough (towards infinity) to compensate the high centrifugal forces caused by fast rotation. Indeed, while $-\nabla \VR$ determines the direction of the net trap--centrifugal force, the absolute level of $\VR$ controls whether the energy can be lowered by moving mass to larger distances. If $\VR$ becomes negative at infinity, the condensate can continuously decrease its energy by escaping outward, and no minimizer exists.

On bounded domains, this mechanism is absent because the zero Dirichlet boundary condition acts as an infinitely strong trapping potential so that the condensate cannot escape the domain. Even if $\VR$ becomes negative in parts of $\D$, the force $-\nabla \VR$ may push the density toward the boundary, but the condensate remains confined and ground states still exist. 

Nevertheless, in the present work we impose a positivity condition on $\VR$. While not required for existence on bounded domains, it selects a balanced trap--centrifugal regime and yields $\eps$-explicit coercivity bounds that are convenient for the subsequent error analysis. The following fixes the assumption.
\begin{enumerate}[resume,label={(A\arabic*)}]
\item\label{A3}
It holds
\begin{align*}
 \Veps(x) - \frac{1+\eps^2}{4}\Omegaeps^2 (x_1^2+x_2^2)
   = \VR(x) - \frac{\eps^2}{4}\Omegaeps^2 (x_1^2+x_2^2)
   \ge 0
   \qquad \text{for almost all } x\in \D .
\end{align*}
\end{enumerate}
The $\eps$-dependence in \ref{A3} allows the critical rotation frequency to be approached as $\eps\to 0$. Note that adding an arbitrary real constant to $\Veps$ does not change the set of ground states on $\S$, since this only shifts the energy by a constant on the constraint $\|u\|_{L^2(\D)}=1$.

Since $a_{\eps}(\cdot,\cdot)$ in \eqref{R-inner-product}  plays an important role in our error analysis, we will introduce a weighted $H^1$-norm such that $a_{\eps}(\cdot,\cdot)$ is continuous and weakly coercive with respect to this norm. For the same parameter $\eps>0$ from the problem formulation, we define 
\begin{align}
\label{def-H1eps-norm}
\| v \|_{H^1_{\eps}(\D)} := \sqrt{ \tfrac{1}{\eps^{2}} \| v \|_{L^2(\D)}^2 + \| \nabla v \|_{L^2(\D)}^2 }
\qquad \mbox{for } v\in H^1_0(\D).
\end{align}
The next lemma establishes continuity and weak-coercivity of $a_{\eps}(\cdot,\cdot)$ with respect to $\| \cdot \|_{H^1_{\eps}(\D)}$ if assumptions \ref{A1}-\ref{A2} hold. If additionally assumption \ref{A3} is valid, then $a_{\eps}(\cdot,\cdot)$ is also strongly coercive, however, with a coercivity constant that degenerates with $\eps$. 

\begin{lemma}[Continuity and coercivity of $a_{\eps}(\cdot,\cdot)$]
\label{lem:aeps-cont-coercive}
Let Assumptions~\ref{A1}--\ref{A2} hold and let $a_{\eps}(\cdot,\cdot)$ be defined by
\eqref{R-inner-product}. Then there exists an $\eps$-independent constant $C_0>0$, such that for all $v,w\in H^1_0(\D)$,
\begin{align}
\label{eq:aeps-cont}
|a_{\eps}(v,w)|
\;\le\;
C_0\, \| v \|_{H^1_{\eps}(\D)} \, \| w \|_{H^1_{\eps}(\D)}.
\end{align}
Moreover, there are $\eps$-independent constants $c_1>0$, such that for all $v\in H^1_0(\D)$,
\begin{align}
\label{eq:aeps-weak-coercive}
a_{\eps}(v,v)
\;\ge\;
\tfrac{1}{2}\, \| v \|_{H^1_{\eps}(\D)}^2
\;-\;
c_1 \,\tfrac{1}{\eps^2}\,\|v\|_{L^2(\D)}^2.
\end{align}
If, in addition, \ref{A3} holds, then 
\begin{align}
\label{eq:aeps-strong-coercive}
a_{\eps}(v,v)
\;\ge\;
\tfrac{\eps^2}{1+\eps^2} \,\| \nabla v \|_{L^2(\D)}^2
\qquad\text{for all }v\in H^1_0(\D).
\end{align}
\end{lemma}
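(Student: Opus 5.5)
The plan is to expand the covariant gradient and treat the three estimates one at a time. For $v\in H^1_0(\D)$ we have $\nablaR v = \nabla v + \ci\tfrac{\Omegaeps}{2\eps}R^\top v$. Since $\D$ is bounded, $|R(x)|\le \sqrt{x_1^2+x_2^2}\le C_\D$. Combined with $|\Omegaeps|\lesssim 1$ from \ref{A2}, this gives
\begin{align*}
\|\nablaR v\|_{L^2(\D)} \le \|\nabla v\|_{L^2(\D)} + C\,\tfrac{1}{\eps}\|v\|_{L^2(\D)} \lesssim \|v\|_{H^1_\eps(\D)}.
\end{align*}
The same assumptions also give $\|\VR\|_{L^\infty(\D)}\le \|\Veps\|_{L^\infty(\D)}+\tfrac14\Omegaeps^2 C_\D^2 \lesssim 1$. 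Continuity \eqref{eq:aeps-cont} then follows from Cauchy--Schwarz applied to both integrals in \eqref{R-inner-product}, with $\eps^{-2}|(\VR v,w)|\le \|\VR\|_{L^\infty}\,\eps^{-1}\|v\|_{L^2}\,\eps^{-1}\|w\|_{L^2}$.

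For the weak coercivity \eqref{eq:aeps-weak-coercive}, I would expand $|\nablaR v|^2 = |\nabla v|^2 + \tfrac{\Omegaeps^2}{4\eps^2}|R|^2|v|^2 + 2\Re\big(\nabla v\cdot \overline{\ci\tfrac{\Omegaeps}{2\eps}R v}\big)$. The key simplification is that the first two terms together with the $-\tfrac14\Omegaeps^2(x_1^2+x_2^2)/\eps^2$ part of $\VR$ cancel, since $|R|^2=x_1^2+x_2^2$. This gives
\begin{align*}
a_\eps(v,v) = \|\nabla v\|_{L^2}^2 + \tfrac{1}{\eps^2}\int_\D \Veps|v|^2\dx - \tfrac{\Omegaeps}{\eps}\Re\int_\D \bar v\,\mathcal L_3 v\dx,
\end{align*}
which is consistent with \eqref{energy-expressed-aeps}. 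Because $\Veps\ge0$, the potential term may be dropped. I would bound the cross term by Young's inequality,
\begin{align*}
\tfrac{|\Omegaeps|}{\eps}\,C_\D\|v\|_{L^2}\|\nabla v\|_{L^2}\le \tfrac12\|\nabla v\|_{L^2}^2 + \tfrac{C}{2\eps^2}\|v\|_{L^2}^2.
\end{align*}
Adding and subtracting $\tfrac{1}{2\eps^2}\|v\|_{L^2}^2$ then yields \eqref{eq:aeps-weak-coercive} with $c_1=(C+1)/2$, which is $\eps$-independent.

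For the strong coercivity \eqref{eq:aeps-strong-coercive}, I would start from the identity with the covariant gradient instead:
\begin{align*}
a_\eps(v,v)=\|\nablaR v\|_{L^2}^2+\eps^{-2}(\VR v,v).
\end{align*}
Writing $\nabla v = \nablaR v - \ci\tfrac{\Omegaeps}{2\eps}R v$ and applying Young's inequality with a parameter $\delta>0$ gives
\begin{align*}
\|\nabla v\|^2\le (1+\delta)\|\nablaR v\|^2+(1+\delta^{-1})\tfrac{\Omegaeps^2}{4\eps^2}\||R|v\|^2.
\end{align*}
Assumption \ref{A3} says $\VR \ge \tfrac{\eps^2}{4}\Omegaeps^2|R|^2$, so $\eps^{-2}(\VR v,v)\ge \tfrac{\Omegaeps^2}{4}\||R| v\|^2$. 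The idea is to choose $\delta=\eps^{-2}$, so that $1+\delta=\tfrac{1+\eps^2}{\eps^2}$ and $(1+\delta^{-1})\eps^{-2}=\tfrac{1+\eps^2}{\eps^2}$. Multiplying by $\tfrac{\eps^2}{1+\eps^2}$ then gives
\begin{align*}
\tfrac{\eps^2}{1+\eps^2}\|\nabla v\|^2\le \|\nablaR v\|^2+\tfrac{\Omegaeps^2}{4}\||R|v\|^2\le a_\eps(v,v).
\end{align*}

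The main obstacle is this last step: choosing the Young parameter so that the constants match \ref{A3} exactly. Everything else is routine bookkeeping with the boundedness of $\D$.
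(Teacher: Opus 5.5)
Your proposal is correct and follows the paper's proof. Continuity comes from the bound on $\|\nablaR v\|_{L^2(\D)}$ together with $\|\VR\|_{L^\infty(\D)}\lesssim 1$, and strong coercivity comes from Young's inequality with parameter $\eps^{-2}$ combined with \ref{A3}, exactly as in the paper. The only deviation is in the weak coercivity step: you expand $|\nablaR v|^2$ exactly, cancel the centrifugal term against $\VR$ and drop $\Veps\ge 0$, whereas the paper uses the pointwise bound $|\nablaR v|^2\ge\tfrac12|\nabla v|^2-\tfrac{\Omegaeps^2}{4\eps^2}|R|^2|v|^2$ together with $\|\VR\|_{L^\infty(\D)}$. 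Both variants work under \ref{A2}, and you also make explicit the add-and-subtract of $\tfrac{1}{2\eps^2}\|v\|_{L^2(\D)}^2$ that the paper leaves implicit.
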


\begin{proof}
\emph{Continuity:} Since $\D$ is bounded, the vector field $R$ satisfies $\|R\|_{L^\infty(\D)} =: C_R < \infty$. Hence, we can estimate $\nablaR v = \nabla v + \ci \tfrac{\Omegaeps}{2\eps} R^\top v$ as
\begin{eqnarray}
\label{eq:nablaR-H1eps}
\nonumber \|\nablaR v\|_{L^2(\D)}
&\le&
\|\nabla v\|_{L^2(\D)} + |\tfrac{\Omegaeps}{2\eps}|\,\|R\|_{L^\infty(\D)}\,\|v\|_{L^2(\D)}
\;\le\;
\|\nabla v\|_{L^2(\D)} + \frac{|\Omegaeps|C_R}{2}\,\frac{1}{\eps}\,\|v\|_{L^2(\D)} \\
&\overset{\eqref{def-H1eps-norm}}{\le}& C_{\Omegaeps,R}\, \|v\|_{H^1_{\eps}(\D)},
\qquad
C_{\Omegaeps,R} := 1+\tfrac{|\Omegaeps|C_R}{2}.
\end{eqnarray}
Therefore, $(\nablaR v , \nablaR w)_{L^2(\D)} \,\le\, C_{\Omegaeps,R}^2\, \|v\|_{H^1_{\eps}(\D)}\,\|w\|_{H^1_{\eps}(\D)}$. For the potential term we use that $\Veps\in L^{\infty}(\D)$ to see that $\|  \VR \|_{L^{\infty}(\D)} \le \| \Veps \|_{L^{\infty}(\D)} + \tfrac{\Omegaeps^2}{4} C_R^2 $ is bounded independent of $\eps$ (since $\| \Veps \|_{L^{\infty}(\D)}$ is bounded independent of $\eps$ by assumption). Hence, 
\begin{eqnarray*}
(\tfrac{1}{\eps^2}\VR v , w )_{L^2(\D)}
\,\,\,\le\,\,\, \frac{1}{\eps^2}\,\|\VR\|_{L^{\infty}(\D)}\,\|v\|_{L^2(\D)}\,\|w\|_{L^2(\D)} 
&\overset{\eqref{def-H1eps-norm}}{\le}& \|\VR\|_{L^{\infty}(\D)}\,
\|v\|_{H^1_{\eps}(\D)}\,\|w\|_{H^1_{\eps}(\D)},
\end{eqnarray*}
together with \eqref{eq:nablaR-H1eps} proves \eqref{eq:aeps-cont}.\\[0.4em]
\emph{Weak coercivity:} The Young inequality together with $\nablaR v = \nabla v + \ci \tfrac{\Omegaeps}{2\eps} R^\top v$ imply that it holds $|\nablaR v|^2 \ge \tfrac{1}{2}|\nabla v|^2 - \tfrac{1}{4\eps^2} |\Omegaeps|^2 |R|^2 |v|^2$.
Using $|R|\le C_R$ we get
\begin{align*}
\| \nablaR v\|^2 
&\ge 
\tfrac12 \|\nabla v\|_{L^2(\D)}^2
- \tfrac{\Omegaeps^2 C_R^2}{4\eps^2}\,\|v\|_{L^2(\D)}^2.
\end{align*}
We obtain 
\begin{eqnarray*}
a_{\eps}(v,v)
&=& \| \nablaR v\|^2 
  +  \tfrac{1}{\eps^2} \int_{\D} \VR |v|^2\dx \\
&\ge&
\tfrac{1}{2} \|\nabla v\|_{L^2(\D)}^2
- \tfrac{1}{\eps^2} \tfrac{\Omegaeps^2 C_R^2}{4}\,\|v\|_{L^2(\D)}^2
- \tfrac{1}{\eps^2} \| \VR \|_{L^{\infty}(\D)}  \,\|v\|_{L^2(\D)}^2,
\end{eqnarray*}
which proves \eqref{eq:aeps-weak-coercive}.\\[0.4em]
\emph{Strong coercivity:} For any $\eta>0$, Young's inequality yields
\begin{align}
\label{eq:grad-upper}
\|\nabla v\|_{L^2(\D)}^2
\,\,\,\le\,\,\, (1+\eta)\,\| \nablaR v\|_{L^2(\D)}^2
  + \left(1+\tfrac{1}{\eta}\right) \tfrac{1}{\eps^2} \tfrac{|\Omegaeps|^2}{4} \int_{\D} |R|^2 |v|^2 \dx.
\end{align}
On the other hand, assumption~\ref{A3} ensures 
$\VR(x) \ge 
\frac{\eps^2}{4}\,\Omegaeps^2 |R|^2$ (for a.e. $x\in\D$) 
hence
\begin{align}
\label{eq:VR-lower}
\int_{\D} \tfrac{1}{\eps^2}\VR |v|^2\,dx
\;\ge\;
 \tfrac{|\Omegaeps|^2}{4}\int_{\D} |R|^2 |v|^2\dx.
\end{align}
Combining \eqref{eq:VR-lower} with the definition of $a_{\eps}$,
\begin{eqnarray*}
a_{\eps}(v,v)
&=& \int_{\D} |\nablaR v|^2\dx
  + \int_{\D} \tfrac{1}{\eps^2}\VR |v|^2\dx
\;\ge\;
\int_{\D} |\nablaR v|^2\,dx
+ \tfrac{|\Omegaeps|^2}{4} \int_{\D} |R|^2 |v|^2\dx \\
&\overset{\eqref{eq:grad-upper}}{\ge}&
\tfrac{1}{1+\eta} \|\nabla v\|_{L^2(\D)}^2 + \left( 1- \tfrac{1}{\eta} \tfrac{1}{\eps^2} \right) \tfrac{|\Omegaeps|^2}{4} \int_{\D} |R|^2 |v|^2 \dx .
\end{eqnarray*}
The choice $\eta=\eps^{-2}$ proves \eqref{eq:aeps-strong-coercive}.
\end{proof}

\begin{remark}[Critical velocity regime]
\label{remark-critical-velocity-regime}
In Lemma~\ref{lem:aeps-cont-coercive} we obtained
\begin{align*}
a_{\eps}(v,v)
\;\ge\;
\tfrac{\eps^2}{1+\eps^2}\,\|\nabla v\|_{L^2(\D)}^2
\qquad\text{for all } v\in H^1_0(\D).
\end{align*}
which implies for some generic $C>0$ (depending on the Poincar\'e-Friedrichs constant) that
\begin{align*}
a_{\eps}(v,v)
\;\ge\;
C\, \eps^2\,\| v\|_{H^1_{\eps}(\D)}^2
\qquad\text{for all } v\in H^1_0(\D).
\end{align*}
Far from the critical angular velocity this bound is typically pessimistic. 
In a near-critical regime, however, the effective trapping potential $\VR$ may be comparable to the centrifugal potential, i.e.
\begin{align*}
\tfrac{\eps^2}{4}\,\Omegaeps^2 (x_1^2+x_2^2)
\;\le\; \VR(x) \;\le\;
C\,\tfrac{\eps^2}{4}\,\Omegaeps^2 (x_1^2+x_2^2)
\end{align*}
for some constant $C>0$. In this case one expects the coercivity constant of $a_\eps(\cdot,\cdot)$ with respect to the norm $\|\cdot\|_{H^1_{\eps}(\D)}$ to be of order $\eps^2$. Considering the generalized eigenvalue problem
\begin{align*}
a_{\eps}(z_i,v)
= \mu_i\,( z_i, v)_{H^1_{\eps}(\D)}
\qquad\text{for all } v\in H^1_0(\D),
\end{align*}
the optimal coercivity constant is given by
\begin{align}
\label{hat-lambda-min}
\mucrit
:= \inf_{v\in H^1_0(\D)\setminus\{0\}}
\frac{a_\eps(v,v)}{\| v\|_{H^1_{\eps}(\D)}^2},
\end{align}
and in the near-critical regime one expects $\mucrit \sim\eps^2$, whereas away from the critical velocity, i.e. if $ \Veps(x) \ge \frac{1+M^2}{4}\Omegaeps^2 (x_1^2+x_2^2)$ for some positive constant $M=O(1)$, we have $\mucrit\sim 1$.
\end{remark}

\subsection{Stability bounds for the ground state and the ground state energy}

In this section, we will establish $\eps$-explicit bounds for the ground state energy and ground state eigenvalues, as well as for ground states itself in the $L^{\infty}$-norm, the $H^1$-norm and the $H^2$-norm.

For simplicity, we exploit the notation $A \lesssim B$ to abbreviate that $A \le C \, B$ for some constant $C>0$ that is independent of $\eps$, but which can potentially depend on $\D$, $\Veps$, $\Omegaeps$ and $\beta$. 

\begin{lemma}[Bounds for ground state energy and eigenvalue]
Assume~\ref{A1}--\ref{A3}, $0<\eps\ll1$ and let $u \in \S$ denote a ground state in the sense of \eqref{definition-groundstate}, with associated eigenvalue $\lambda \in \R$ such that $E^{\prime}(u) = \lambda \mathcal{I} u$ in $H^{-1}(\D)$. Then it holds
\begin{align}
\label{energy-lambda-gs-bounds}
E(u) \,\,\lesssim \,\, \eps^{-2}
\qquad  \mbox{and} \qquad
\lambda \,\, \lesssim \,\, \eps^{-2}.
\end{align}
\end{lemma}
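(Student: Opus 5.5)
The energy bound follows from minimality: we compare $E(u)$ with the energy of a fixed, $\eps$-independent test function. Then the eigenvalue is expressed through the energy by testing the Euler--Lagrange equation with $u$ itself.

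\emph{Step 1 (energy).} Fix any $\varphi\in H^1_0(\D)\cap L^\infty(\D)$ with $\|\varphi\|_{L^2(\D)}=1$, chosen independently of $\eps$; for instance a normalized smooth bump, or the first Dirichlet Laplace eigenfunction, which is bounded by elliptic regularity on the convex domain \ref{A1}. Since $u$ is a minimizer on $\S$, we have $E(u)\le E(\varphi)$. By \eqref{energy-expressed-aeps} and the continuity bound \eqref{eq:aeps-cont} of Lemma~\ref{lem:aeps-cont-coercive},
\begin{align*}
E(\varphi)\le \tfrac{C_0}{2}\|\varphi\|_{H^1_\eps(\D)}^2+\tfrac{\beta}{4\eps^2}\|\varphi\|_{L^4(\D)}^4
= \tfrac{C_0}{2}\bigl(\eps^{-2}+\|\nabla\varphi\|_{L^2(\D)}^2\bigr)+\tfrac{\beta}{4\eps^2}\|\varphi\|_{L^4(\D)}^4 .
\end{align*}
All norms of $\varphi$ are $\eps$-independent and $\beta\simeq1$, so the right-hand side is $\lesssim\eps^{-2}$ for $\eps\ll1$. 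Hence $E(u)\lesssim\eps^{-2}$.

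\emph{Step 2 (eigenvalue).} Test \eqref{GPE-abstract} with $v=u$ and use $\|u\|_{L^2(\D)}=1$. The quadratic part of $\langle E'(u),u\rangle$ equals $a_\eps(u,u)$, which can be checked directly from \eqref{energy-expressed-aeps}, since $E'$ is the derivative of that expression. This gives
\begin{align*}
\lambda=\langle E'(u),u\rangle = a_\eps(u,u)+\tfrac{\beta}{\eps^2}\|u\|_{L^4(\D)}^4 = 2E(u)+\tfrac{\beta}{2\eps^2}\|u\|_{L^4(\D)}^4 .
\end{align*}
By \eqref{eq:aeps-strong-coercive} (this is where \ref{A3} enters) we have $a_\eps(u,u)\ge0$. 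Therefore
\[
\tfrac{\beta}{4\eps^2}\|u\|_{L^4(\D)}^4\le E(u).
\]
Combining the two displays yields $\lambda\le 2E(u)+2E(u)=4E(u)\lesssim\eps^{-2}$ by Step 1.

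\emph{Expected difficulty.} There is no real obstacle here. The main points to check are that $\langle E'(u),u\rangle$ indeed reproduces $a_\eps(u,u)$, i.e. that the rotation term $-\tfrac{\Omegaeps}{\eps}\Re\int\bar u\,\mathcal L_3u$ together with the $\VR$ correction matches $\|\nablaR u\|^2+\eps^{-2}\int\VR|u|^2$, and that nonnegativity of $a_\eps(u,u)$ is available. Both follow from the identity \eqref{energy-expressed-aeps} and Lemma~\ref{lem:aeps-cont-coercive}.
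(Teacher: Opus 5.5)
Your proof is correct and follows the paper's argument: you bound $E(u)$ by the energy of a fixed, $\eps$-independent normalized test function, and then test the Euler--Lagrange equation with $u$ to get $\lambda = 2E(u)+\tfrac{\beta}{2\eps^2}\|u\|_{L^4(\D)}^4 \le 4E(u)$. The only differences are cosmetic. The paper takes a real-valued test function so that the rotation term drops out, whereas you invoke the continuity bound \eqref{eq:aeps-cont}; and you make explicit the nonnegativity $a_\eps(u,u)\ge 0$ coming from \ref{A3}, which the paper uses tacitly in its final inequality.
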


\begin{proof}
Let $u_0 \in \S$ be a fixed purely real--valued test function, hence $\int_{\mathcal{D}} \overline{u_0}\, \mathcal{L}_{3}u_0 = 0$ and rotation term in the energy disappears. For instance, one may take $u_0$ to be the (positive) ground state for $\Omegaeps = 0$. By minimality of $u$ we have
\begin{eqnarray*}
E(u) &=& \inf_{v\in\S} E(v) \,\le\, E(u_0) \,=\, \tfrac12 \int_{\D} \Big( |\nabla u_0|^2 + \tfrac{\Veps}{\eps^2} |u_0|^2 
+ \tfrac{\beta}{2\eps^2} |u_0|^4 \Big) \dx \\
&\le& \tfrac12 \Big( \|\nabla u_0\|_{L^2(\D)}^2 
+ \tfrac{1}{\eps^2} \|\Veps\|_{L^\infty(\D)} \|u_0\|_{L^2(\D)}^2
+  \tfrac{1}{\eps^2}  \tfrac{\beta}{2} \|u_0\|_{L^4(\D)}^4 \Big) \\
&\overset{\ref{A2}}{\lesssim}&
 \|\nabla u_0\|_{L^2(\D)}^2 + \tfrac{1}{\eps^2} \left( \|u_0\|_{L^2(\D)}^2
 +  \|u_0\|_{L^4(\D)}^4 \right)
\end{eqnarray*}
Since $u_0$ is fixed, all norms of $u_0$ are finite and independent of $\eps$, and $\|u_0\|_{L^2(\D)}=1$. Thus there exists a constant $C_E>0$, independent of $\eps$, such that $E(u) \,\le\, C_E \Big( 1 + \tfrac{1}{\eps^2} \Big)$, which already yields $E(u)=O(\eps^{-2})$ as $\eps\to0$.

For the bound on $\lambda$, we use the constrained Euler--Lagrange relation $E'(u) \,=\, \lambda \,\mathcal{I} u$. Using $\|u\|_{L^2(\D)}=1$, we obtain
\begin{align*}
\lambda 
= \int_{\D} \Big( |\nabla u|^2 + \tfrac{\Veps}{\eps^2} |u|^2 
- \tfrac{\Omegaeps}{\eps}\, \bar{u}\, \mathcal{L}_{3} u 
+ \tfrac{\beta}{\eps^2} |u|^4 \Big)\dx
= 2 E(u) + \tfrac{1}{2} \int_{\D} \tfrac{\beta}{\eps^2} |u|^4 \dx 
\le 4 E(u) \lesssim \eps^{-2}.
\end{align*}
\end{proof}

\begin{lemma}[Stability bounds for the ground state]
\label{lem:stability-bounds-groundstates}
Assume~\ref{A1}--\ref{A3}, $0<\eps\ll1$ and let $u \in \S$ denote a ground state in the sense of \eqref{definition-groundstate}. Then $u \in H^1_0(\D) \cap H^2(\D)$ and it holds
\begin{align*}
\| u \|_{L^{\infty}(\D)}\,\,\lesssim \,\, 1, \quad
\| \nabla u \|_{L^2(\D)}\,\,\lesssim \,\, \eps^{-1}
\quad
\| \nabla u \|_{L^4(\D)}\,\,\lesssim \,\, \eps^{-1}
\quad  \mbox{and} \quad
\| D^2 u \|_{L^2(\D)}\ \,\, \lesssim \,\, \eps^{-2},
\end{align*}
where $D^2 u$ denotes the Hessian of $u$.
\end{lemma}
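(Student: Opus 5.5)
We prove the four bounds in the order $H^1$, $L^\infty$, $H^2$, $W^{1,4}$.

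\emph{The $H^1$ bound.} By \eqref{energy-expressed-aeps}, $\tfrac12 a_\eps(u,u)\le E(u)\lesssim\eps^{-2}$ from \eqref{energy-lambda-gs-bounds}. Lemma~\ref{lem:aeps-cont-coercive} only gives $a_\eps(u,u)\gtrsim \eps^2\|\nabla u\|^2$, which is too weak, so we argue directly. Since $\VR\ge0$ by \ref{A3}, we get $\|\nablaR u\|_{L^2(\D)}^2\le a_\eps(u,u)\lesssim \eps^{-2}$. Then
$$\|\nabla u\|_{L^2(\D)}\le \|\nablaR u\|_{L^2(\D)}+\tfrac{|\Omegaeps| C_R}{2\eps}\|u\|_{L^2(\D)}\lesssim \eps^{-1}.$$
As a by-product, $\tfrac{\beta}{4\eps^2}\|u\|_{L^4(\D)}^4\le E(u)$ gives $\|u\|_{L^4(\D)}\lesssim 1$.

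\emph{The $L^\infty$ bound.} This is the main obstacle, since the equation carries $\eps^{-2}$ factors and elliptic regularity alone produces $\eps$-dependent constants. We would use a maximum-principle / Kato-inequality argument for $\rho=|u|$. Write the equation in magnetic form,
$$-(\nablaR)^*\nablaR u + \eps^{-2}\VR u+\eps^{-2}\beta|u|^2u=\lambda u.$$
Kato's inequality for magnetic Laplacians gives $\Delta|u|\ge \Re(\bar u\,\mathrm{sign}\,\cdot (\nablaR)^*\nablaR\text{-term})$, i.e., in the weak sense,
$$-\Delta\rho+\eps^{-2}(\VR+\beta\rho^2)\rho\le\lambda\rho .$$
At points where $\rho^2>\lambda\eps^2/\beta$, the left side dominates because $\VR\ge0$. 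Testing with $(\rho-k)_+$ for $k^2=\lambda\eps^2/\beta$ therefore yields $\rho\le k$. Since $\lambda\lesssim\eps^{-2}$ by \eqref{energy-lambda-gs-bounds}, we get $k\lesssim1$, i.e.\ $\|u\|_{L^\infty}\lesssim1$. Rigor requires justifying the test function $(|u|-k)_+\,u/|u|\in H^1_0$. Inserting it into \eqref{GPE-abstract} and using the pointwise identity $\Re(\nablaR u\cdot\overline{\nablaR(\phi u/|u|)})\ge \nabla|u|\cdot\nabla\phi$ for $\phi\ge 0$ gives the inequality directly.

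\emph{The $H^2$ bound.} On a convex polygonal domain, $\|D^2u\|_{L^2(\D)}\lesssim\|\Delta u\|_{L^2(\D)}$. From the strong form,
$$\Delta u= \eps^{-2}\VR u-\lambda u + \eps^{-2}\beta|u|^2u + \text{rotation terms},$$
where the rotation terms are $\ci\eps^{-1}\Omegaeps R\cdot\nabla u$ (using $\mathrm{div}R=0$) and $\eps^{-2}\tfrac{\Omegaeps^2}{4}|R|^2u$. With $\|u\|_{L^\infty}\lesssim1$, $\lambda\lesssim\eps^{-2}$ and $\|\nabla u\|_{L^2}\lesssim\eps^{-1}$, each term is $\lesssim\eps^{-2}$ in $L^2$. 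Here the $L^\infty$ bound also controls $|u|^2u$, and $\lambda\ge -C\eps^{-2}$ follows since $E(u)$ is bounded below by $-C\eps^{-2}$ via weak coercivity. Hence $\|D^2u\|_{L^2(\D)}\lesssim\eps^{-2}$.

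\emph{The $L^4$ bound on the gradient.} Use the Gagliardo--Nirenberg inequality
$$\|\nabla u\|_{L^4}^2\lesssim \|u\|_{L^\infty}\,\|D^2u\|_{L^2}+\|u\|_{L^\infty}^2,$$
valid in $d=2,3$ for $u\in H^2\cap H^1_0$ on convex domains; it follows by integrating $\int|\nabla u|^2\nabla u\cdot\nabla\bar u$ by parts. This gives $\|\nabla u\|_{L^4(\D)}^2\lesssim\eps^{-2}$, i.e.\ $\|\nabla u\|_{L^4(\D)}\lesssim\eps^{-1}$. The boundary terms in the integration by parts vanish or have a good sign by convexity; this is a point to check carefully, and otherwise we would fall back on an extension argument.
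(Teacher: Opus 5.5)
Your proposal is correct and follows essentially the same route as the paper: the energy bound gives $H^1$, a maximum-principle argument gives exactly $\|u\|_{L^\infty(\D)}\le\sqrt{\lambda\eps^2/\beta}\lesssim 1$, elliptic regularity on the convex domain with an $O(\eps^{-2})$ right-hand side gives $H^2$, and Gagliardo--Nirenberg gives the $W^{1,4}$ bound. The only deviations are that you derive the $H^1$ bound from $\VR\ge 0$ instead of the G{\aa}rding inequality \eqref{eq:aeps-weak-coercive} (which, contrary to your remark, also suffices and is what the paper uses), and that you write out the truncation/Kato argument that the paper only cites.
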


\begin{proof}
The ground state $u$ satisfies the Euler--Lagrange equation
\begin{align*}
-\Delta u + \tfrac{1}{\eps^2} \Veps u - \tfrac{\Omegaeps}{\eps} \mathcal{L}_3 u 
+ \tfrac{\beta}{\eps^2} |u|^2 u = \lambda \,u
\qquad\text{in }\D,\qquad u=0\text{ on }\partial\D,
\end{align*}
for some $\lambda\in\R$. By standard elliptic regularity on convex domains with bounded coefficients, this implies $u\in H^2(\D)\cap H_0^1(\D)$ and, by embedding, $u\in L^{\infty}(\D)$.

For the $L^\infty$-bound, we use that, by a maximum--principle argument (cf. \cite[Proof of Lemma 3.5]{PHMY242}) applied to the scaled coefficients $\Veps/\eps^2$, $\Omegaeps/\eps$ and $\beta/\eps^2$, one can show
\begin{align*}
\|u\|_{L^\infty(\D)} \,\le\, \sqrt{\tfrac{\lambda}{\beta} \, \eps^2} \, \overset{\eqref{energy-lambda-gs-bounds}}{\lesssim} \, 1,
\end{align*}
which yields the first estimate in the statement.

The estimate in $H^1$ is obtained from the weak coercivity estimate in Lemma \ref{lem:aeps-cont-coercive}. In fact, using $u \in \S$, we have
\begin{align*}
\tfrac{1}{2}\, \| \nabla u \|_{L^2(\D)}^2 
\overset{\eqref{def-H1eps-norm}}{\le}
\tfrac{1}{2}\, \| u \|_{H^1_{\eps}(\D)}^2 
\overset{\eqref{eq:aeps-weak-coercive}}{\le} a_{\eps}(u,u) + c_1 \,\tfrac{1}{\eps^2} 
\le 2 \, E(u)   + c_1 \,\tfrac{1}{\eps^2}  \overset{\eqref{energy-lambda-gs-bounds}}{\lesssim} \eps^{-2}.
\end{align*}
Taking the square root on both sides proves $\| \nabla u \|_{L^2(\D)}\lesssim \eps^{-1}$.

For the $H^2$--bound, we rewrite the Euler-Lagrange equation as
\begin{align*}
-\Delta u = f_\eps
\quad\text{with}\quad
f_\eps := \lambda u - \tfrac{1}{\eps^2} \Veps u + \tfrac{\Omegaeps}{\eps} \mathcal{L}_3 u - \tfrac{\beta}{\eps^2}|u|^2u.
\end{align*}
By elliptic regularity, we have
\begin{align*}
\|u\|_{H^2(\D)} \,\lesssim\, \|f_\eps\|_{L^2(\D)}.
\end{align*}
Using the bounds on the coefficients and the previously obtained estimates, we get
\begin{align*}
\|f_\eps\|_{L^2(\D)}
&\le |\lambda| \|u\|_{L^2(\D)}
+ \tfrac{1}{\eps^2} \|\Veps\|_{L^\infty(\D)} \|u\|_{L^2(\D)}
+ \tfrac{|\Omegaeps|}{\eps} \|\mathcal{L}_3 u\|_{L^2(\D)}
+ \tfrac{\beta}{\eps^2} \|u\|_{L^\infty(\D)}^2 \|u\|_{L^2(\D)} \\
&\lesssim \eps^{-2} 
+\eps^{-1} \|\nabla u\|_{L^2(\D)}
+\eps^{-2}  \,\,\lesssim\,\, \eps^{-2},
\end{align*}
where we used $\|u\|_{L^2(\D)}=1$, $\|\mathcal{L}_3 u\|_{L^2(\D)}\lesssim \|\nabla u\|_{L^2(\D)}$ on bounded domains, the bound $\|\nabla u\|_{L^2(\D)}\lesssim\eps^{-1}$, and $\|u\|_{L^\infty(\D)}\lesssim 1$. Hence $\|u\|_{H^2(\D)}\lesssim \eps^{-2}$, and in particular $\|D^2 u\|_{L^2(\D)} \,\lesssim\, \eps^{-2}$, which proves the $H^2$-estimate.

It remains to prove the $W^{1,4}$-bound. Here we use the Gagliardo–Nirenberg inequality $\| \nabla u \|_{L^4(\D)}^2 \lesssim \| u \|_{L^{\infty}(\D)} \| u\|_{H^2(\D)}$ ($d\le 3$) together with the previous estimates to conclude $\| \nabla u \|_{L^4(\D)} \lesssim \eps^{-1}$.
\end{proof}

\subsection{Second-order conditions for minimizers}
In this subsection we analyze the second-order conditions for constrained minimizers in the rapid rotation Thomas--Fermi regime. 
Adapting the corresponding findings of \cite{HY25MathComp} to the present $\eps$-scaled setting, we examine the structure of the Hessian $E''(u)$ at a ground state and discuss how its coercivity and stability properties depend on the underlying $\eps$-scaling. These properties will be essential for the subsequent error analysis.

First of all, it is easy to check that $E$ is five-times Fr\'echet differentiable with   vanishing derivatives of order $\ge 5$. Using the representation of the energy in \eqref{energy-expressed-aeps}, the first derivative can be computed as
\begin{align*}
\langle E^{\prime}(v), w \rangle = a_{\eps}(v,w) + \tfrac{\beta}{\eps^2} ( |v|^2 v , w )_{L^2(\D)} \qquad \mbox{for } v,w \in H^1_0(\D)
\end{align*}
and the second derivative as 
\begin{align} 
\label{sd-new}
\langle E''(u)v,w \rangle =  a_{\eps}(v,w) + \tfrac{\beta}{\eps^2} ( |u|^2 v + 2 \, \Re (u \overline{v}) \, u , w )_{L^2(\D)}
\end{align}
for $u,v,w \in H^1_0(\D)$. In particular, we have for $u=v$ the useful identity
\begin{align}
\label{relation-secE-firstE}
\langle E^{\prime\prime}(u) u , w \rangle \,\,\,=\,\,\, \langle E^{\prime}(u) , w \rangle  + \tfrac{2 \beta}{\eps^2} ( |u|^2 u , w )_{L^2(\D)} 
\,\,\,=\,\,\, a_{\eps}(u,w)  + \tfrac{3 \beta}{\eps^2} ( |u|^2 u , w )_{L^2(\D)} .
\end{align}
Since the minimization problem \eqref{definition-groundstate} is posed on the unit sphere $\mathbb{S} \subset H^1_0(\D)$, the relevant first and second-order optimality conditions for minimizers are obtained by restricting the test functions to the tangent space at $u \in \mathbb{S}$ given by
\begin{align*}
T_{u}\mathbb{S} \,:=\, 
\{ v\in H^1_0(\D) \,| \, (u,v)_{L^2(\D)} = 0\}.
\end{align*}
This space represents the admissible first-order variations that preserve the constraint. The usual first-order optimality condition for constrained minimizers $u$ yields the existence of a Lagrange multiplier (or eigenvalue) $\lambda\in\R$ such that
\begin{align*}
\langle E^{\prime}(u) , v \rangle = \lambda ( u , v)_{L^2(\D)} \qquad \mbox{for all } v\in H^1_0(\D).
\end{align*}
This condition is equivalent to
\begin{align*}
\langle E^{\prime}(u) - \lambda \mathcal{I} u , v \rangle = 0 \qquad \mbox{for all } v\in T_{u}\mathbb{S}.
\end{align*}
Note that this implies positivity of $\lambda$ as
$$
\lambda = \lambda \, \| u \|_{L^2(\D)}^2 = \langle E^{\prime}(u) , u \rangle = a_{\eps}(u,u)  + \tfrac{\beta}{\eps^2} \|u\|_{L^4(\D)}^4 >0.
$$
The necessary second-order condition for minimizers requires that the constrained Hessian $E''(u)-\lambda \calI$ has no negative eigenvalues on the tangent space, i.e.
\begin{align*}
\langle (E^{\prime\prime}(u) - \lambda \mathcal{I}) v , v \rangle \ge 0 \qquad \mbox{for all } v\in T_{u}\mathbb{S}.
\end{align*}
Ideally, one would hope for strict positivity of the spectrum such that the constrained Hessian has a bounded inverse. However, this is not possible since minimizers are at most locally unique up to constant phase shifts, i.e., it holds $E(u)=E(e^{\ci \omega}u)$ for any phase angle $\omega \in [-\pi,\pi)$. In other words, if $u \in \S$ is a minimizer then $e^{\ci \omega}u \in \S$ is another minimizer. This causes the constrained Hessian to degenerate in the tangential direction $\ci u$, i.e. $(E^{\prime\prime}(u) - \lambda \mathcal{I})(\ci u)=0$, where we refer to \cite{HY25MathComp} for more detailed explanations. The usual sufficient second-order condition for the GPE is therefore obtained by restricting the constrained Hessian to the horizontal space
\begin{align*}
H_u \mathbb{S} := T_{u}\mathbb{S} \cap T_{\ci u}\mathbb{S},
\end{align*}
which removes the phase direction $\ci u$ while retaining all other admissible directions in the tangent space. On this subspace we can expect a positive spectrum of $E^{\prime\prime}(u) - \lambda \mathcal{I}$. This is reflected in the following definition, which says that, except for phase shifts, a minimizer $u \in \S$ is non-degenerate.
\begin{definition}[Quasi-isolation]
A minimizer $u \in \S$ of $E$ with $E^{\prime}(u)= \lambda \calI u$ is called {\it quasi-isolated} if it fulfils the sufficient second-order condition, i.e.
\begin{align*}
\langle E^{\prime\prime}(u) v , v \rangle - \lambda ( v , v)_{L^2(\D)} > 0  \qquad \mbox{for all } v\in H_u \mathbb{S} \setminus \{ 0 \}. 
\end{align*}
\end{definition}
For any ground state $u$, $E^{\prime\prime}(u)$ represents a continuous and weakly coercive bilinear form on $H^1_0(\D)$ where the following lemma specifies the dependence on $\eps$. Furthermore, if $u$ is quasi-isolated then $E^{\prime\prime}(u)$  is strongly coercive on the horizontal space.
\begin{lemma}[Continuity and coercivity of the constrained Hessian]\label{lem:continuity-coercivity-secE}
Assume \ref{A1}-\ref{A3} and let $u\in \S$ denote a ground state with ground state eigenvalue $\lambda\in\R$. Then $E^{\prime\prime}(u)$ is an $H^1_{\eps}$-continuous bilinear form, i.e., for all $v,w\in H^1_0(\D)$ it holds
\begin{align*}
\langle E^{\prime\prime}(u) v ,w \rangle 
\,\,\, \lesssim \,\,\, \| v\|_{H^1_{\eps}(\D)} \, \| w \|_{H^1_{\eps}(\D)}
\end{align*}
and consequently
\begin{align}
\label{continuity-constrained-Hessian}
\langle (E^{\prime\prime}(u)  -\lambda \mathcal{I}) v ,w \rangle 
\,\,\, \lesssim \,\,\, \| v\|_{H^1_{\eps}(\D)} \, \| w \|_{H^1_{\eps}(\D)}.
\end{align}
On $H^1_0(\D)$, the constrained Hessian $(E^{\prime\prime}(u)  -\lambda \mathcal{I})$ is also weakly coercive, i.e., there exists a constant $\tilde{c}_1>0$ such that the following G{\aa}rding inequality holds:
\begin{align}
\label{eq:secE-weak-coercive}
\langle (E^{\prime\prime}(u)  -\lambda \mathcal{I}) v ,v \rangle 
\;\ge\;
\tfrac{1}{2}\, \| v \|_{H^1_{\eps}(\D)}^2
\;-\;
\tilde{c}_1 \,\tfrac{1}{\eps^2}\,\|v\|_{L^2(\D)}^2  \hspace{30pt} \mbox{for all } v\in  H^1_0(\D).
\end{align}
Finally, if $u$ is quasi-isolated, then the constrained Hessian is strongly coercive on the horizontal space, i.e., the exists an $\eps$-dependent constant $\eta(\eps)>0$ such that
 \begin{align}
\label{eq:secE-strong-coercive}
\langle (E^{\prime\prime}(u)  -\lambda \mathcal{I}) v ,v \rangle 
\;\ge\;
\eta(\eps)\, \| v \|_{H^1_{\eps}(\D)}^2 \qquad \mbox{for all } v\in H_u \S
\end{align}
and we can bound 
\begin{align}
\label{bounds-etaeps}
\mucrit \, (1- \tfrac{\lambda_{1}}{\lambda_{2}}) \,\, \le \,\, \eta(\eps) \,\, \le \,\, O(1).
\end{align}
where $\mucrit>0$ is defined in \eqref{hat-lambda-min} and measures closeness to the critical velocity and $\lambda_{1}=\lambda>0$ and $\lambda_2>\lambda_1$ are the smallest and second smallest eigenvalue of $E^{\prime\prime}(u) \vert_{\tangentspace{u}}$.
\end{lemma}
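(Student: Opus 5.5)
The plan is to derive continuity and the Gårding inequality directly from the representation \eqref{sd-new} combined with Lemma~\ref{lem:aeps-cont-coercive} and the stability bounds of Lemma~\ref{lem:stability-bounds-groundstates}. The strong coercivity estimate will then follow from a spectral argument on $\tangentspace{u}$.

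\emph{Continuity.} By \eqref{sd-new},
\[
\langle E''(u)v,w\rangle = a_\eps(v,w) + \tfrac{\beta}{\eps^2}\bigl(|u|^2v+2\Re(u\bar v)u,w\bigr)_{L^2(\D)}.
\]
The first term is handled by \eqref{eq:aeps-cont}. For the second term we use $\|u\|_{L^\infty(\D)}\lesssim 1$, which gives
\[
\tfrac{\beta}{\eps^2}\,3\|u\|_{L^\infty(\D)}^2\|v\|_{L^2(\D)}\|w\|_{L^2(\D)} \lesssim \|v\|_{H^1_\eps(\D)}\|w\|_{H^1_\eps(\D)}
\]
by the definition \eqref{def-H1eps-norm} of the weighted norm. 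Since $\lambda\lesssim\eps^{-2}$ by \eqref{energy-lambda-gs-bounds}, the term $\lambda(v,w)_{L^2(\D)}$ obeys the same bound, which yields \eqref{continuity-constrained-Hessian}.

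\emph{Gårding inequality.} The nonlinear part satisfies
\[
\bigl(|u|^2v+2\Re(u\bar v)u,v\bigr)_{L^2(\D)} = \int_\D |u|^2|v|^2+2(\Re(u\bar v))^2\,dx \ge 0,
\]
so it can simply be dropped. Then \eqref{eq:aeps-weak-coercive} combined with $\lambda\le C\eps^{-2}$ gives \eqref{eq:secE-weak-coercive} with $\tilde c_1=c_1+C$.

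\emph{Strong coercivity and the bounds \eqref{bounds-etaeps}.} The upper bound $\eta(\eps)\lesssim 1$ is immediate from continuity: test with any fixed $v\in H_u\S$ and compare both sides. For the lower bound, consider the eigenvalue problem
\[
\langle E''(u)z,v\rangle=\lambda_i (z,v)_{L^2(\D)}\qquad \text{on } \tangentspace{u}.
\]
I would first check that the phase direction satisfies $E''(u)(\ci u)=\lambda\calI(\ci u)$. Indeed, $\Re(u\overline{\ci u})=0$, so by \eqref{sd-new} we get $E''(u)(\ci u)=E'(\ci u)=\ci$-rotated $E'(u)=\lambda\calI(\ci u)$. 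Hence $\ci u\in\tangentspace{u}$ is an eigenfunction with eigenvalue $\lambda$, and since $u$ is a minimizer, the second-order condition makes $\lambda_1=\lambda$ the smallest eigenvalue. The operator is self-adjoint and has compact resolvent (bounded below by the Gårding inequality, compact embedding $H^1_0\hookrightarrow L^2$), and quasi-isolation ensures that $\lambda$ is simple on $\tangentspace{u}$ with eigenspace $\mathrm{span}\{\ci u\}$. The $L^2$-orthogonal complement of $\ci u$ in $\tangentspace{u}$ is exactly $H_u\S$, so the min--max principle gives $\langle E''(u)v,v\rangle\ge\lambda_2\|v\|_{L^2(\D)}^2$ for $v\in H_u\S$. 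Now write
\[
\langle (E''(u)-\lambda\calI)v,v\rangle = \bigl(1-\tfrac{\lambda_1}{\lambda_2}\bigr)\langle E''(u)v,v\rangle + \tfrac{\lambda_1}{\lambda_2}\bigl(\langle E''(u)v,v\rangle-\lambda_2\|v\|_{L^2(\D)}^2\bigr).
\]
The second bracket is nonnegative. For the first term, the nonnegativity of the nonlinear part gives $\langle E''(u)v,v\rangle\ge a_\eps(v,v)\ge\mucrit\|v\|_{H^1_\eps(\D)}^2$ by \eqref{hat-lambda-min}. Together these yield $\eta(\eps)\ge\mucrit(1-\lambda_1/\lambda_2)$.

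The main obstacle is the rigorous spectral step. It requires justifying that the self-adjoint operator induced on $\tangentspace{u}$ (with the real $L^2$ structure) has discrete spectrum, that $\lambda_1=\lambda$ is attained at $\ci u$, and that quasi-isolation forces $\lambda_2>\lambda_1$ with $H_u\S$ being exactly the orthogonal complement of the first eigenspace. I would handle this via the Lax--Milgram/compactness argument on the closed subspace $\tangentspace{u}$, following \cite{HY25MathComp}.
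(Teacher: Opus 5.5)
Your proof is correct and follows essentially the same route as the paper. Continuity and the G{\aa}rding inequality come from \eqref{sd-new}, Lemma~\ref{lem:aeps-cont-coercive}, $\|u\|_{L^\infty(\D)}\lesssim 1$ and $\lambda\lesssim\eps^{-2}$; your convex-combination identity is algebraically the same as the paper's step of multiplying $\langle (E''(u)-\lambda\calI)v,v\rangle\ge(\lambda_2-\lambda)\|v\|_{L^2(\D)}^2$ by $\lambda/(\lambda_2-\lambda)$ and adding it to $\langle (E''(u)-\lambda\calI)v,v\rangle\ge\mucrit\|v\|_{H^1_{\eps}(\D)}^2-\lambda\|v\|_{L^2(\D)}^2$. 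Your checks that $\ci u$ is the simple eigenfunction for $\lambda$ on $\tangentspace{u}$ and that the spectrum is discrete (so the infimum over $H_u\S$ is attained and $\lambda_2>\lambda_1$) add detail that the paper simply asserts.
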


\begin{proof}
The continuity of $E^{\prime\prime}(u)$ follows from the $L^{\infty}$-bound for ground states in Lemma \ref{lem:stability-bounds-groundstates} together with the representation \eqref{sd-new}. For the continuity of $E^{\prime\prime}(u) - \lambda \mathcal{I}$ we additionally use the estimate $\lambda \lesssim \eps^{-2}$ from \eqref{energy-lambda-gs-bounds}.
The weak coercivity of $E^{\prime\prime}(u)  -\lambda \mathcal{I}$ follows from the weak coercivity of $a_{\eps}(\cdot,\cdot)$ in \eqref{eq:aeps-weak-coercive} together with again $\lambda \lesssim \eps^{-2}$. For the strong coercivity in \eqref{eq:secE-strong-coercive}, we let $\lambda_2$ denote the smallest eigenvalue of $E^{\prime\prime}(u)$ on $H_u \S$ (equivalently second smallest eigenvalue of $E^{\prime\prime}(u)$ on $\tangentspace{u}$), which fulfils  $\lambda_2>\lambda=\lambda_1$ by quasi-isolation. Hence, 
 \begin{align}
 \label{bounds-etaeps-step1}
\langle (E^{\prime\prime}(u)  -\lambda \mathcal{I}) v ,v \rangle 
\;\ge\;
(\lambda_2 - \lambda)\, \| v \|_{L^2(\D)}^2 \qquad \mbox{for all } v\in H_u \S.
\end{align}
Using \eqref{sd-new} in combination with the definition of $\mucrit$ in \eqref{hat-lambda-min} yields 
 \begin{align}
  \label{bounds-etaeps-step2}
\langle (E^{\prime\prime}(u)  -\lambda \mathcal{I}) v ,v \rangle 
\;\ge\;
a_{\eps}(v,v)  - \lambda\, \| v \|_{L^2(\D)}^2 
\;\ge\; \mucrit \, \| v\|_{H^1_{\eps}(\D)}^2  - \lambda\, \| v \|_{L^2(\D)}^2.
\end{align}
Multiplying \eqref{bounds-etaeps-step1} with $\tfrac{\lambda}{\lambda_2-\lambda}$ and adding it to \eqref{bounds-etaeps-step2} gives
 \begin{align*}
\langle (E^{\prime\prime}(u)  -\lambda \mathcal{I}) v ,v \rangle 
\;\ge\;
\mucrit \, (1- \tfrac{\lambda_{1}}{\lambda_{2}})\, \| v \|_{H^1_{\eps}(\D)}^2 \qquad \mbox{for all } v\in H_u \S,
\end{align*}
hence \eqref{eq:secE-strong-coercive} with $\eta(\eps) \ge \mucrit \, (1- \tfrac{\lambda_{1}}{\lambda_{2}})$, where the $\eps$ dependency enters through $\lambda_1$ and $\lambda_2$ (and potentially through $\mucrit$). The upper bound in \eqref{bounds-etaeps} is a direct consequence of \eqref{continuity-constrained-Hessian}.
\end{proof}
Since our error analysis relies crucially on the inverse operator $(E''(u)-\lambda \mathcal{I})_{\vert H_u\S}^{-1}$, the following lemma establishes corresponding stability and regularity bounds. 
\begin{lemma}[Regularity of solutions to the constrained Hessian problem]
\label{H2-lemma}
Assume \ref{A1}-\ref{A3} and let $u \in \S$ be a quasi-isolated ground state with eigenvalue $\lambda$. Then for any $f \in L^2(\D)$, there exists a unique $z \in H^2(\D) \cap H_u \S$ with
\begin{equation}
\label{adjoint1-1}
 \langle(E''(u)-\lambda \mathcal{I}) z ,v \rangle = \langle \mathcal{I} f ,v\rangle \qquad \mbox{ for all } v \in H_u\S \, ,
\end{equation}
and such that
\begin{align}
\label{h2regularity-general-1}
\|  z \|_{H^{1}_{\eps}(\D)} \lesssim \tfrac{\eps}{\eta(\eps) } \| f \|_{L^2(\D)}
\qquad
\mbox{and}
\qquad
\|  D^2 z \|_{L^{2}(\D)} \lesssim  \tfrac{1}{\eta(\eps) }  \| f \|_{L^2(\D)}.
\end{align}
\end{lemma}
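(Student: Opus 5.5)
Existence and uniqueness will come from the Lax--Milgram lemma applied on the closed subspace $H_u\S\subset H^1_0(\D)$, equipped with $\|\cdot\|_{H^1_{\eps}(\D)}$. By Lemma~\ref{lem:continuity-coercivity-secE}, the bilinear form $\langle (E''(u)-\lambda\calI)\cdot,\cdot\rangle$ is continuous by \eqref{continuity-constrained-Hessian} and coercive on $H_u\S$ with constant $\eta(\eps)$ by \eqref{eq:secE-strong-coercive}. The right-hand side $v\mapsto (f,v)_{L^2(\D)}$ is bounded by $\|f\|_{L^2}\|v\|_{L^2}\le \eps\|f\|_{L^2}\|v\|_{H^1_\eps}$, since $\|v\|_{L^2}\le\eps\|v\|_{H^1_\eps}$ by \eqref{def-H1eps-norm}. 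Testing with $v=z$ gives
\[
\eta(\eps)\|z\|_{H^1_\eps}^2\le \eps\|f\|_{L^2}\|z\|_{H^1_\eps},
\]
which is the first bound in \eqref{h2regularity-general-1}. As a byproduct, $\|z\|_{L^2}\le \eps^2\eta(\eps)^{-1}\|f\|_{L^2}$ and $\|\nabla z\|_{L^2}\le \eps\,\eta(\eps)^{-1}\|f\|_{L^2}$.

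For $H^2$-regularity, I will first turn the variational problem on $H_u\S$ into one on all of $H^1_0(\D)$ by adding Lagrange multipliers. The orthogonal complement of $H_u\S$ with respect to $(\cdot,\cdot)_{L^2}$ is spanned by $u$ and $\ci u$, so for all $v\in H^1_0(\D)$
\[
\langle (E''(u)-\lambda\calI)z,v\rangle=(f,v)_{L^2}+\alpha\,(u,v)_{L^2}+\gamma\,(\ci u,v)_{L^2}
\]
for some real $\alpha,\gamma$. I will determine these by testing with $v=u$ and $v=\ci u$, using $(E''(u)-\lambda\calI)(\ci u)=0$ and self-adjointness. This gives $\gamma=-(f,\ci u)$ and $\alpha=\langle (E''(u)-\lambda\calI)z,u\rangle-(f,u)$. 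By \eqref{relation-secE-firstE} and $E'(u)=\lambda\calI u$, we have $(E''(u)-\lambda\calI)u=\tfrac{2\beta}{\eps^2}\calI(|u|^2u)$, so
\[
|\alpha|\le \tfrac{2\beta}{\eps^2}\|u\|_{L^\infty}^3\|z\|_{L^2}+\|f\|_{L^2}\lesssim \eta(\eps)^{-1}\|f\|_{L^2},
\]
using $\eta(\eps)\lesssim1$. Also $|\gamma|\le\|f\|_{L^2}$.

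Writing out \eqref{sd-new}, $z$ weakly solves $-\Delta z=g$ with
\[
g:=f+\alpha u+\gamma\ci u+\lambda z-\tfrac{\Veps}{\eps^2}z+\tfrac{\Omegaeps}{\eps}\mathcal L_3 z-\tfrac{\beta}{\eps^2}\bigl(|u|^2z+2\Re(u\bar z)u\bigr).
\]
By convexity of $\D$ (assumption \ref{A1}), $\|D^2z\|_{L^2}\lesssim\|g\|_{L^2}$. I will bound each term using \eqref{energy-lambda-gs-bounds}, Lemma~\ref{lem:stability-bounds-groundstates} and the bounds from the first paragraph. The zero-order terms contribute $\eps^{-2}\|z\|_{L^2}\lesssim\eta^{-1}\|f\|$. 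The rotation term contributes $\eps^{-1}\|\nabla z\|_{L^2}\lesssim\eta^{-1}\|f\|$. Together with the multiplier bounds, this gives $\|g\|_{L^2}\lesssim\eta(\eps)^{-1}\|f\|_{L^2}$.

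The main obstacle is the multiplier $\alpha$. A careless bound of $\langle E''(u)z,u\rangle$ via continuity would give $\eps^{-1}\|z\|_{H^1_\eps}\|u\|_{H^1_\eps}\sim\eps^{-1}\cdot\eps\eta^{-1}\cdot\eps^{-1}$, which is too large. The identity \eqref{relation-secE-firstE}, together with the $L^2$ bound on $z$ from the coercivity step, is what keeps this term at size $\eta^{-1}\|f\|_{L^2}$.
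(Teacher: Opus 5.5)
Your proof is correct and follows the paper's route. The $H^1_{\eps}$ bound is derived exactly as in the paper, and your Lagrange-multiplier reduction to $-\Delta z = g$ on the convex domain, with $g$ bounded term by term, is an explicit $\eps$-tracking version of the argument the paper only cites from \cite[Lemma 5.11]{HY25MathComp}. One correction to your closing remark: the continuity bound \eqref{continuity-constrained-Hessian} has an $\eps$-independent constant and $\|u\|_{H^1_{\eps}(\D)}\lesssim\eps^{-1}$, so it already gives $|\langle (E''(u)-\lambda\calI)z,u\rangle|\lesssim \eta(\eps)^{-1}\|f\|_{L^2(\D)}$; the extra factor $\eps^{-1}$ in your ``careless'' bound is not there, and \eqref{relation-secE-firstE} is convenient here but not essential.
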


\begin{proof}
Existence of $z \in  H_u \S$ follows from the coercivity of $E''(u)-\lambda \mathcal{I}$ on the horizontal space $H_u\S$. This also implies 
\begin{align*}
\eta(\eps) \| z \|_{H^1_{\eps}(\D)}^2 \,\,\le\,\,   \langle(E''(u)-\lambda \mathcal{I}) z ,z \rangle \,\,=\,\, \langle \mathcal{I} f ,z \rangle 
\,\,\le\,\, \eps \, \|f \|_{L^2(\D)}  \| z \|_{H^1_{\eps}(\D)}.
\end{align*}
The $H^2$-regularity of $z$ is proved in \cite[Lemma 5.11]{HY25MathComp}. By following the arguments in \cite{HY25MathComp} carefully and by exploiting the stability bounds for $u$ in \ref{lem:stability-bounds-groundstates}, the precise $\eps$-dependency can be extracted to show that $\|  D^2 z \|_{L^{2}(\D)} \lesssim  \tfrac{1}{\eta(\eps) }  \| f \|_{L^2(\D)}$.
\end{proof}
Finally, we conclude this section with a useful continuity estimate for $(E^{\prime\prime}(u) - \lambda \calI) u$.
\begin{lemma}
\label{lem:est-secE-firstE}
Assume \ref{A1}-\ref{A3} and let $u \in \S$ be a ground state with eigenvalue $\lambda$. Then it holds 
\begin{align*}
\left| \langle E''(u)u -\lambda \calI u ,v \rangle \right| \,=\,   2\, \tfrac{\beta}{\eps^2} \, \left| ( |u|^2 \, u,v)_{L^2(\D)} \right|
\,\lesssim\, \tfrac{1}{\eps}  \| u \|_{L^2(\D)}   \,  \tfrac{1}{\eps}  \| v \|_{L^2(\D)}   \, 
\,\lesssim\,  \| u \|_{H^1_{\eps}(\D)} \| v \|_{H^1_{\eps}(\D)}.
\end{align*}
for all $v\in H^1_0(\D)$.
\end{lemma}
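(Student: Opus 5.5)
The plan is to compute $E''(u)u$ from the identity \eqref{relation-secE-firstE} and then cancel the first-order part using the Euler--Lagrange equation. By \eqref{relation-secE-firstE},
\begin{align*}
\langle E''(u)u,v\rangle = \langle E'(u),v\rangle + \tfrac{2\beta}{\eps^2}(|u|^2u,v)_{L^2(\D)} .
\end{align*}
Since $u$ is a ground state, \eqref{GPE-abstract} gives $\langle E'(u),v\rangle = \lambda (u,v)_{L^2(\D)} = \langle \lambda\calI u, v\rangle$ for all $v\in H^1_0(\D)$. Subtracting $\lambda\calI u$ leaves exactly $\tfrac{2\beta}{\eps^2}(|u|^2u,v)_{L^2(\D)}$, and taking absolute values yields the claimed identity.

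For the first inequality, I would estimate
\begin{align*}
\left|(|u|^2u,v)_{L^2(\D)}\right| \le \|u\|_{L^\infty(\D)}^2\,\|u\|_{L^2(\D)}\,\|v\|_{L^2(\D)} .
\end{align*}
By Lemma~\ref{lem:stability-bounds-groundstates}, $\|u\|_{L^\infty(\D)}\lesssim 1$, and by \ref{A2}, $\beta\simeq 1$. Together these give $\tfrac{2\beta}{\eps^2}|(|u|^2u,v)_{L^2(\D)}| \lesssim \eps^{-1}\|u\|_{L^2(\D)}\,\eps^{-1}\|v\|_{L^2(\D)}$.

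The final inequality follows directly from the definition \eqref{def-H1eps-norm}, which gives $\eps^{-1}\|w\|_{L^2(\D)}\le \|w\|_{H^1_\eps(\D)}$ for every $w\in H^1_0(\D)$.

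There is no real obstacle in this argument. The only nontrivial ingredient is the $\eps$-independent $L^\infty$-bound for $u$, which we take from Lemma~\ref{lem:stability-bounds-groundstates}. One small point needs care: the Euler--Lagrange relation holds for all $v\in H^1_0(\D)$, not only for tangential $v$. This is what makes the identity valid on the whole space.
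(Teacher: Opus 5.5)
Your proof is correct and takes the same approach as the paper. The paper's one-line proof combines \eqref{relation-secE-firstE} with $E'(u)=\lambda\calI u$ on all of $H^1_0(\D)$, and leaves implicit the two remaining steps you spelled out correctly: the $\eps$-independent bound $\|u\|_{L^\infty(\D)}\lesssim 1$ from Lemma~\ref{lem:stability-bounds-groundstates}, and $\eps^{-1}\|w\|_{L^2(\D)}\le\|w\|_{H^1_\eps(\D)}$.
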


\begin{proof}
Using $E^{\prime}(u) = \lambda \calI u$, the result follows with \eqref{relation-secE-firstE}.
\end{proof}

\section{Mesh size conditions for vortex resolution}
\label{sec:main-results}
In this section we present our main result on $\eps$-dependent mesh constraints that ensure that discrete minimizers in a $\mathbb{P}^1$ finite element space are reasonable approximations of an exact ground state.

To introduce the precise setting, we consider a shape regular family $\{ \mathcal{T}_h\}$  of conforming triangulations of $\D$, where $h>0$ denotes the mesh size, i.e. the largest diameter of an element of $ \mathcal{T}_h$. On each mesh, the corresponding $\mathbb{P}^1$-Lagrange finite element space is given by
$$  
 V_{h} = \{ v \in H^{1}_{0}(\D) \cap C^{0}(\overline{\mathcal{D}}) \vert \,\, v|_{K} \in \mathbb{P}^{1}(K) \hspace{1mm}\mbox{ for all } K \in \mathcal{T}_{h}\}.
$$
A discrete ground state is defined as a global minimizer $u_h \in V_h \cap \mathbb{S}$ with 
\begin{align}
\label{weak_min_problem}
 E(u_{h}) = \min_{v \in V_h \cap \mathbb{S} } E(v). 
\end{align}
In general, we call $u_h \in V_h \cap \mathbb{S}$ a discrete \emph{local} minimizer if there exists a neighborhood $\mathcal{U}_{\mathbb{S}} \subset \S$ such that 
\begin{align}
\label{weak_min_problem_loc}
 E(u_{h}) = \min_{v \in V_h \cap \mathcal{U}_{\mathbb{S}} } E(v). 
\end{align}
Any local minimizer fulfills the discrete first-order condition, i.e., there exists an eigenvalue (Lagrange multiplier) $\lambda_h \in \R$ such that
\begin{align*}
\langle E^{\prime}(u_h) , v_h \rangle = \lambda_h \, (u_h , v_h)_{L^2(\D)}
\qquad \mbox{for all } v_h \in V_h.
\end{align*}
The corresponding sufficient second-order condition becomes
\begin{align*}
\langle (E^{\prime\prime}(u_h) -\lambda_h \mathcal{I}) v_h , v_h \rangle > 0
\qquad \mbox{for all } v_h \in V_h \cap \tangentspace{u}.
\end{align*}
With this, we are interested in the question: \emph{How fine do we need to select the mesh size $h$ relative to $\eps$ such that $u$ is approximated by some discrete minimizer $u_h$ with optimal order in $h$ and $\eps$?} In the considered Thomas-Fermi scaling, the characteristic vortex-core (healing) length is expected to be of order $\eps$ in the bulk region, cf. \cite{Aftalion2006,AftDu2001,IgnatMillot2006}. Consequently, a natural mesh-resolution requirement is $h \lesssim \eps$, so that the numerical grid is fine enough to resolve the vortex cores. In fact, this condition is also plausible in the light of the stability estimates in Lemma \ref{lem:stability-bounds-groundstates}. Using $\| u \|_{H^2(\D)} \lesssim \eps^{-2}$ in combination with the standard interpolation estimates in $V_h$, we obtain for any ground state $u$ that
\begin{align*}
\inf_{v_h \in V_h} \| u - v_h \|_{L^2(\D)} \lesssim h^2 \| u \|_{H^2(\D)} \lesssim \left(\tfrac{h}{\eps}\right)^2 
\qquad
\mbox{and}
\qquad 
\inf_{v_h \in V_h} \eps \, \| \nabla u - \nabla v_h \|_{L^2(\D)} \lesssim \tfrac{h}{\eps},
\end{align*}
where the $H^1$-error is scaled with $\eps$ since $\| \nabla u \|_{L^2(\D)} \lesssim \eps^{-1}$ according to Lemma \ref{lem:stability-bounds-groundstates}. Hence, we interpret $\eps \, \| \nabla u - \nabla v_h \|_{L^2(\D)}$ as a relative error.

As we will see from our main result (and later confirmed in the numerical experiments), the condition $h \lesssim \eps$ is not sufficient to guarantee the existence of a meaningful discrete minimizer in the neighborhood of each ground state. In fact, the local flatness of $E$ in the neighborhood of a ground state, measured by the degenerate coercivity constant $\eta(\eps)$ in \eqref{eq:secE-strong-coercive}, plays a crucial role and enforces a much stronger condition on the mesh size. Our main theorem reads a follows.
\begin{theorem}[Approximation properties of discrete minimizers]
\label{theorem-main-result}\quad\\
Let \ref{A1}-\ref{A3} hold and and let all constants be defined as in Lemma \ref{lem:continuity-coercivity-secE}. If $h\lesssim \eps$, then
\begin{align*}
\min_{u_h \in V_h \cap \S } E(u_h) \, - \, \min_{u \in \S} E(u)
\,\,\,\,\lesssim\,\,\,\, \left( \tfrac{h}{\eps^2} \right)^2.
\end{align*}
Furthermore, if $u \in \S$ is a quasi-isolated ground state with ground state eigenvalue $\lambda$, then there exists a constant $c^{\ast}>0$ (independent of $h$ and $\eps$) such if the mesh size fulfills 
$$h \,\,\le\,\,  c^{\ast}\, \mucrit \, (1- \tfrac{\lambda_{1}}{\lambda_{2}})\,\eps^{(d+2)/2}$$
there is a local discrete minimizer pair $(\lambda_h ,u_h) \in \R \times (V_h \cap T_{\ci u} \S)$ 
which satisfies the discrete first- and second-order conditions
\begin{align*}
\langle E^{\prime}(u_h) , v_h \rangle \,\, = \,\, \lambda_h \, (u_h , v_h)_{L^2(\D)} \qquad \mbox{for all } v_h \in V_h
\end{align*}
and
\begin{align*}
\langle (E^{\prime\prime}(u_h) - \lambda_h \mathcal{I}) v_h , v_h \rangle \,\,\, \gtrsim \,\,\, \eta(\eps) \, \| v_h \|_{H^1_{\eps}(\D)}^2 \qquad \mbox{for all } v_h \in V_h \cap H_{u} \S,
\end{align*}
and that is a quasi-best approximation to $u$ in the sense that
\begin{align*}
\| u - u_h \|_{H^1_{\eps}(\D)} \,\,\,\lesssim \,\,\,  \inf_{v_h \in V_h} \| u - v_h \|_{H^1_{\eps}(\D)}.
\end{align*}
In particular, we have the asymptotic optimal estimate
\begin{align*}
\| \nabla u - \nabla u_h \|_{L^2(\D)} 
\,\,\,\lesssim \,\,\, \tfrac{h}{\eps^2}.
\end{align*}
\end{theorem}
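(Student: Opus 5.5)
The energy estimate is handled first, because it needs no quasi-isolation. Fix a ground state $u$ and let $v_h := I_h u / \|I_h u\|_{L^2(\D)} \in V_h \cap \S$, where $I_h$ is the nodal interpolant or $H^1_0$-Ritz projection.

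The interpolation bounds after Lemma~\ref{lem:stability-bounds-groundstates} give $\|u-I_h u\|_{H^1_\eps(\D)} \lesssim h/\eps^2$. The normalization only adds an $L^2$-error of order $(h/\eps)^2$, so with $h\lesssim\eps$ we still have $\|u-v_h\|_{H^1_\eps}\lesssim h/\eps^2$. We then Taylor-expand $E$ around $u$ on the sphere. Since $E'(u)=\lambda\calI u$ and $\|v_h\|_{L^2}=\|u\|_{L^2}=1$, the first-order term is
\begin{align*}
\langle E'(u), v_h - u\rangle = \lambda\,(u, v_h - u)_{L^2(\D)} = -\tfrac{\lambda}{2}\|v_h-u\|_{L^2(\D)}^2 .
\end{align*}
This is quadratic in the error. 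The remaining terms are controlled by the continuity of $E''$ and of the third and fourth derivatives in the $H^1_\eps$-norm, using $\|u\|_{L^\infty}\lesssim1$ and $\lambda\lesssim\eps^{-2}$. The higher derivatives carry factors $\eps^{-2}\|\cdot\|_{L^4}$, which we bound by Gagliardo--Nirenberg together with $h\lesssim\eps$. Altogether $E(v_h)-E(u)\lesssim \|u-v_h\|_{H^1_\eps}^2\lesssim (h/\eps^2)^2$.

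For the local result we follow a Pousin--Rappaz fixed-point argument. We look for $u_h$ in the affine discrete manifold $V_h\cap T_{\ci u}\S$, which fixes the phase, and solve the discrete Euler--Lagrange system $F_h(u_h,\lambda_h)=0$. Here $F_h(w,\mu) := \big(E'(w)-\mu\calI w,\ \tfrac12(\|w\|^2_{L^2}-1)\big)$ is tested with $V_h$ and with $(\ci u,\cdot)=0$.

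\emph{Step 1: the linearization.} The linearization at $(u,\lambda)$ is the constrained Hessian $E''(u)-\lambda\calI$ acting on $H_u\S$. Lemma~\ref{lem:continuity-coercivity-secE} shows it is coercive with constant $\eta(\eps)$ on $H_u\S$.

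\emph{Step 2: a modified Ritz projection.} We define a Ritz-type projection $P_h$ for this operator. Since $u\notin H_u\S$, we first define the projection with the shifted form $\langle (E''(u)-\lambda\calI)\cdot,\cdot\rangle + \gamma (u,\cdot)(u,\cdot) + \gamma(\ci u,\cdot)(\ci u,\cdot)$ with $\gamma\simeq\eps^{-2}$. Lemma~\ref{lem:est-secE-firstE} keeps this form continuous. We then check that it reduces to the Galerkin projection on the horizontal part. This gives quasi-optimality $\|u-P_hu\|_{H^1_\eps}\lesssim \eta(\eps)^{-1}\inf_{v_h}\|u-v_h\|_{H^1_\eps}$, and a discrete inf-sup constant $\gtrsim\eta(\eps)$ on $V_h\cap H_u\S$. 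The discrete coercivity comes from a G{\aa}rding/Schatz argument: the $L^2$ part of \eqref{eq:secE-weak-coercive} is absorbed via the Aubin--Nitsche duality bound of Lemma~\ref{H2-lemma}, which gives
\begin{align*}
\|e\|_{L^2(\D)}\lesssim \tfrac{h}{\eta(\eps)}\|e\|_{H^1_\eps(\D)},
\end{align*}
so this step needs $h\lesssim\eps\,\eta(\eps)$.

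\emph{Step 3: the Newton-type contraction.} Define $\Phi_h(w)=w-DF_h(P_hu)^{-1}F_h(w)$ on an $H^1_\eps$-ball around $P_hu$. Two quantities must be balanced:
\begin{itemize}
\item[(i)] the residual $\|F_h(P_hu)\|$, which is of size $\eps^{-2}$ times the $L^2$/$L^4$ errors of $P_hu$, hence about $h/\eps^2$ up to stability factors;
\item[(ii)] the Lipschitz constant of $DF_h$, which comes from the cubic term $\eps^{-2}\big(|w|^2w-|u|^2u\big)$.
\end{itemize}
For (ii) we bound $\|e\|_{L^4}\|v\|_{L^4}$ by discrete inverse/Sobolev inequalities in $L^\infty$ or $L^6$. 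This produces a loss of $\eps^{-d/2}$ when converting to $H^1_\eps$.

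\textbf{The main obstacle} is this bookkeeping. The radius must be $\lesssim\eta(\eps)\cdot(\text{Lipschitz})^{-1}$, while the residual is $\lesssim h\,\eps^{-(d+2)/2}$-type. Matching the two should produce exactly $h\le c^\ast\eta(\eps)\eps^{(d+2)/2}$, after which we insert the lower bound on $\eta(\eps)$ from \eqref{bounds-etaeps}. If the constants fail to match, the first thing I would adjust is the choice of norms in the cubic estimate: for example, use $\|u\|_{L^\infty}\lesssim1$ together with $W^{1,4}$-bounds for the nonlinearity instead of inverse estimates.

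\emph{Step 4: the conclusions.} Brouwer/Banach then yields a unique fixed point $(u_h,\lambda_h)$ in the ball. Quasi-best approximation follows from the triangle inequality with $P_hu$, since the fixed-point ball radius is dominated by the best-approximation error. The discrete second-order condition follows by perturbation: $\|E''(u_h)-E''(u)\|$ and $|\lambda_h-\lambda|$ are $o(\eta(\eps))$ under the mesh condition, so the coercivity \eqref{eq:secE-strong-coercive} transfers to $u_h$. This also makes $u_h$ a strict local minimizer. Finally, the interpolation estimate gives $\|\nabla(u-u_h)\|_{L^2}\lesssim h\|D^2u\|_{L^2}\lesssim h/\eps^2$.
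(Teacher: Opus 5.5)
The gap is in Steps 3--4, exactly where the resolution condition and the $\eps$-uniform quasi-optimality have to come out.

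In a Kantorovich argument centred at $P_hu$ with $\|DF_h(P_hu)^{-1}\|\simeq\eta(\eps)^{-1}$, all you know about the fixed point is $\|u_h-P_hu\|_{H^1_{\eps}(\D)}\lesssim\eta(\eps)^{-1}\|F_h(P_hu)\|$. The Kantorovich condition reads $\eta(\eps)^{-2}L\,\|F_h(P_hu)\|\lesssim 1$. You estimate the residual as first order (``about $h/\eps^2$'', ``$h\,\eps^{-(d+2)/2}$-type''). With such a residual you only get $\|u-u_h\|_{H^1_{\eps}(\D)}\lesssim\eta(\eps)^{-1}h/\eps^2$, under a mesh condition involving $\eta(\eps)^2$. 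So none of the following follows: the stated condition $h\le c^{\ast}\mucrit(1-\lambda_1/\lambda_2)\eps^{(d+2)/2}$, quasi-optimality with an $\eps$-independent constant, or $\|\nabla(u-u_h)\|_{L^2(\D)}\lesssim h/\eps^2$. Your sentence ``the fixed-point ball radius is dominated by the best-approximation error'' is precisely the statement that needs proof, and nothing in the proposal proves it.

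Your Ritz bound with the extra factor $\eta(\eps)^{-1}$ makes this worse. The G{\aa}rding--duality argument you describe, run as a Schatz argument, actually gives the $\eta$-free bound of Lemma \ref{lem:H1-ritz-proj-est}. Coercivity on $V_h\cap H_u\S$, on the other hand, needs no argument at all: it is inherited from $H_u\S$ by conformity.

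The paper closes the gap in two separate steps. First, it centres the fixed-point map at $(\lambda,u)$ and uses $\calJ_h'(\lambda,u)^{-1}\calJ_h(\lambda,u)=(0,u-P_hu)$, see \eqref{projection-error-rep}, so $\eta(\eps)^{-1}$ is never applied to the consistency error. This yields existence under $h\lesssim\eta(\eps)\eps^{(d+2)/2}M$, but only with a coarse error bound. Second, $\eps$-uniform quasi-optimality comes from a separate defect estimate $\|u_h-P_hu\|_{H^1_{\eps}(\D)}\lesssim M\|u-u_h\|_{H^1_{\eps}(\D)}$ (Lemma \ref{lem:defect-est}). That estimate rests on the eigenvalue identity and a duality argument for $\eps^{-2}(|u|^2u,u_h-u)_{L^2(\D)}$.

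If you want to keep your Newton-at-$P_hu$ design, the missing idea is that the residual must be \emph{quadratic}. This holds if $P_h$ has both properties of Lemma \ref{lem:def-ritz-proj}: $u-P_hu\in H_u\S$, and Galerkin orthogonality on $V_h\cap T_{\ci u}\S$. Then the linear term of $\langle E'(P_hu)-\lambda\calI P_hu,w_h\rangle$ cancels, and the constraint residual has size $\tfrac12\|u-P_hu\|_{L^2(\D)}^2$. Hence $\|F_h(\lambda,P_hu)\|\lesssim\eps^{1-d/2}\|u-P_hu\|_{H^1_{\eps}(\D)}^2$ up to higher-order terms. The requirement $\eta(\eps)^{-1}\eps^{1-d/2}\|u-P_hu\|_{H^1_{\eps}(\D)}\lesssim 1$ then amounts to $h\lesssim\eta(\eps)\eps^{(d+2)/2}$, and it gives both the Kantorovich condition and $\|u_h-P_hu\|_{H^1_{\eps}(\D)}\lesssim\|u-P_hu\|_{H^1_{\eps}(\D)}$. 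You would still have to show that the discrete bordered Jacobian at $(\lambda,P_hu)$ has an inverse bounded by $\eta(\eps)^{-1}$.

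Your shifted-form projection lacks these properties. In general $u-P_hu$ is $L^2$-orthogonal to neither $u$ nor $\ci u$. Consequently the normalization residual contains the linear term $(u,u-P_hu)_{L^2(\D)}$, which is amplified by $\eta(\eps)^{-1}$, and $P_hu$ need not even lie in $T_{\ci u}\S$.

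A smaller point concerns the energy estimate. Gagliardo--Nirenberg in terms of $\|e\|_{H^1_{\eps}(\D)}$ gives $\eps^{-2}\|e\|_{L^4(\D)}^4\lesssim\eps^{2-d}\|e\|_{H^1_{\eps}(\D)}^4$, which is $\lesssim(h/\eps^2)^2$ only for $h\lesssim\eps^{(d+2)/2}$. Under $h\lesssim\eps$ you instead need $\|u-I_hu\|_{L^4(\D)}\lesssim h\|\nabla u\|_{L^4(\D)}\lesssim h/\eps$, from the $W^{1,4}$-bound in Lemma \ref{lem:stability-bounds-groundstates}, as in Conclusion \ref{conclusion:energy-error}.
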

Theorem \ref{theorem-main-result} is a direct consequence of Theorem \ref{thm:main-H1-est} and Conclusion \ref{conclusion:energy-error}, which we prove later in Section \ref{sec:error-analysis}, together with the inequality $\mucrit \, (1- \tfrac{\lambda_{1}}{\lambda_{2}}) \,\, \le \,\, \eta(\eps)$ from Lemma \ref{lem:continuity-coercivity-secE}.

Our main result predicts that the absolute $H^1$-error behaves asymptotically like $\tfrac{h}{\eps^2}$ and accordingly the scaled (\quotes{relative}) error like $\tfrac{h}{\eps}$. Both rates are asymptotically sharp and coincide with the rates for the best-approximation. However, to achieve these rates, a pre-asymptotic regime needs to be resolved subject to the resolution condition $h \lesssim  \mucrit \, (1- \tfrac{\lambda_{1}}{\lambda_{2}})\,\eps^{(d+2)/2}$, which is significantly stronger than the natural condition $h\lesssim \eps$.

For a better understanding of the resolution condition, recall that $\mucrit$ measures how close we are to the critical frequency and that $\lambda_{1}$ and $\lambda_2$ denote the two smallest eigenvalues of the tangent space hessian $E^{\prime\prime}(u)\vert_{\tangentspace{u}}$. In the fast rotation regime (small $\eps$) with many vortices of core size of order $\eps$, but away from the critical velocity (i.e. $\mucrit=O(1)$, cf. Remark \ref{remark-critical-velocity-regime}), the resolution condition effectively becomes
\begin{align*}
h \,\, \lesssim \,\, \eps^{(d+2)/2} \, \left(1- \tfrac{\lambda_{1}}{\lambda_{2}}\right).
\end{align*}
Consequently, in addition to the core size $\eps$, the mesh size has to compensate for a small first spectral gap in $E^{\prime\prime}(u)\vert_{\tangentspace{u}}$. This corresponds to a small minimal eigenvalue of the Riemannian Hessian $(E^{\prime\prime}(u) - \lambda \mathcal{I})\vert_{H_u \mathbb{S}}$ on the horizontal space. Hence, geometrically, a small spectral gap implies that the energy landscape around the ground state $u$ exhibits \quotes{flat} directions in the energy surface, i.e., directions in which the curvature of the energy is very small. Moving along such directions produces comparatively large variations of the state while only mildly affecting the energy level. 
As a consequence, discretization errors may shift the discrete minimizer along these nearly flat directions, which explains
why a substantially finer mesh is required to keep the numerical solution close to the continuous ground state. In the fast rotation regime close to the critical velocity, the effect is further amplified since $\mucrit$ (defined in \eqref{hat-lambda-min}) may now scale like a small power of $\eps$ and thus further reduces the admissible mesh size.

\section{Error analysis}
\label{sec:error-analysis}

The proof of Theorem \ref{theorem-main-result} takes place in several steps. First, we introduce a suitable Ritz-projection $P_h$ based on the Riemannian Hessian $(E''(u)-\lambda \calI)_{\vert T_{\ci u} \S}$ in Section \ref{subsec:ritz-estimates} and analyze the projection error. In Section \ref{subsec:loc-existance} we establish the existence of a discrete minimizer $u_h$ in the neighborhood of each ground state $u$. The defect $u_h - P_h u$ is estimated in Section \ref{subsec:defect-estimates}. Finally, all results are combined in Section \ref{subsec:combination-estimates} to establish  Theorem \ref{theorem-main-result}.

\subsection{Ritz-projection estimates}
\label{subsec:ritz-estimates}
We start with introducing a Ritz-projection $P_h : T_{\ci u} \S \rightarrow V_h \cap T_{\ci u} \S$ based on the constrained Hessian $E''(u)-\lambda \calI$. Note that we only have coercivity of $E''(u)-\lambda \calI$ on the horizontal space $H_{u}\S$ and that the existence of a Ritz-projection on $T_{\ci u} \S$ is therefore not trivial.
\begin{lemma}[Existence of Ritz-projection]
\label{lem:def-ritz-proj} 
\quad\\
Assume \ref{A1}-\ref{A3} and let $u \in \S$ denote a quasi-isolated ground state with eigenvalue $\lambda$. Then, there exists a projection operator  $P_h : T_{\ci u} \S \rightarrow V_h \cap T_{\ci u} \S$ such that
\begin{align*}
v - P_h v \in T_{u} \S \qquad \mbox{for all } v\in T_{\ci u} \S
\end{align*}
and
 \begin{align}
 \label{galerkin-orth}
\langle  (E''(u)-\lambda \calI) (v-P_{h}v) , w_h \rangle = 0 \quad \mbox{for all } w_h \in V_h \cap \tangentspace{\ci u}.
\end{align}
\end{lemma}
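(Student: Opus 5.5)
Write $A := E''(u)-\lambda\calI$. The main difficulty is that $A$ is coercive only on $H_u\S$, not on $T_{\ci u}\S$; on $T_{\ci u}\S$ the direction $u$ is not controlled. We use the splitting $T_{\ci u}\S = \mathrm{span}\{u\} \oplus H_u\S$, writing $v = (v,u)_{L^2(\D)}\,u + v_0$ with $v_0\in H_u\S$. By Lemma \ref{lem:est-secE-firstE}, $Au = 2\tfrac{\beta}{\eps^2}\calI(|u|^2u)$, which is not zero. So we cannot simply project $u$ to zero. Instead we build $P_h$ from a modified Galerkin problem on a discrete horizontal space, and then show it satisfies the required orthogonality.

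\emph{Construction.} Let $V_h^0 := V_h\cap H_u\S$ and $V_h^{\ci u} := V_h\cap T_{\ci u}\S$. For $v\in T_{\ci u}\S$ we look for $P_h v\in V_h^{\ci u}$ with two properties: $(v-P_hv,u)_{L^2(\D)}=0$, and $\langle A(v-P_hv),w_h\rangle=0$ for all $w_h\in V_h^{\ci u}$. We make the ansatz $P_hv = \alpha\, q_h + z_h$ with $z_h\in V_h^0$. Here $q_h\in V_h^{\ci u}$ is a fixed discrete function with $(q_h,u)_{L^2(\D)}=1$. A natural choice is the $L^2$-projection $\pi_h u$ of $u$ onto $V_h$, corrected by a multiple of $\pi_h(\ci u)$ so that it lies in $T_{\ci u}\S$ and is nondegenerate for $h\lesssim\eps$. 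The constraint fixes $\alpha=(v,u)_{L^2(\D)}$.

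\emph{Solving for $z_h$.} The remaining unknown $z_h$ is determined by the Galerkin problem
\[
\langle A z_h, w_h\rangle = \langle A(v-\alpha q_h), w_h\rangle \qquad \text{for all } w_h\in V_h^0.
\]
This problem is uniquely solvable because $A$ is coercive on $H_u\S\supset V_h^0$ by \eqref{eq:secE-strong-coercive} (Lax--Milgram in finite dimensions). It gives the orthogonality only on $V_h^0$, which has codimension (at most) one in $V_h^{\ci u}$, the complement being spanned by $q_h$. To also test with $q_h$, we treat the full $2\times2$-block system in the unknowns $(\alpha,z_h)$ with the multiplier structure. Equivalently, we solve a saddle-point problem on $V_h^{\ci u}$ with a scalar Lagrange multiplier $\mu_h$ for the constraint $(\,\cdot\,,u)_{L^2(\D)}$:
\[
\langle A P_hv, w_h\rangle + \mu_h (u,w_h)_{L^2(\D)} = \langle Av,w_h\rangle \qquad \text{for all } w_h\in V_h^{\ci u}, \qquad (P_hv,u)_{L^2(\D)}=(v,u)_{L^2(\D)}.
\]
Well-posedness follows from Brezzi theory. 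Coercivity on the kernel $V_h^0$ is given by \eqref{eq:secE-strong-coercive}, and the inf-sup condition for the scalar constraint holds via $q_h$.

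\emph{Main obstacle.} The key step, which I expect to be hardest, is to show that $\mu_h=0$, so that \eqref{galerkin-orth} holds exactly. My first attempt is to test the saddle-point equation with a discrete function related to $u$, and to use that $u$ solves $E'(u)=\lambda\calI u$ together with the structure $Au = \calI(2\beta\eps^{-2}|u|^2u)$. If the exact identity fails, the fallback is to redefine the projection on $\mathrm{span}\{u\}$ separately. Concretely, set $P_hu := R_hu$, where $R_hu$ is chosen so that $\langle A(u-R_hu),w_h\rangle=0$ on $V_h^{\ci u}$. This is a one-dimensional-perturbed coercive problem, and it is uniquely solvable because the added rank-one term does not destroy invertibility. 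We then extend $P_h$ linearly. Finally, the projection property $P_h w_h = w_h$ for $w_h\in V_h^{\ci u}$ follows from uniqueness, since $w_h$ itself satisfies both conditions.
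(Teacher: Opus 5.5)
\textbf{There is a genuine gap: the step you flag as the main obstacle ($\mu_h=0$) is never proved, and it is in fact false.}

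Your saddle-point problem is, up to notation, the object the paper constructs. Its Galerkin projection $\mathbf{P}_h=(P_{h,1},P_{h,2})$ with respect to $\calJ'(\lambda,u)$ gives $v-P_{h,2}v\in T_u\S$ and $\langle A(v-P_{h,2}v),w_h\rangle=(\sigma-P_{h,1}\sigma)\,(u,w_h)_{L^2(\D)}$ for all $w_h\in V_h\cap T_{\ci u}\S$. So $\sigma-P_{h,1}\sigma$ is your $\mu_h$, and unique solvability of this discrete problem is fine. The fallback does not close the gap. Defining $P_hu:=R_hu$ through $\langle A(u-R_hu),w_h\rangle=0$ on $V_h\cap T_{\ci u}\S$ gives no reason for $(u,u-R_hu)_{L^2(\D)}=0$, so the first required property already fails for $v=u$. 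The invertibility of $A$ on $V_h\cap T_{\ci u}\S$ is also only asserted, since $A$ is coercive merely on a codimension-one subspace. No linear extension can then restore both properties at once.

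The step cannot be completed. With $N:=\dim_{\R}(V_h\cap T_{\ci u}\S)$, the two requirements are $N+1$ linear conditions on the $N$-dimensional unknown $P_hv$, and the system is inconsistent for some $v$. Carrying out your ``first attempt'' with the right test function shows this. Let $(y_h,\kappa_h)\in(V_h\cap T_{\ci u}\S)\times\R$ solve $(u,y_h)_{L^2(\D)}=1$ and $\langle Ay_h,x_h\rangle=\kappa_h\,(u,x_h)_{L^2(\D)}$ for all $x_h\in V_h\cap T_{\ci u}\S$. This is uniquely solvable by the same argument whenever $V_h\cap T_{\ci u}\S\not\subset T_u\S$, which is needed anyway to meet your constraint for $v=u$. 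Testing your saddle-point equation with $w_h=y_h$, and using the constraint and the symmetry of $A$, gives
\[
\mu_h(v)\,=\,\langle Ay_h,v\rangle-\kappa_h\,(u,v)_{L^2(\D)}\qquad\text{for all } v\in T_{\ci u}\S .
\]
So $\mu_h\equiv 0$ would require $Ay_h\in\mathrm{span}\{\calI u,\calI(\ci u)\}$ in $H^{-1}(\D)$. Since $Ay_h=-\Delta y_h+g$ with $g\in L^2(\D)$ (rotation, potential and cubic terms), this forces $-\Delta y_h\in L^2(\D)$. For a continuous piecewise affine function this means all normal-derivative jumps across interior faces vanish, so $y_h$ is affine and hence $y_h=0$, contradicting $(u,y_h)_{L^2(\D)}=1$. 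Thus on every such mesh there exist $v\in T_{\ci u}\S$ for which no $P_hv$ with both properties exists.

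The paper's proof passes over the same point. It concludes $\sigma-P_{h,1}\sigma=0$ from ``linearity \ldots\ the problem decouples''. Linearity only gives $\mathbf{P}_h(\sigma,v)=(\sigma,0)+\mathbf{P}_h(0,v)$, and the first component of $\mathbf{P}_h(0,v)$ is $-\mu_h(v)$, which by the argument above cannot vanish for all $v$.

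What your construction does establish is a weaker statement: $v-P_hv\in T_u\S$, with Galerkin orthogonality only against $V_h\cap H_u\S$. Equivalently, orthogonality holds on $V_h\cap T_{\ci u}\S$ up to the rank-one term $\mu_h(v)\,(u,w_h)_{L^2(\D)}$. Any later argument that uses the full orthogonality would have to carry this term.
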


\begin{proof}
Consider the Lagrange functional $\calJ : \mathbb{R} \times \tangentspace{\ci u} \rightarrow (\mathbb{R} \times \tangentspace{\ci u})^*$ given by
 \begin{align*}
\langle \calJ (\sigma , v) ,(\tau , w) \rangle := \langle  E'(v)-\sigma \calI v , w \rangle + \frac{\tau}{2}(1- \int_{\D} |v|^2 ) ,  
\end{align*} 
for $ (\sigma , v)$, $(\tau ,w ) \in \R  \times T_{\ci u}\S$, equipped with the norm $|||(\sigma , v)|||:= |\sigma| + \|v\|_{H^1(\D)}$. For $(\lambda,u) \in  \R  \times T_{\ci u}\S$ we have 
\begin{align} 
\label{new-eigenvalue-problem}	
 \langle \calJ (\lambda, u) , (\tau , w) \rangle =0 \qquad \mbox{ for all } (\tau , w) \in \mathbb{R} \times T_{\ci u}\S
 \end{align}
 and the derivative  $\calJ'(\lambda ,u) :\R  \times T_{\ci u}\S \rightarrow \R  \times T_{\ci u}\S$ at $(\lambda , u)$ is given by
\begin{align}
\label{j-derivative}
\langle \calJ' (\lambda , u) (\sigma , v) , (\tau , w) \rangle \,\,=\,\, \langle  (E''(u)-\lambda \calI)v , w \rangle - \sigma \langle \calI u,w   \rangle - \tau \langle \calI u,v \rangle \, 
\end{align}
for $(\sigma , v),(\tau ,w) \in \R \times T_{\ci u}\S$. Now consider $(\mu,z) \in \R \times T_{\ci u}\S$ in the kernel of $\calJ' (\lambda , u)$, then it holds $\langle \calJ' (\lambda , u) (\mu,z) , (1,0) \rangle = 0$, which implies, by \eqref{j-derivative}, that $\langle \calI u,z \rangle =0$, hence $z \in   T_{u}\S \cap T_{\ci u}\S= H_{u}\S$. We obtain 
\begin{align*}
0 = \langle \calJ' (\lambda , u) (\mu , z) , (\mu , z) \rangle \overset{z\in T_{u}\S}{=}  \langle  (E''(u)-\lambda \calI)z , z \rangle.
\end{align*}
By the quasi-isolation of $u$ we know that $ \langle  (E''(u)-\lambda \calI)v , v \rangle>0$ for all $v\in H_u \mathbb{S} \setminus \{ 0 \}$. We conclude that $z=0$ and consequently also $\mu=0$. Hence,  $\calJ' (\lambda , u) $ has a trivial kernel on $\R \times T_{\ci u}\S$. Since $\calJ'(\lambda,u)$ is self-adjoint and bounded, its range is closed in our setting and
$\mathrm{Ran}(\calJ(\lambda,u)') = (\ker \calJ'(\lambda,u))^\perp = \R \times T_{\ci u}\S$. We conclude that $\calJ' (\lambda , u) $ has a bounded inverse on $\R \times T_{\ci u}\S$.

As a direct consequence, there exists a unique projection $\mathbf{P}_{\hspace{-1pt}h} : \R \times \tangentspace{\ci u} \rightarrow \R \times (V_h \cap \tangentspace{\ci u} )$ with
 \begin{align*}
 \langle \calJ' (\lambda , u) \left( (\sigma , v) - \mathbf{P}_{\hspace{-1pt}h}(\sigma , v) \right), (\tau_h , w_h ) \rangle = 0
 \qquad
 \mbox{for all }
 (\tau_h , w_h )  \in  \R \times (V_h \cap \tangentspace{\ci u} ).
 \end{align*}
Denote the components by $\mathbf{P}_{\hspace{-1pt}h} = (P_{h,1},P_{h,2})$, then
\begin{align*}
0 = \langle  (E''(u)-\lambda \calI) (v-P_{h,2}v) , w_h \rangle - (\sigma-P_{h,1}\sigma) \langle \calI u,w_h   \rangle - \tau_h \langle \calI u, v-P_{h,2}v \rangle \, 
\end{align*}
for all  $ (\tau_h , w_h )  \in  \R \times (V_h \cap \tangentspace{\ci u} )$.  Selecting $w_h=0$ and $\tau_h=1$, we observe that it must hold
 \begin{align*}
\langle \calI u, v-P_{h,2}v \rangle = 0  \quad \Rightarrow \quad v-P_{h,2}v \in \tangentspace{u}.
\end{align*}
 This in turn implies
 \begin{align*}
\langle  (E''(u)-\lambda \calI) (v-P_{h,2}v) , w_h \rangle = (\sigma-P_{h,1}\sigma) \,\langle \calI u,w_h   \rangle
\end{align*}
for all $w_h \in V_h \cap \tangentspace{\ci u}$. 
Linearity $\mathbf{P}_{\hspace{-1pt}h}(\sigma,v) = (P_{h,1}(\sigma),0) + (0,P_{h,2}(v))$ shows that the problem decouples and 
$$
 \sigma-P_{h,1}\sigma = 0.
$$
Hence $P_{h,2} : \tangentspace{\ci u}  \rightarrow V_h \cap \tangentspace{\ci u} $ is uniquely determined by the two conditions
 \begin{align}
 \label{L2-orthogonality}
v-P_{h,2}v \in \tangentspace{u}  \qquad (\Rightarrow \, v-P_{h,2}v \in \tangentspace{u} \cap \tangentspace{\ci u})
\end{align}
and
 \begin{align}
 \label{galerkin-orth}
\langle  (E''(u)-\lambda \calI) (v-P_{h,2}v) , w_h \rangle = 0 \quad \mbox{for all } w_h \in V_h \cap \tangentspace{\ci u}.
\end{align}
These are just the properties claimed in the lemma with $P_h=P_{h,2}$.
\end{proof}
Before we can present the error estimates for $P_h$, we need a short preparation.
\begin{lemma}
Assume \ref{A1}-\ref{A3} and let $u\in \S$ be a ground state. 
If $h\lesssim \eps$, then for every $v\in T_{\ci u} \S$
\begin{align}
\label{best-approx-Tiu-vs-full}
\inf_{v_h \in V_h \cap T_{\ci u} \S}  \| v - v_h \|_{H^1_{\eps}(\D)} \,\,\lesssim\,\, \inf_{v_h \in V_h}  \| v - v_h \|_{H^1_{\eps}(\D)}.
\end{align} 
\end{lemma}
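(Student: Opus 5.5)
Given $v\in T_{\ci u}\S$, I take a near-best approximation $w_h\in V_h$ and correct it into $V_h\cap T_{\ci u}\S$ by removing its component along a discrete function that sees the constraint. Concretely, let $I_h u\in V_h$ be the Lagrange interpolant (or any quasi-best approximation) of the ground state and set $z_h:=\ci I_h u\in V_h$. The constraint defining $T_{\ci u}\S$ is the linear functional $w\mapsto (\ci u,w)_{L^2(\D)}$. I then define
\begin{align*}
v_h := w_h - \frac{(\ci u, w_h)_{L^2(\D)}}{(\ci u, z_h)_{L^2(\D)}}\, z_h ,
\end{align*}
which lies in $V_h$ and satisfies $(\ci u,v_h)_{L^2(\D)}=0$, so $v_h\in V_h\cap T_{\ci u}\S$.

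The key step is to check that the denominator stays bounded away from zero. We have $(\ci u, \ci I_h u)_{L^2(\D)} = (u, I_h u)_{L^2(\D)} = 1 - (u,u-I_h u)_{L^2(\D)}$. By the interpolation estimate and Lemma~\ref{lem:stability-bounds-groundstates}, $\|u-I_hu\|_{L^2(\D)}\lesssim h^2\|D^2u\|_{L^2(\D)}\lesssim (h/\eps)^2$. Hence, if $h\le c\,\eps$ with $c$ small enough, the denominator is at least $1/2$. This is exactly where the assumption $h\lesssim\eps$ is used, and it is the only genuinely delicate point: the hidden constant must be small enough.

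Next I estimate the numerator. Since $(\ci u,v)_{L^2(\D)}=0$, we have $|(\ci u,w_h)_{L^2(\D)}| = |(\ci u, w_h - v)_{L^2(\D)}| \le \|v-w_h\|_{L^2(\D)} \le \eps\|v-w_h\|_{H^1_{\eps}(\D)}$. Moreover,
\begin{align*}
\|z_h\|_{H^1_{\eps}(\D)} \le \|u\|_{H^1_{\eps}(\D)} + \|u-I_hu\|_{H^1_{\eps}(\D)} \lesssim \eps^{-1} + \eps^{-1}\bigl(\tfrac{h}{\eps}\bigr)^2 + \tfrac{h}{\eps^2} \lesssim \eps^{-1},
\end{align*}
using $\|u\|_{L^2(\D)}=1$, $\|\nabla u\|_{L^2(\D)}\lesssim \eps^{-1}$ and $h\lesssim\eps$. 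Therefore the correction term is bounded by $2\,\eps\,\|v-w_h\|_{H^1_{\eps}(\D)}\cdot C\eps^{-1}\lesssim \|v-w_h\|_{H^1_{\eps}(\D)}$, with a constant independent of $\eps$.

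Combining these bounds with the triangle inequality gives $\|v-v_h\|_{H^1_{\eps}(\D)}\lesssim\|v-w_h\|_{H^1_{\eps}(\D)}$. Taking the infimum over $w_h\in V_h$ yields \eqref{best-approx-Tiu-vs-full}.
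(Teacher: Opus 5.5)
Your proof is correct and follows essentially the same rank-one correction argument as the paper, which corrects $P_{L^2}(v)$ along $P_{L^2}(\ci u)$ and keeps the denominator near $1$ via $h\lesssim\eps$. Your choice of an arbitrary $w_h\in V_h$ and the direction $\ci I_h u$, combined with $\|v-w_h\|_{L^2(\D)}\le\eps\|v-w_h\|_{H^1_{\eps}(\D)}$, also avoids the $H^1$-stability of the $L^2$-projection (which needs quasi-uniform meshes) that the paper relies on.
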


\begin{proof}
The argument is standard and elaborated in \cite[Lemma 5.5]{HY25MathComp}. We briefly sketch it in our setting to show where the constraint for $h$ enters. Let $P_{L^2} : H^1_0(\D) \rightarrow V_h$ denote the $L^2$-projection and consider $P_{L^2}^{\perp} : T_{\ci u}\S \rightarrow V_{h} \cap T_{\ci u}\S$ with
\begin{align*}
P_{L^2}^{\perp} (v) := P_{L^2} (v) - \frac{(P_{L^2}(v) - v, \ci u )_{L^2(\D)} }{ (P_{L^2}(\ci u) , \ci u )_{L^2(\D)} } \, P_{L^2}(\ci u)
\end{align*}
for $v \in T_{\ci u}\S$. It is easy to see that $P_{L^2}^{\perp}$ is a projection on $V_{h} \cap T_{\ci u}\S$. Since $(P_{L^2}(\ci u) , \ci u )_{L^2(\D)} = 1 - (\ci u - P_{L^2}(\ci u) , \ci u )_{L^2(\D)} \ge 1 - \| \ci u - P_{L^2}(\ci u) \|_{L^2(\D)} \ge 1 - h \| \nabla u \|_{L^2(\D)} \ge 1 - c \tfrac{h}{\eps}$, we need $h \le \tfrac{1}{2c} \eps$ to bound $(P_{L^2}(\ci u) , \ci u )_{L^2(\D)}^{-1} \le 2$. In this case, we obtain with $\| P_{L^2}(\ci u) \|_{L^2(\D)} \le \| \ci u \|_{L^2(\D)}  = 1$ that $\| v - P_{L^2}^{\perp} (v)\|_{L^2(\D)}  \, \le  \, 3 \, \| v - P_{L^2} (v)\|_{L^2(\D)}$. Similarly, using the $H^1$-stability of the $L^2$-projection on quasi-uniform meshes, we have
\begin{eqnarray*}
\| \nabla (v - P_{L^2}^{\perp} (v)) \|_{L^2(\D)}  &\lesssim&  \| \nabla (v - P_{L^2} (v)) \|_{L^2(\D)}  + \| v - P_{L^2} (v)\|_{L^2(\D)} \| \nabla u \|_{L^2(\D)} \\
&\lesssim& (1 + \tfrac{h}{\eps} )  \| \nabla (v - P_{L^2} (v)) \|_{L^2(\D)} 
\,\,\, \overset{ h \lesssim \eps}{\lesssim}   \| \nabla (v - P_{L^2} (v)) \|_{L^2(\D)} .
\end{eqnarray*}
Combining the estimates for $\| v - P_{L^2}^{\perp} (v) \|_{L^2(\D)}$ and $\| \nabla (v - P_{L^2}^{\perp} (v)) \|_{L^2(\D)}$ and using that $\| v - P_{L^2} (v) \|_{H^1_{\eps}(\D)} \lesssim \inf_{v_h \in V_h} \| \| v - v_h \|_{H^1_{\eps}(\D)}$ finishes the proof. 
\end{proof}

Next, we prove that $P_h$ yields a quasi-best approximation on $\tangentspace{\ci u}$ provided $h$ is sufficiently small. 
\begin{lemma}\label{lem:H1-ritz-proj-est}
Assume \ref{A1}-\ref{A3}, let $u \in \S$ denote a quasi-isolated ground state with eigenvalue $\lambda$ and let $P_h : T_{\ci u} \S \rightarrow V_h \cap T_{\ci u} \S$ denote the Ritz-projection from Lemma \ref{lem:def-ritz-proj}, then if $h \lesssim \eps \, \eta(\eps)$, it holds
\begin{align*}
\| v - P_h v \|_{H^1_{\eps}(\D)} \,\,\lesssim\,\, \inf_{v_h \in V_h}  \| v - v_h \|_{H^1_{\eps}(\D)} \qquad \mbox{for all } v\in T_{\ci u} \S.
\end{align*} 
In particular, $P_h$ is $H^1_{\eps}$-stable on $T_{\ci u} \S$ for $h \lesssim \eps \, \eta(\eps)$, i.e.,
\begin{align}
\label{Ph-Honeeps-stability}
\| P_h v \|_{H^1_{\eps}(\D)} \,\,\,\lesssim \,\,\, \| v\|_{H^1_{\eps}(\D)} 
\qquad \mbox{for all } v\in T_{\ci u} \S.
\end{align} 
\end{lemma}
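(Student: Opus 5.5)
The plan is a Schatz-type argument: combine the Gårding inequality \eqref{eq:secE-weak-coercive} with a duality (Aubin--Nitsche) bound for the $L^2$-error, using the $H^2$-regularity of the constrained Hessian problem from Lemma~\ref{H2-lemma}.

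Fix $v\in T_{\ci u}\S$ and set $e:=v-P_hv$. By Lemma~\ref{lem:def-ritz-proj}, $e\in T_u\S\cap T_{\ci u}\S=H_u\S$. For arbitrary $w_h\in V_h\cap T_{\ci u}\S$, the Gårding inequality, the Galerkin orthogonality \eqref{galerkin-orth} and the continuity \eqref{continuity-constrained-Hessian} give
\begin{align*}
\tfrac12\|e\|_{H^1_\eps(\D)}^2-\tilde c_1\eps^{-2}\|e\|_{L^2(\D)}^2
\le \langle (E''(u)-\lambda\calI)e,v-w_h\rangle
\lesssim \|e\|_{H^1_\eps(\D)}\|v-w_h\|_{H^1_\eps(\D)}.
\end{align*}
By \eqref{best-approx-Tiu-vs-full} (valid since $h\lesssim\eps$), the infimum over $w_h$ can be replaced by the infimum over all of $V_h$. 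It then suffices to show $\eps^{-1}\|e\|_{L^2(\D)}\le \delta\,\|e\|_{H^1_\eps(\D)}$ with $\delta$ small enough to absorb the negative term into the left-hand side.

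For the duality step, apply Lemma~\ref{H2-lemma} with $f=e$ to get $z\in H^2(\D)\cap H_u\S$ solving \eqref{adjoint1-1}. Since $e\in H_u\S$, testing with $e$ gives $\|e\|_{L^2}^2=\langle(E''(u)-\lambda\calI)z,e\rangle$; the form is symmetric, so this equals $\langle(E''(u)-\lambda\calI)e,z\rangle$. Galerkin orthogonality lets us subtract any $z_h\in V_h\cap T_{\ci u}\S$. Using again \eqref{best-approx-Tiu-vs-full}, standard interpolation estimates and \eqref{h2regularity-general-1},
\begin{align*}
\inf_{z_h}\|z-z_h\|_{H^1_\eps(\D)}\lesssim \|\nabla(z-I_hz)\|_{L^2}+\eps^{-1}\|z-I_hz\|_{L^2}\lesssim (h+h^2\eps^{-1})\|D^2z\|_{L^2}\lesssim \tfrac{h}{\eta(\eps)}\|e\|_{L^2}.
\end{align*}
Hence $\|e\|_{L^2}\lesssim \frac{h}{\eta(\eps)}\|e\|_{H^1_\eps}$, so $\eps^{-2}\|e\|_{L^2}^2\lesssim \big(\frac{h}{\eps\,\eta(\eps)}\big)^2\|e\|_{H^1_\eps}^2$. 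Under $h\le c\,\eps\,\eta(\eps)$ with $c$ small, this term is absorbed into $\frac14\|e\|_{H^1_\eps}^2$, which yields the quasi-best approximation estimate.

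The stability \eqref{Ph-Honeeps-stability} follows from the triangle inequality and the choice $v_h=0$ in the infimum.

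The main obstacle is the duality step. One must make sure that $e$ is admissible both as a right-hand side and as a test function in the constrained problem, i.e.\ that $e\in H_u\S$; this is exactly what the $L^2$-orthogonality property of $P_h$ guarantees. The approximation of $z$ also has to be done within $V_h\cap T_{\ci u}\S$, which is where \eqref{best-approx-Tiu-vs-full} and the requirement $h\lesssim\eps$ enter. The $\eps$-bookkeeping is delicate: the $H^2$-bound of order $1/\eta(\eps)$ has to combine with the $\eps^{-2}$ in the Gårding term to produce precisely the condition $h\lesssim\eps\,\eta(\eps)$.
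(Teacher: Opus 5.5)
Your proposal is correct and is essentially the paper's Schatz argument. It uses the same dual problem on $H_u\S$ (admissible because $v-P_hv\in H_u\S$), the same $\eta(\eps)^{-1}$ bound on $\|D^2 z\|_{L^2(\D)}$ from Lemma~\ref{H2-lemma}, and the same reduction \eqref{best-approx-Tiu-vs-full} to unconstrained best approximation, which yields exactly the condition $h\lesssim\eps\,\eta(\eps)$. The only difference is presentational: the paper puts $\tilde c_1\eps^{-2}(v-P_hv)$ directly into the dual right-hand side and absorbs $\inf_{\xi_h}\|\xi-\xi_h\|_{H^1_\eps(\D)}$, whereas you first derive the Aubin--Nitsche bound $\|e\|_{L^2(\D)}\lesssim \tfrac{h}{\eta(\eps)}\|e\|_{H^1_\eps(\D)}$ and then absorb it in the G{\aa}rding inequality.
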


\begin{proof}
Recall the G{\aa}rding inequality in \eqref{eq:secE-weak-coercive}, i.e. 
$\langle (E^{\prime\prime}(u)  -\lambda \mathcal{I}) w ,w \rangle \ge
\tfrac{1}{2}\, \| w \|_{H^1_{\eps}(\D)}^2 - \tfrac{\tilde{c}_1}{\eps^2}\,\| w \|_{L^2(\D)}^2$ for all $w\in H^1_0(\D)$, and let $\xi \in H_{u}\S$ denote the unique solution to
 \begin{align}
 \label{def-xi}
\langle  (E''(u)-\lambda \calI) \xi , w \rangle =  \tilde{c}_1 \,\tfrac{1}{\eps^2} \, (v - P_h v , w )_{L^2(\D)} \quad \mbox{for all } w \in  H_{u}\S,
\end{align}
which exists by the coercivity of the constrained Hessian on the horizontal space $H_u\S$ (cf. Lemma \ref{lem:continuity-coercivity-secE}). Here, $v-P_h v$ is an admissible test function in \eqref{def-xi} because Lemma  \ref{lem:def-ritz-proj} ensures $v-P_h v \in H_u\S$ for any $v\in  T_{\ci u} \S$. 
Hence, we can apply a Schatz argument \cite{Sch74} and use $w=v-P_h v \in  H_u\S$ in the G{\aa}rding inequality to obtain for arbitrary $\xi_h, v_h \in V_h \cap \tangentspace{\ci u}$
 \begin{eqnarray*}
 \| v-P_h v \|_{H^1_{\eps}(\D)}^2  &\le&  \langle  (E''(u)-\lambda \calI) (v-P_h v) , v- P_h v \rangle +   \tilde{c}_1 \,\tfrac{1}{\eps^2} \, \| v-P_h v  \|_{L^2(\D)}^2 \\
 &\overset{\eqref{def-xi}}{=}& \langle (E''(u)-\lambda \calI) (v-P_h v + \xi) , v- P_h v \rangle \\
 &\overset{\eqref{galerkin-orth}}{=}& \langle (E''(u)-\lambda \calI) (v-P_h v + \xi - \xi_h) , v- P_h v \rangle \\
  &\overset{\eqref{galerkin-orth}}{=}& \langle (E''(u)-\lambda \calI) (v-v_h + \xi - \xi_h) , v- P_h v \rangle.
\end{eqnarray*}
Note that the last two steps exploited that $(E''(u)-\lambda \calI)$ is symmetric. With the $H^1_{\eps}$-continuity of $(E''(u)-\lambda \calI)$ in Lemma \ref{lem:continuity-coercivity-secE} we obtain
   \begin{eqnarray*}
 \| v-P_h v \|_{H^1_{\eps}(\D)}^2  &\lesssim&  \left( \| v-v_h \|_{H^1_{\eps}(\D)}  + \|  \xi - \xi_h \|_{H^1_{\eps}(\D)}  \right) \| v- P_h v  \|_{H^1_{\eps}(\D)}
\end{eqnarray*}
 and hence
  \begin{eqnarray}
\nonumber  \| v-P_h v \|_{H^1_{\eps}(\D)}  &\lesssim&  \inf_{v_h \in V_h \cap \tangentspace{\ci u}} \| v-v_h \|_{H^1_{\eps}(\D)}  +\inf_{\xi_h \in V_h \cap \tangentspace{\ci u}}  \|  \xi - \xi_h \|_{H^1_{\eps}(\D)}  \\
\label{est-v-Phv-proof} &\overset{\eqref{best-approx-Tiu-vs-full}}{\lesssim}&  \inf_{v_h \in V_h} \| v-v_h \|_{H^1_{\eps}(\D)}  +\inf_{\xi_h \in V_h}  \|  \xi - \xi_h \|_{H^1_{\eps}(\D)}.
\end{eqnarray} 
Using Lemma \ref{H2-lemma}, the second term can be further estimated as
  \begin{eqnarray*}
 \inf_{\xi_h \in V_h}  \|  \xi - \xi_h \|_{H^1_{\eps}(\D)} &\lesssim& (h^2 \eps^{-1} + h ) \, \| D^2\xi \|_{L^2(\D)} \,\,\, \overset{ h\lesssim \eps}{\lesssim} \,\,\,  h  \, \| D^2\xi \|_{L^2(\D)} \\
  &\lesssim& h  \, \eta(\eps)^{-1} \,\tfrac{1}{\eps^2} \| v-P_h v \|_{L^2(\D)} 
  \,\,\, \lesssim \,\,\, \tfrac{h}{\eps \eta(\eps)} \| v-P_h v \|_{H^1_{\eps}(\D)}. 
\end{eqnarray*} 
Consequently, if $\tfrac{h}{\eps \eta(\eps)} \lesssim 1$ is sufficiently small, we can absorb the term $\inf_{\xi_h \in V_h}  \|  \xi - \xi_h \|_{H^1_{\eps}(\D)}$ in the left hand side of \eqref{est-v-Phv-proof}, which proves the desired estimate.
\end{proof}
As a direct conclusion, we can apply Lemma \ref{lem:H1-ritz-proj-est} to $u \in T_{\ci u} \S$ by using that $\| u \|_{H^2(\D)} \lesssim \eps^{-2}$. We obtain the following.
\begin{conclusion}\label{conc:H1-ritz-proj-est-gs}
Assume \ref{A1}-\ref{A3}, let $u \in \S$ denote a quasi-isolated ground state with eigenvalue $\lambda$ and let $P_h : T_{\ci u} \S \rightarrow V_h \cap T_{\ci u} \S$ denote the Ritz-projection from Lemma \ref{lem:def-ritz-proj}. If $h \lesssim \eps \, \eta(\eps)$, it holds
\begin{align}
\label{est:H1-ritz-proj-est-gs}
\| u - P_h u \|_{H^1_{\eps}(\D)} \,\,\lesssim\,\, \tfrac{h}{\eps^2}. 
\end{align} 
\end{conclusion}

\subsection{Local existence of discrete minimizers}
\label{subsec:loc-existance}

To prove the existence of a discrete minimizer in a small neighborhood of an exact ground state we shall use a Pousin--Rappaz technique \cite{PoRa94}. For this, we need to construct a suitable auxiliary representation of the discrete Gross--Pitaveskii equation, which is done as follows.

Using the Ritz-projection $P_h : T_{\ci u} \S \rightarrow V_h \cap T_{\ci u} \S$ from Lemma \ref{lem:def-ritz-proj}, we define the operator $\calJ_h : \R \times T_{\ci u} \mathbb{S} \rightarrow (\R \times T_{\ci u} \mathbb{S})^{\ast}$ by
\begin{eqnarray}
\label{def-calJ-h}
\lefteqn { \langle \calJ_h( \sigma , v ) , (\tau , w) \rangle  } \\
\nonumber &:=& 
\langle  E'(v)-\sigma \calI v , P_{h} w \rangle + \frac{\tau}{2}(1- \int_{\D} |v|^2 )  
 + 
\langle  (E''(u)-\lambda \calI)v ,  w - P_{h} w \rangle
\end{eqnarray}
for $(\sigma,v),(\tau,w) \in  \R \times T_{\ci u} \mathbb{S}$. This operator allows us characterize discrete ground state pairs $(\lambda_h,u_h) \in \R \times (\S \cap V_h)$ as zeros of $\calJ_h$. To be precise, the following lemma holds.
\begin{lemma}\label{lem:equiv-char-disc-gs}
Assume \ref{A1}-\ref{A3} and let $u \in \S$ be a quasi-isolated ground state with eigenvalue $\lambda\in\R$. Then it holds:
\begin{align*}
(u_h,\lambda_h) \in T_{\ci u} \S \times \R \quad
\mbox{ solves }  \calJ_h( \lambda_h , u_h ) = 0
\end{align*}
\emph{if and only if} \,\,$(u_h,\lambda_h) \in (V_h \cap T_{\ci u} \S \cap \S) \times \R$\,\,with  
\begin{align*}
\langle E^{\prime}(u_h) , v_h \rangle = \lambda_h ( u_h , v_h )_{L^2(\D)} \quad \mbox{ for all } v_h \in V_h \cap T_{\ci u} \S .  
\end{align*}
\end{lemma}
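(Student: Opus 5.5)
We prove both directions by testing $\calJ_h(\lambda_h,u_h)=0$ with suitably chosen pairs $(\tau,w)\in\R\times\tangentspace{\ci u}$. The only properties of the Ritz-projection from Lemma~\ref{lem:def-ritz-proj} that we need are:
\begin{itemize}
\item $P_h$ is a projection onto $V_h\cap\tangentspace{\ci u}$, so $P_h w_h=w_h$ for all $w_h\in V_h\cap\tangentspace{\ci u}$;
\item $w-P_hw\in H_u\S$ for all $w\in\tangentspace{\ci u}$;
\item the Galerkin orthogonality \eqref{galerkin-orth}.
\end{itemize}
The key observation is that the third term in \eqref{def-calJ-h}, $\langle (E''(u)-\lambda\calI)v, w-P_hw\rangle$, vanishes exactly when $v$ is discrete.

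\emph{Direction ``$\Leftarrow$''.} Let $u_h\in V_h\cap\tangentspace{\ci u}\cap\S$ satisfy the discrete equation on $V_h\cap\tangentspace{\ci u}$, and fix any $(\tau,w)\in\R\times\tangentspace{\ci u}$.
\begin{itemize}
\item The constraint term $\tfrac{\tau}{2}\bigl(1-\|u_h\|_{L^2(\D)}^2\bigr)$ vanishes because $u_h\in\S$.
\item The first term vanishes because $P_hw\in V_h\cap\tangentspace{\ci u}$ is an admissible test function in the discrete equation.
\item For the third term, use the symmetry of $E''(u)-\lambda\calI$ to write $\langle (E''(u)-\lambda\calI)u_h, w-P_hw\rangle=\langle (E''(u)-\lambda\calI)(w-P_hw), u_h\rangle$. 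Since $u_h\in V_h\cap\tangentspace{\ci u}$, this is zero by \eqref{galerkin-orth}.
\end{itemize}
Hence $\calJ_h(\lambda_h,u_h)=0$.

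\emph{Direction ``$\Rightarrow$''.} Let $(\lambda_h,u_h)\in\R\times\tangentspace{\ci u}$ with $\calJ_h(\lambda_h,u_h)=0$.
\begin{itemize}
\item Testing with $(\tau,w)=(1,0)$ gives $\|u_h\|_{L^2(\D)}=1$, i.e.\ $u_h\in\S$.
\item Testing with $(0,w_h)$ for $w_h\in V_h\cap\tangentspace{\ci u}$ kills the third term, since $w_h-P_hw_h=0$. What remains is $\langle E'(u_h)-\lambda_h\calI u_h, w_h\rangle=0$, which is the discrete Gross--Pitaevskii equation on $V_h\cap\tangentspace{\ci u}$.
\end{itemize}

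\emph{Main obstacle: showing $u_h\in V_h$.} A priori $u_h$ is only known to lie in $\tangentspace{\ci u}$. The plan is to show $u_h=P_hu_h$.
\begin{enumerate}
\item Test with $(0,w)$ where $w:=u_h-P_hu_h\in H_u\S$. Since $P_hw=P_hu_h-P_h^2u_h=0$, the first term drops out.
\item What remains is $\langle (E''(u)-\lambda\calI)u_h, u_h-P_hu_h\rangle=0$.
\item By \eqref{galerkin-orth} (with symmetry), $\langle (E''(u)-\lambda\calI)P_hu_h, u_h-P_hu_h\rangle=0$. Subtracting this from step 2 yields
\begin{align*}
\langle (E''(u)-\lambda\calI)(u_h-P_hu_h), u_h-P_hu_h\rangle=0 .
\end{align*}
\item Since $u_h-P_hu_h\in H_u\S$, the strong coercivity \eqref{eq:secE-strong-coercive} from quasi-isolation forces $u_h=P_hu_h\in V_h\cap\tangentspace{\ci u}$.
\end{enumerate}

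The delicate points are that $P_h$ is genuinely idempotent and that the symmetric use of \eqref{galerkin-orth} is legitimate. Both follow from the uniqueness part of Lemma~\ref{lem:def-ritz-proj}: a discrete $v_h$ satisfies the two defining conditions with $P_hv_h=v_h$, so uniqueness gives idempotence.
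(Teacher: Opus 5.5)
Your proof is correct and follows essentially the same argument as the paper. You test with $(1,0)$ and with discrete $w_h$, then use the Galerkin orthogonality together with the symmetry of $E''(u)-\lambda\calI$ to get $\langle (E''(u)-\lambda\calI)(u_h-P_hu_h),u_h-P_hu_h\rangle=0$, and coercivity on $H_u\S$ then forces $u_h=P_hu_h$. The only cosmetic difference is that you test directly with $w=u_h-P_hu_h$ (using $P_hw=0$), whereas the paper first shows that the third term of $\calJ_h$ vanishes for all $w\in\tangentspace{\ci u}$ and then takes $w=u_h$.
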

\begin{proof}
$\Rightarrow$: If $(u_h,\lambda_h) \in T_{\ci u} \S \times \R$ solves $\calJ_h( \lambda_h , u_h ) = 0$, we immediately conclude $u_h \in \S$ (by testing with $w=0$ and $\tau=1$) and $\langle E^{\prime}(u_h) , v_h \rangle = \lambda_h ( u_h , v_h )_{L^2(\D)}$ for all $v_h \in V_h \cap T_{\ci u} \S$ (by testing with $w \in V_h \cap T_{\ci u} \S$ and using the projection property of $P_h$). It remains to verify that $u_h \in V_h$. For this, note that  $\calJ_h( \lambda_h , u_h ) = 0$ also implies $\langle  (E''(u)-\lambda \calI)u_h ,  w - P_{h} w \rangle = 0$ for all $w\in  T_{\ci u} \S$. Since $P_h u_h \in V_h \cap T_{\ci u} \S$, we obtain for any $w \in T_{\ci u} \S$
 \begin{eqnarray*}
\lefteqn{  \langle  (E''(u)-\lambda \calI) ( u_h -P_{h}u_h) , w - P_{h} w  \rangle }\\
&=&  \langle  (E''(u)-\lambda \calI)  u_h , w - P_{h} w  \rangle -  \langle  (E''(u)-\lambda \calI) ( P_{h}u_h , w - P_{h} w  \rangle \\
&=&  -  \langle  (E''(u)-\lambda \calI) ( P_{h}u_h , w - P_{h} w  \rangle \overset{\eqref{galerkin-orth}}{=} 0.
 \end{eqnarray*}
Selecting $w=u_h$ yields
 \begin{eqnarray*}
\langle  (E''(u)-\lambda \calI) ( u_h -P_{h}u_h) ,u_h -P_{h}u_h)  \rangle  &=& 0.
 \end{eqnarray*}
 However, since $u_h - P_h u_h \in H_{u}\S$ by Lemma \ref{lem:def-ritz-proj}, we can use the strong coercivity of $E''(u)-\lambda \calI$ on the horizontal space $H_{u}\S$ (cf. Lemma \ref{lem:continuity-coercivity-secE}) to conclude $u_h - P_h u_h=0$, and hence $u_h \in V_h$.\\[0.4em]
$\Leftarrow$: The converse direction follows directly by using again property \eqref{galerkin-orth} in Lemma \ref{lem:def-ritz-proj}.
\end{proof}

The next lemma ensures that $\calJ_h^{\prime}(\lambda,u)$ exists and that it has a bounded inverse. For simplicity of the presentation we define on $\R  \times T_{\ci u}\S$ the norm
\begin{align*}
\| (\sigma , v) \|_{\eps} := \sqrt{ \eps^{2} |\sigma|^2 + \|v\|_{H^1_{\eps}(\D)}^2 }.
 \end{align*}
 Note that the additional scaling $\eps^{2}$ is natural remembering that $\lambda \lesssim \eps^{-2}$ and $\|\nabla u \|_{L^2(\D)} \lesssim \eps^{-1}$. In this case we have
 $\| (\lambda , u) \|_{\eps}^2 = \eps^{2} |\lambda|^2 + \tfrac{1}{\eps^2} \|u\|_{L^2(\D)}^2 + \| \nabla u\|_{L^2(\D)}^2$, where each term is consistently of order $O(\eps^{-2})$.
 
\begin{lemma}\label{lem:bound:Jprimeinv}
Assume \ref{A1}-\ref{A3} and let $u \in \S$ be a quasi-isolated ground state with eigenvalue $\lambda\in\R$, then $\calJ_h$ defined in \eqref{def-calJ-h} is Fr\'echet differentiable with derivative
\begin{eqnarray*}
\lefteqn{ \big\langle \calJ_h'(\mu,z)(\sigma,v),(\tau,w)\big\rangle }\\
&=&
\big\langle (E''(z)- \mu \calI)v - \sigma \calI z, P_h w\big\rangle 
\,-\, \tau \langle \calI z,v \rangle \,+\, \big\langle (E''(u)-\lambda \calI)v, w-P_h w\big\rangle,
\end{eqnarray*}
where $(\mu,z),(\sigma,v),(\tau,w)\in \R \times T_{\ci u}\S$. 
Furthermore, $\calJ_h'(\lambda,u)$ has a bounded inverse, where for any $\calF  \in (\R \times T_{\ci u}\S)^{\ast}$,
\begin{align*}
 \| \calJ_h'(\lambda,u)^{-1} \calF  \|_{\eps} \,\, \lesssim \,\,
\eta(\eps)^{-1}  \, \sup\limits_{0\not= (\sigma,v) \in \R \times T_{\ci u}\S } \frac{ |\calF(\sigma,v)| }{ \| (\sigma , v) \|_{\eps} },
\end{align*}
where $\eta(\eps)$ denotes the coercivity constant from \eqref{eq:secE-strong-coercive}.

Finally, it also holds
\begin{align}
\label{projection-error-rep}
\calJ_h'(\lambda,u)^{-1}  \calJ_h( \lambda , u ) = (0, u - P_h u ).
\end{align}
\end{lemma}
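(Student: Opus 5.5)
The proof has three parts: the derivative formula, the bound on the inverse, and the identity \eqref{projection-error-rep}.

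\emph{Derivative.} Each term of \eqref{def-calJ-h} is differentiated separately.
\begin{itemize}
\item $P_h$ is linear and independent of $(\sigma,v)$, so the first term has derivative $\langle (E''(z)-\mu\calI)v-\sigma\calI z,P_hw\rangle$. This uses the expression \eqref{sd-new} for $E''$.
\item The constraint term $\tfrac{\tau}{2}(1-\int_\D|v|^2)$ has derivative $-\tau(z,v)_{L^2(\D)}$.
\item The last term is linear in $v$, so its derivative is itself.
\end{itemize}
Fr\'echet differentiability follows because $E$ is a polynomial functional, using the $H^1$ embeddings into $L^4$ and $L^6$.

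\emph{Bounded inverse.} At $(\mu,z)=(\lambda,u)$ the $P_hw$ and $w-P_hw$ parts both involve $E''(u)-\lambda\calI$ and recombine. The derivative becomes
\[
\langle\calJ_h'(\lambda,u)(\sigma,v),(\tau,w)\rangle=\langle (E''(u)-\lambda\calI)v,w\rangle-\sigma\langle\calI u,P_hw\rangle-\tau\langle\calI u,v\rangle .
\]
This is the operator $\calJ'(\lambda,u)$ from Lemma \ref{lem:def-ritz-proj}, except that $P_hw$ replaces $w$ in the multiplier term. Given $\calF$, we solve $\calJ_h'(\lambda,u)(\sigma,v)=\calF$ as follows.
\begin{itemize}
\item Test with $(\tau,0)$. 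This gives $(u,v)_{L^2(\D)}=-\calF(1,0)$, so we write $v=\alpha u+v_0$ with $v_0\in H_u\S$ and $\alpha=-\calF(1,0)$. Here $u\in T_{\ci u}\S$ and $\|u\|_{H^1_\eps(\D)}\simeq\eps^{-1}$, so $\|\alpha u\|_{H^1_\eps(\D)}\lesssim \eps^{-1}|\calF(1,0)|$. This is bounded by $\|\calF\|_*$, because $\|(1,0)\|_\eps=\eps$.
\item Test with $w\in H_u\S$. The key observation is that $(u,P_hw)_{L^2(\D)}=(u,w)_{L^2(\D)}=0$, since $w-P_hw\in T_u\S$. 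Hence $\sigma$ drops out and we get
\[
\langle (E''(u)-\lambda\calI)v_0,w\rangle=\calF(0,w)-\alpha\langle(E''(u)-\lambda\calI)u,w\rangle\qquad\mbox{for all } w\in H_u\S.
\]
By \eqref{eq:secE-strong-coercive}, Lemma \ref{lem:est-secE-firstE} and Lax--Milgram, $\|v_0\|_{H^1_\eps(\D)}\lesssim\eta(\eps)^{-1}\|\calF\|_*$.
\item Test with $w=u$. Since $P_hu\in V_h\cap T_{\ci u}\S$, we have $(u,P_hu)_{L^2(\D)}=(u,u)_{L^2(\D)}-(u,u-P_hu)_{L^2(\D)}=1$. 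This determines $\sigma=\langle(E''(u)-\lambda\calI)v,u\rangle-\calF(0,u)$. Then $\eps|\sigma|\lesssim \eps\|v\|_{H^1_\eps(\D)}\|u\|_{H^1_\eps(\D)}+\eps\|u\|_{H^1_\eps(\D)}\|\calF\|_*\lesssim\eta(\eps)^{-1}\|\calF\|_*$.
\item Since $T_{\ci u}\S=\mathrm{span}\{u\}\oplus H_u\S$, the equation then holds for all test functions. Uniqueness follows from the same decomposition with $\calF=0$.
\end{itemize}

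\emph{Projection error identity.} Since $E'(u)=\lambda\calI u$ and $\|u\|_{L^2(\D)}=1$, we get $\langle\calJ_h(\lambda,u),(\tau,w)\rangle=\langle(E''(u)-\lambda\calI)u,w-P_hw\rangle$. We must check that $\calJ_h'(\lambda,u)(0,u-P_hu)$ gives the same functional. Its value on $(\tau,w)$ is $\langle(E''(u)-\lambda\calI)(u-P_hu),w\rangle-\tau(u,u-P_hu)_{L^2(\D)}$.
\begin{itemize}
\item The $\tau$ term vanishes by Lemma \ref{lem:def-ritz-proj}.
\item Split $w=P_hw+(w-P_hw)$. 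The $P_hw$ part vanishes by \eqref{galerkin-orth} and symmetry. In the remaining part, the $P_hu$ contribution vanishes by \eqref{galerkin-orth} applied with test function $P_hu\in V_h\cap T_{\ci u}\S$.
\end{itemize}
The remainder is $\langle(E''(u)-\lambda\calI)u,w-P_hw\rangle$, as required. Applying the inverse gives \eqref{projection-error-rep}.

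The main obstacle is the inverse bound: getting the $\eta(\eps)^{-1}$ scaling uniformly in $h$ through the $\eps$-weighted norms, and in particular handling the $u$-component and the $\sigma$-component without extra powers of $\eps$. The decomposition above, together with $(u,P_hw)_{L^2(\D)}=(u,w)_{L^2(\D)}$ for all $w\in T_{\ci u}\S$, is what keeps $h$ out of these constants.
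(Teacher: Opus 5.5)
Your proposal is correct and follows essentially the same route as the paper. It uses the same derivative computation, the same identity $(u,P_hw)_{L^2(\D)}=(u,w)_{L^2(\D)}$ (which makes $\calJ_h'(\lambda,u)$ coincide with $\calJ'(\lambda,u)$), the same splitting of the unknown into a multiple of $u$ plus a horizontal part determined by testing with $(\tau,0)$, horizontal directions and $(0,u)$, and the same verification of \eqref{projection-error-rep} via \eqref{galerkin-orth} and symmetry. The one difference is how existence of the inverse is obtained: the paper takes it from the kernel/closed-range argument of Lemma \ref{lem:def-ritz-proj} and then derives the $\eta(\eps)^{-1}$ bound a priori, whereas your Lax--Milgram construction on $H_u\S$ gives existence, uniqueness and the bound in one step, so it does not depend on that closed-range assertion.
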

\begin{proof}
The formula for the derivative $\calJ_h'(\mu,z)$ follows by direct calculation. For $(\mu,z)=(\lambda,u)$, it simplifies to 
\begin{eqnarray*}
 \big\langle \calJ_h'(\lambda,u)(\sigma,v),(\tau,w)\big\rangle &=& \big\langle (E''(u)-\lambda \calI)v, w\big\rangle \,-\, \sigma \big\langle \calI u , P_h w\big\rangle 
\,-\, \tau \langle \calI u ,v \rangle  \\
&\overset{w - P_hw \in T_{u}\S}{=}& \big\langle (E''(u)-\lambda \calI)v, w\big\rangle \,-\, \sigma \big\langle \calI u , w\big\rangle 
\,-\, \tau \langle \calI u ,v \rangle.
\end{eqnarray*}
This is exactly the same operator $\calJ_h'(\lambda,u) = \calJ'(\lambda,u)$ that appeared earlier in \eqref{lem:def-ritz-proj} in the proof of Lemma \ref{lem:def-ritz-proj} , where we already verified that it has a bounded inverse on $ \R \times T_{\ci u}\S$. Consequently $\calJ_h'(\lambda,u)^{-1}$ exists and it remains to verify the continuity constant.
For that, let $\calF\in (\R \times T_{\ci u}\S)^{\ast}$ be fixed and consider the solution $(\mu,z) := \calJ_h'(\lambda,u)^{-1}\calF \in \R \times T_{\ci u}\S$ to
\begin{align*}
\langle  \calJ_h'(\lambda,u)  (\mu,z) , (\sigma, v) \rangle =  \calF(\sigma, v) 
\qquad\mbox{for all } (\sigma, v) \in \R \times T_{\ci u}\S.
\end{align*}
Using the expression that we obtained for $\calJ_h'(\lambda,u)$, we have
\begin{eqnarray}
\label{def-Jhiprimnv-prob}
\langle (E''(u)-\lambda \mathcal{I} )z,v \rangle - \mu \langle \mathcal{I} u, v\rangle - \sigma \langle \mathcal{I} u, z\rangle  
 &=&  \calF(\sigma, v) .
\end{eqnarray}
Now decompose $z\in T_{\ci u}\S$ uniquely into
\begin{align*}
z = \alpha \,u  + z_{\perp}, \qquad \mbox{where } \alpha \in \R, \quad z_{\perp} \in H_u \S = T_{\ci u}\S \cap T_{u} \S.
\end{align*} 
Select $v=0$ in \eqref{def-Jhiprimnv-prob}, we obtain 
\begin{eqnarray*}
- \sigma \, \alpha \,  = - \sigma \, \alpha \, \langle \mathcal{I} u, u \rangle  = - \sigma \, \langle \mathcal{I} u, z\rangle  
 =  \calF (\sigma, 0) 
\qquad\mbox{for all } \sigma \in \R .
\end{eqnarray*}
Hence $\alpha = \tfrac{-\calF (\sigma, 0)}{\sigma}$ for any $\sigma \in \R\setminus \{ 0\}$ and therefore
\begin{eqnarray}
\label{est-for-alpha}
|\alpha|  &\le& %
\eps\, \sup\limits_{0\not= (\sigma,0) \in \R \times T_{\ci u}\S } \frac{ |\calF(\sigma,0)| }{ \| (\sigma , 0) \|_{\eps} }
\end{eqnarray}
Next, we test in \eqref{def-Jhiprimnv-prob} with $v=z_{\perp} \in H_{u}\S$ and $\sigma=0$, this yields
\begin{eqnarray*}
\big\langle (E''(u)-\lambda \mathcal{I} )z,\; z_{\perp} \big\rangle 
 &=& \langle \calF , (0, z_{\perp} ) \rangle
\end{eqnarray*}
from which we get, together with the coercivity of $E''(u)-\lambda \mathcal{I}$ on $H_{u}\S$, that
\begin{eqnarray*}
 \eta(\eps) \, \| z_{\perp} \|_{H^1_{\eps}(\D)}^2 \,\,\,\le\,\,\, 
\big\langle (E''(u)-\lambda \mathcal{I} )z_{\perp},\; z_{\perp} \big\rangle 
 \,\,\,=\,\,\, \langle \calF , (0, z_{\perp} ) \rangle - \alpha \big\langle (E''(u)-\lambda \mathcal{I} ) u ,\; z_{\perp} \big\rangle.
\end{eqnarray*}
We can use Lemma \ref{lem:est-secE-firstE} and the previous estimate for $|\alpha|$ to bound the last term as
\begin{eqnarray*}
| \alpha| \, | \big\langle (E''(u)-\lambda \mathcal{I} ) u , \; z_{\perp} \big\rangle|
 &\le& |\alpha|\,  \tfrac{1}{\eps}  \| u \|_{L^2(\D)}   \,  \tfrac{1}{\eps}  \| z_{\perp} \|_{L^2(\D)} \\
&\overset{\eqref{est-for-alpha}}{\le}& \| z_{\perp} \|_{H^1_{\eps}(\D)}  \sup\limits_{0\not= (\sigma,0) \in \R \times T_{\ci u}\S } \frac{ |\calF(\sigma,0)| }{ \| (\sigma , 0) \|_{\eps} }
\end{eqnarray*}
Combining the last two estimates yields
\begin{eqnarray*}
 \eta(\eps) \, \| z_{\perp} \|_{H^1_{\eps}(\D)}^2 &\lesssim&
 \left( \sup\limits_{0\not= (0,v) \in \R \times T_{\ci u}\S } \frac{ |\calF(0,v)| }{ \| (0 , v) \|_{\eps} } + \sup\limits_{0\not= (\sigma,0) \in \R \times T_{\ci u}\S } \frac{ |\calF(\sigma,0)| }{ \| (\sigma , 0) \|_{\eps} } \right)  \| z_{\perp} \|_{H^1_{\eps}(\D)}
 \end{eqnarray*}
Consequently 
\begin{eqnarray*}
\eta(\eps) \| z_{\perp} \|_{H^1_{\eps}(\D)} &\lesssim&  \sup\limits_{0\not= (\sigma,v) \in \R \times T_{\ci u}\S } \frac{ |\calF(\sigma,v)| }{ \| (\sigma , v) \|_{\eps}  } \,\,\, =:\,\,\, ||| \calF |||_{\eps}^*
 \end{eqnarray*}
On the other hand, we also have with \eqref{est-for-alpha} that
\begin{eqnarray*}
\| \alpha u \|_{H^1_{\eps}(\D)} &\lesssim& \eps \, \| u \|_{H^1_{\eps}(\D)} \, ||| \calF |||_{\eps}^* \,\,\, \lesssim \,\,\,||| \calF |||_{\eps}^*.
 \end{eqnarray*}
 Hence, the triangle inequality finally yields
\begin{eqnarray*}
\| z \|_{H^1_{\eps}(\D)} 
&\le& \| \alpha u \|_{H^1_{\eps}(\D)} + \| z_{\perp} \|_{H^1_{\eps}(\D)}  \,\,\,\lesssim\,\,\, ( \eta(\eps)^{-1}  + 1 ) \,||| \calF |||_{\eps}^* \,\,\,\lesssim\,\,\, \eta(\eps)^{-1} ||| \calF |||_{\eps}^*. 
 \end{eqnarray*} 
For the missing bound for $\mu$, we test in \eqref{def-Jhiprimnv-prob} with $v=u$ and $\sigma=0$ to get
\begin{eqnarray*} 
|\mu|  &=&
\left| \langle (E''(u)-\lambda \mathcal{I} )z,u \rangle - \calF(0, u) \right| \\
&\overset{\eqref{continuity-constrained-Hessian}}{\lesssim}& 
\| z \|_{H^1_{\eps}(\D)} \, \| u \|_{H^1_{\eps}(\D)} 
+  ||| \calF |||_{\eps}^* \, \| u \|_{H^1_{\eps}(\D)} 
\,\,\, \lesssim \,\,\, \eps^{-1} (\| z \|_{H^1_{\eps}(\D)}  +  ||| \calF |||_{\eps}^*),
\end{eqnarray*}
With the previous estimate for $\| z \|_{H^1_{\eps}(\D)}$ we conclude
\begin{eqnarray*} 
\eps \, |\mu|  &\lesssim& \eta(\eps)^{-1} ||| \calF |||_{\eps}^*.
\end{eqnarray*}
Altogether, this establishes the desired estimate as
$$
\| (\mu,z)\|_{\eps} \,\,\,\lesssim \,\,\,\eps \, |\mu| + \| z \|_{H^1_{\eps}(\D)} \,\,\,\lesssim \,\,\,\eta(\eps)^{-1} ||| \calF |||_{\eps}^*.
$$
To prove the final identity \eqref{projection-error-rep}, note that $\langle \calJ_h( \lambda , u ) , (\tau,w) \rangle = \langle (E^{\prime\prime}(u) - \lambda \calI) u , w - P_h w \rangle$. Consequently, $(\mu ,z):=\calJ_h'(\lambda,u)^{-1}  \calJ_h( \lambda , u )  \in \R \times T_{\ci u} \S$ solves
\begin{align*}
\big\langle (E''(u)-\lambda \calI)z, w\big\rangle \,-\, \mu \big\langle \calI u , w\big\rangle 
\,-\, \tau \langle \calI u ,z \rangle \,\,=\,\,   \langle (E^{\prime\prime}(u) - \lambda \calI) u , w - P_h w \rangle
\end{align*}
for all $(\tau,w) \in \R \times T_{\ci u} \S$. Since the solution is unique, it is sufficient to verify that $(\mu,z)=(0,u - P_h u )$ fulfills the equation. Indeed, with $u-P_hu \in H_{u}\S$ we have
\begin{eqnarray*}
\lefteqn{ \big\langle (E''(u)-\lambda \calI)(u-P_hu), w\big\rangle \,-\, 0 \, \big\langle \calI u , w\big\rangle 
\,-\, \tau \langle \calI u , u-P_hu \rangle } \\
&=&  \big\langle (E''(u)-\lambda \calI)(u-P_hu), w\big\rangle
\,\,\, \overset{P_hw \in T_{\ci u} \S}{=} \,\,\,  \big\langle (E''(u)-\lambda \calI)(u-P_hu), w- P_h w\big\rangle \\
&\overset{P_hu \in T_{\ci u} \S}{=}&  \big\langle \langle (E''(u)-\lambda \calI) u, w- P_h w\big\rangle, 
\end{eqnarray*}
 where we also used that $E''(u)-\lambda \calI$ is symmetric. Consequently, $(0,u - P_h u )$ solves the defining equation for $\calJ_h'(\lambda,u)^{-1}  \calJ_h( \lambda , u )$.
\end{proof}
Next, we give a Taylor expansion of $\calJ_h'(\mu,z)$ around $(\lambda,u)$. For that note that $\calJ_h''(\lambda,u)$ can be computed as 
\begin{eqnarray}
\label{calJsec}
\lefteqn{ \big\langle \calJ_h''(\lambda,u)\big[(\sigma_1,v_1),(\sigma_2,v_2)\big],(\tau,w)\big\rangle }\\
\nonumber&=&
\big\langle E'''(u)(v_1,v_2),\, P_h w \big\rangle 
- \sigma_1 \big\langle \calI v_2,\, P_h w \big\rangle
- \sigma_2 \big\langle \calI v_1,\, P_h w \big\rangle 
- \tau \big\langle \calI v_1,\, v_2 \big\rangle
\end{eqnarray}
for $v_1,v_2,w\in T_{\ci u} \S$ and $\sigma_1,\sigma_2,\tau \in \R$, where 
\begin{align*}
\big\langle E'''(u)(v_1,v_2),\, P_h w \big\rangle
&= \frac{\beta}{\eps^2} \int_{\D}
\Big(
  2 \Re\big(u \overline{v_2}\big)\, v_1
+ 2 \Re\big(u \overline{v_1}\big)\, v_2
+ 2 \Re\big(v_1 \overline{v_2}\big)\, u
\Big)\,
\overline{P_h w}\,\dx.
\end{align*}
With this, we also obtain that $ \calJ_h^{\prime\prime\prime}(\lambda,u)$ is constant (in $(\lambda,u)$) with
\begin{eqnarray}
\label{calJthird}
\lefteqn{ \big\langle \calJ_h^{\prime\prime\prime}(\lambda,u)
\big[(\sigma_1,v_1),(\sigma_2,v_2),(\sigma_3,v_3)\big],(\tau,w)\big\rangle }\\
\nonumber&=&
\frac{\beta}{\eps^2}
(
  2 \Re(v_3 \overline{v_2})\, v_1
+ 2 \Re(v_3 \overline{v_1})\, v_2
+ 2 \Re(v_1 \overline{v_2})\, v_3
\,,\, P_h w
)_{L^2(\D)}.
\end{eqnarray}
Consequently, the fourth derivative of $ \calJ_h$ needs to vanish, i.e. $ \calJ_h^{\prime\prime\prime\prime}(\lambda,u)=0$. Hence, by Taylor expansion of Fr\'echet differentiable operators and using that $ \calJ_h^{\prime\prime\prime\prime}\equiv 0$, we obtain for any $(\mu,z)\in\R \times T_{\ci u}\S$
\begin{eqnarray}
\label{Taylor:Jprime}
\nonumber
\lefteqn{ \big\langle \left(\calJ_h^{\prime}(\mu,z) -  \calJ_h^{\prime}(\lambda,u) \right)(\sigma,v),(\tau,w) \big\rangle \,\,\, = \,\,\, \big\langle \calJ_h^{\prime\prime}(\lambda,u) (\mu-\lambda,z-u) (\sigma,v),(\tau,w) \big\rangle }\\
&\enspace& + \,\,
\tfrac{1}{2} \, \big\langle \calJ_h^{\prime\prime\prime}(\lambda,u) [(\mu-\lambda,z-u),(\mu-\lambda,z-u)](\sigma,v),(\tau,w) \big\rangle,  \hspace{80pt}
\end{eqnarray}
for $(\sigma,v),(\tau,w) \in \R \times T_{\ci u}\S$. With this, the following continuity estimate can be proved.
\begin{lemma}\label{lem:Taylor:Jprime-est}
Assume \ref{A1}-\ref{A3} and $h \lesssim \eps \, \eta(\eps)$, let $u \in \S$ be a quasi-isolated ground state with eigenvalue $\lambda\in\R$ and let $\calJ_h$ be as in \eqref{def-calJ-h}. For arbitrary $(\mu,z) \in \R \times T_{\ci u}\S$, it holds 
\begin{eqnarray*}
\lefteqn{ | \big\langle \left(\calJ_h^{\prime}(\mu,z) -  \calJ_h^{\prime}(\lambda,u) \right)(\sigma,v),(\tau,w) \big\rangle  | }\\
&\lesssim&
\eps \, |\mu-\lambda| \, \|v \|_{L^2(\D)} \| w\|_{H^1_{\eps}(\D)} + \eps \,|\sigma | \, \|z-u\|_{L^2(\D)}  \| w \|_{H^1_{\eps}(\D)}  + \eps \, |\tau|  \| z-u \|_{L^2(\D)}  \tfrac{1}{\eps} \| v \|_{L^2(\D)} \\
&\enspace& \quad + \,\tfrac{1}{\eps} \, \| v \|_{L^4(\D)} \left( \| z-u \|_{L^4(\D)} + \tfrac{1}{\eps^{d/4}} \| z-u \|_{L^4(\D)}^2 \right)  \| w \|_{H^1_{\eps}(\D)}.
\end{eqnarray*}
\end{lemma}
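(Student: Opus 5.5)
The plan is to start from the exact Taylor identity \eqref{Taylor:Jprime}. It expresses the difference $\calJ_h'(\mu,z)-\calJ_h'(\lambda,u)$ as $\calJ_h''(\lambda,u)$ applied to $(\mu-\lambda,z-u)$, plus one half of the constant third derivative \eqref{calJthird} applied twice to $(\mu-\lambda,z-u)$. Inserting the explicit formula \eqref{calJsec} with $(\sigma_1,v_1)=(\mu-\lambda,z-u)$ and $(\sigma_2,v_2)=(\sigma,v)$, the second-order term splits into four pieces:
\begin{align*}
\langle E'''(u)(z-u,v),P_hw\rangle,\qquad (\mu-\lambda)\,(v,P_hw)_{L^2(\D)},\qquad \sigma\,(z-u,P_hw)_{L^2(\D)},\qquad \tau\,(z-u,v)_{L^2(\D)}.
\end{align*}
The third-order term is a single cubic expression in $z-u,z-u,v$ tested against $P_hw$. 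I will bound each piece separately.

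The ingredient used throughout is the $H^1_{\eps}$-stability \eqref{Ph-Honeeps-stability} of $P_h$, valid under $h\lesssim\eps\,\eta(\eps)$. It gives $\|P_hw\|_{L^2(\D)}\le\eps\|P_hw\|_{H^1_{\eps}(\D)}\lesssim\eps\|w\|_{H^1_{\eps}(\D)}$.
\begin{itemize}
\item The eigenvalue term: $|\mu-\lambda|\,\|v\|_{L^2}\|P_hw\|_{L^2}\lesssim\eps\,|\mu-\lambda|\,\|v\|_{L^2}\|w\|_{H^1_{\eps}}$.
\item The $\sigma$-term is bounded analogously by $\eps|\sigma|\,\|z-u\|_{L^2}\|w\|_{H^1_{\eps}}$.
\item The $\tau$-term is handled by Cauchy--Schwarz alone, since $|\tau|\,\|z-u\|_{L^2}\|v\|_{L^2}=\eps|\tau|\,\|z-u\|_{L^2}\,\tfrac1\eps\|v\|_{L^2}$.
\end{itemize}
These three pieces match the first line of the claimed bound.

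For the $E'''(u)$ term I use $\|u\|_{L^\infty(\D)}\lesssim1$ from Lemma~\ref{lem:stability-bounds-groundstates} and H\"older with exponents $(4,4,2)$. This yields
\begin{align*}
\tfrac{\beta}{\eps^2}\|u\|_{L^\infty}\|z-u\|_{L^4}\|v\|_{L^4}\|P_hw\|_{L^2}\lesssim \tfrac1\eps\,\|v\|_{L^4}\|z-u\|_{L^4}\|w\|_{H^1_{\eps}},
\end{align*}
where once more $\|P_hw\|_{L^2}\lesssim\eps\|w\|_{H^1_\eps}$.

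The cubic term is the step I expect to be the main obstacle. Here H\"older with exponents $(4,4,4,4)$ gives
\begin{align*}
\tfrac{1}{\eps^2}\,\|z-u\|_{L^4}^2\,\|v\|_{L^4}\,\|P_hw\|_{L^4},
\end{align*}
and I need $\|P_hw\|_{L^4}\lesssim \eps^{1-d/4}\|w\|_{H^1_\eps}$. I will get this from the Gagliardo--Nirenberg inequality $\|\varphi\|_{L^4}\lesssim\|\varphi\|_{L^2}^{1-d/4}\|\nabla\varphi\|_{L^2}^{d/4}$ for $\varphi\in H^1_0(\D)$. Combining $\|\varphi\|_{L^2}\le\eps\|\varphi\|_{H^1_\eps}$ with $\|\nabla\varphi\|_{L^2}\le\|\varphi\|_{H^1_\eps}$ gives $\|\varphi\|_{L^4}\lesssim\eps^{1-d/4}\|\varphi\|_{H^1_\eps}$, and then \eqref{Ph-Honeeps-stability} transfers this to $P_hw$. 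The cubic term is therefore bounded by $\tfrac1\eps\,\eps^{-d/4}\|v\|_{L^4}\|z-u\|_{L^4}^2\|w\|_{H^1_\eps}$.

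Summing all contributions gives the stated estimate. The only places where the mesh condition $h\lesssim\eps\,\eta(\eps)$ enters are the uses of the stability of $P_h$.
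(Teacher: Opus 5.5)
Your proposal is correct and follows the paper's proof essentially step by step. You expand via \eqref{Taylor:Jprime}, bound the four second-order pieces from \eqref{calJsec} using the $H^1_{\eps}$-stability \eqref{Ph-Honeeps-stability} of $P_h$, $\|u\|_{L^\infty(\D)}\lesssim 1$ and H\"older with exponents $(4,4,2)$, and you treat the cubic term by H\"older with four $L^4$ factors together with the Gagliardo--Nirenberg bound $\|\varphi\|_{L^4(\D)}\lesssim \eps^{1-d/4}\|\varphi\|_{H^1_{\eps}(\D)}$ applied to $P_h w$, with the mesh condition entering only through the stability of $P_h$.
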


\begin{proof}
We use the Taylor expansion \eqref{Taylor:Jprime} and estimate the first term with \eqref{calJsec} to obtain
\begin{eqnarray*}
\lefteqn{ |\big\langle \calJ_h^{\prime\prime}(\lambda,u) (\mu-\lambda,z-u) (\sigma,v),(\tau,w) \big\rangle | }\\
&\lesssim& |\mu-\lambda| \, \|v \|_{L^2(\D)} \| P_h w\|_{L^2(\D)} + |\sigma | \, \|z-u\|_{L^2(\D)}  \| P_h w \|_{L^2(\D)}  + |\tau|  \| z-u \|_{L^2(\D)}  \| v \|_{L^2(\D)} \\
&\enspace& \quad + \,\tfrac{2\beta}{\eps^2} \, | 
(\Re\big(u \overline{v}\big)\, (z-u) + \Re\big(u \overline{(z-u)}\big)\, v + \Re\big((z-u) \overline{v}\big)\, u
, P_h w )_{L^2(\D)} | \\
&\overset{\eqref{Ph-Honeeps-stability}}{\lesssim}&
\eps \, |\mu-\lambda| \, \|v \|_{L^2(\D)} \| w\|_{H^1_{\eps}(\D)} + \eps \,|\sigma | \, \|z-u\|_{L^2(\D)}  \| w \|_{H^1_{\eps}(\D)}  + \eps \, |\tau|  \| z-u \|_{L^2(\D)}  \tfrac{1}{\eps} \| v \|_{L^2(\D)} \\
&\enspace& \quad + \,\tfrac{1}{\eps^2} \, | 
(\Re\big(u \overline{v}\big)\, (z-u) + \Re\big(u \overline{(z-u)}\big)\, v + \Re\big((z-u) \overline{v}\big)\, u
, P_h w )_{L^2(\D)} | \\
&\overset{\| u \|_{L^{\infty}}\lesssim 1 }{\lesssim}&
\eps \, |\mu-\lambda| \, \|v \|_{L^2(\D)} \| w\|_{H^1_{\eps}(\D)} + \eps \,|\sigma | \, \|z-u\|_{L^2(\D)}  \| w \|_{H^1_{\eps}(\D)}  + \eps \, |\tau|  \| z-u \|_{L^2(\D)}  \tfrac{1}{\eps} \| v \|_{L^2(\D)} \\
&\enspace& \quad + \,\tfrac{1}{\eps^2} \, \| v \|_{L^4(\D)} \| z-u \|_{L^4(\D)}  \| P_h w \|_{L^2(\D)} \\
&\overset{\eqref{Ph-Honeeps-stability}}{\lesssim}&
\eps \, |\mu-\lambda| \, \|v \|_{L^2(\D)} \| w\|_{H^1_{\eps}(\D)} + \eps \,|\sigma | \, \|z-u\|_{L^2(\D)}  \| w \|_{H^1_{\eps}(\D)}  + \eps \, |\tau|  \| z-u \|_{L^2(\D)}  \tfrac{1}{\eps} \| v \|_{L^2(\D)} \\
&\enspace& \quad + \,\tfrac{1}{\eps} \, \| v \|_{L^4(\D)} \| z-u \|_{L^4(\D)}  \| w \|_{H^1_{\eps}(\D)}.
\end{eqnarray*}
For estimating the second term in \eqref{Taylor:Jprime}, we recall the Gagliardo--Nirenberg estimate $\| v\|_{L^4(\D)} \lesssim \| v\|_{L^2(\D)}^{1-d/4} \| \nabla v \|_{L^2(\D)}^{d/4}$ (for $d=2,3$) which implies 
\begin{align}
\label{gag-ni-L4}
\| v\|_{L^4(\D)} \lesssim \eps^{1-d/4} \| v \|_{H^1_{\eps}(\D)} \qquad \mbox{for any } v\in H^1_0(\D).
\end{align}
Similarly
\begin{align}
\label{gag-ni-L3}
\| v\|_{L^3(\D)} \lesssim \eps^{1-d/6} \| v \|_{H^1_{\eps}(\D)} \qquad \mbox{for any } v\in H^1_0(\D).
\end{align}
We are now ready to estimate the latter term in \eqref{Taylor:Jprime} with identity \eqref{calJthird} to obtain
\begin{eqnarray*}
\lefteqn{ | \big\langle \calJ_h^{\prime\prime\prime}(\lambda,u) [(\mu-\lambda,z-u),(\mu-\lambda,z-u)](\sigma,v),(\tau,w) \big\rangle  | 
\,\,\,\lesssim\,\,\, \tfrac{1}{\eps^2} \int_{\D} |v|\, |z-u|^2 |P_hw| \dx }\\
&\lesssim&  \tfrac{1}{\eps^2} \| v \|_{L^4(\D)} \| z-u \|_{L^4(\D)}^2 \| P_h w \|_{L^4(\D)} 
\,\,\,\overset{\eqref{gag-ni-L4}}{\lesssim}\,\,\, \eps^{-1-d/4} \| v \|_{L^4(\D)} \| z-u \|_{L^4(\D)}^2 \| w \|_{H^1_{\eps}(\D)}.
\end{eqnarray*}
Combining the estimates for the two terms in \eqref{Taylor:Jprime} finishes proof.
\end{proof}

\begin{theorem}\label{thrm:loc-existence:u_h}
Assume \ref{A1}-\ref{A3} and let $u \in \S$ be a quasi-isolated ground state with eigenvalue $\lambda$. Furthermore, assume that $h \lesssim \eps \,\eta(\eps)$. Then there exist constants $M^\ast>0$ and $c^\ast>0$, both independent of
$h$ and $\eps$, such that for every $M\in(0,M^\ast]$ the following holds.

If the mesh size satisfies $h \le c^\ast\,\eps\,\eta(\eps)\,\eps^{d/2} M$, 
then there exists a discrete critical point
$(\lambda_h,u_h)\in \R\times (V_h\cap \S\cap T_{\ci u}\S)$
satisfying
\begin{align*}
\langle E'(u_h),v_h\rangle
=
\lambda_h\,(u_h,v_h)_{L^2(\D)}
\qquad
\mbox{for all } v_h\in V_h,
\end{align*}
and
\begin{align}
\label{abstract-bounds}
\nonumber\| (\lambda,u) - (\lambda_h,u_h) \|_{\eps} &\le \tfrac{\eta(\eps)}{\eps} \eps^{d/4} M, \qquad  \| u- u_h \|_{L^2(\D)} \le \eta(\eps) \eps^{d/2} M,\\
 \| u- u_h \|_{L^4(\D)} &\le \eta(\eps) \eps^{d/4} M.
\end{align}
Moreover, $(\lambda_h,u_h)$ is the unique discrete critical point satisfying
\eqref{abstract-bounds}.

Furthermore, $u_h$ satisfies the discrete sufficient second-order condition
\begin{align*}
\langle (E''(u_h)-\lambda_h\mathcal I)v_h,v_h\rangle
\gtrsim
\eta(\eps)\,\|v_h\|_{H^1_\eps(\D)}^2
\qquad
\mbox{for all } v_h\in V_h\cap H_u\S.
\end{align*}
In particular, $u_h$ is a discrete local minimizer.
\end{theorem}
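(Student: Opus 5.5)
The plan is a Pousin--Rappaz / Newton--Kantorovich fixed-point argument for $\calJ_h$ around $(\lambda,u)$. Define on $X:=\R\times T_{\ci u}\S$, equipped with $\|\cdot\|_{\eps}$, the simplified Newton map
\begin{align*}
\Phi(\mu,z) := (\mu,z) - \calJ_h'(\lambda,u)^{-1}\calJ_h(\mu,z).
\end{align*}
Fixed points of $\Phi$ are exactly zeros of $\calJ_h$. By Lemma \ref{lem:equiv-char-disc-gs} these are discrete critical points $(\lambda_h,u_h)\in\R\times(V_h\cap\S\cap T_{\ci u}\S)$, with the Euler--Lagrange equation tested on $V_h\cap T_{\ci u}\S$. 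The equation extends to all $v_h\in V_h$ by testing with $\ci u_h$: indeed $\langle E'(u_h),\ci u_h\rangle=0=(u_h,\ci u_h)_{L^2}$, and $V_h=(V_h\cap T_{\ci u}\S)\oplus\mathrm{span}\{\ci u_h\}$ as long as $(\ci u_h,\ci u)_{L^2}\neq 0$, which holds because $u_h$ is $L^2$-close to $u$.

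I would work on the ball $B_M:=\{(\mu,z):\|(\mu,z)-(\lambda,u)\|_\eps\le \tfrac{\eta}{\eps}\eps^{d/4}M,\ \|z-u\|_{L^2}\le \eta\eps^{d/2}M,\ \|z-u\|_{L^4}\le \eta\eps^{d/4}M\}$, which is closed and convex. For the contraction property I write
\begin{align*}
\Phi(\mu_1,z_1)-\Phi(\mu_2,z_2)=\calJ_h'(\lambda,u)^{-1}\int_0^1\big(\calJ_h'(\lambda,u)-\calJ_h'(\xi_t)\big)\,dt\,\big[(\mu_1,z_1)-(\mu_2,z_2)\big],
\end{align*}
with $\xi_t$ on the segment. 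The integrand is bounded by Lemma \ref{lem:Taylor:Jprime-est} and the inverse by $\eta^{-1}$ (Lemma \ref{lem:bound:Jprimeinv}). Inserting the ball radii and using \eqref{gag-ni-L4} to convert $\|v\|_{L^4}$ into $\eps^{1-d/4}\|v\|_{H^1_\eps}$ gives the following for the individual terms. The $\mu$ term is $\eps|\mu-\lambda|\|v\|_{L^2}\lesssim \eps^{2}|\mu-\lambda|\|v\|_{H^1_\eps}\lesssim \eta\eps^{d/4}M\|\cdot\|_\eps$. The $\sigma,\tau$ terms carry $\|z-u\|_{L^2}\le \eta\eps^{d/2}M$ and need an extra $\eps^{-1}$ against the weight $\eps|\sigma|$. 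The nonlinear term gives $\eps^{-d/4}\big(\|z-u\|_{L^4}+\eps^{-d/4}\|z-u\|_{L^4}^2\big)\lesssim \eta M$.

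After multiplying by $\eta^{-1}$, every contribution is $\lesssim M$. Choosing $M\le M^\ast$ small therefore makes $\Phi$ a contraction with constant $\le 1/2$ on $B_M$, in all three (semi)norms. Controlling the $L^2$ and $L^4$ components of $\Phi$ separately requires $L^2$/$L^4$ bounds on $\calJ_h'(\lambda,u)^{-1}$. I would obtain these from \eqref{h2regularity-general-1} (an Aubin--Nitsche-type bound $\|z\|_{L^2}\lesssim \tfrac{\eps}{\eta}\|f\|$) or, more crudely, from $\|z\|_{L^2}\le\eps\|z\|_{H^1_\eps}$ and \eqref{gag-ni-L4}. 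Checking that these exponents match the radii in \eqref{abstract-bounds} is the bookkeeping I expect to be the \textbf{main obstacle}, together with tracking the hidden $\eps$-powers in the $\tau$ term.

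For self-mapping I use the splitting $\|\Phi(\mu,z)-(\lambda,u)\|\le\|\Phi(\mu,z)-\Phi(\lambda,u)\|+\|\Phi(\lambda,u)-(\lambda,u)\|$. The first term is at most half the radius by contraction. By \eqref{projection-error-rep} the second term equals $\|(0,u-P_hu)\|$, which Conclusion \ref{conc:H1-ritz-proj-est-gs} bounds by $h\eps^{-2}$ in $H^1_\eps$; this also gives $\eps h\eps^{-2}$ in $L^2$ and $\eps^{1-d/4}h\eps^{-2}$ in $L^4$. Requiring these to be at most half the respective radii yields exactly a condition of the form $h\le c^\ast\eps\,\eta\,\eps^{d/2}M$; the $L^2$ radius $\eta\eps^{d/2}M$ is the binding one. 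Banach's fixed point theorem then gives existence and uniqueness of $(\lambda_h,u_h)$ in $B_M$.

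Finally, for the second-order condition I write
\begin{align*}
\langle (E''(u_h)-\lambda_h\calI)v_h,v_h\rangle=\langle (E''(u)-\lambda\calI)v_h,v_h\rangle+\text{perturbation},
\end{align*}
and bound the perturbation as in Lemma \ref{lem:Taylor:Jprime-est} by $\lesssim \eta M\|v_h\|_{H^1_\eps}^2$. With \eqref{eq:secE-strong-coercive} and $M$ small, this gives $\gtrsim\eta\|v_h\|^2_{H^1_\eps}$ on $V_h\cap H_u\S$. Since the only remaining tangent direction is the phase direction, along which the energy is invariant, $u_h$ is a discrete local minimizer.
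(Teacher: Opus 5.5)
Your architecture matches the paper's: $\Phi$ is the paper's $F$, on the same set $K_\eps^M$, with Lemmas~\ref{lem:bound:Jprimeinv} and~\ref{lem:Taylor:Jprime-est}, identity \eqref{projection-error-rep}, the extension via $\ci u_h$, and the perturbation argument for the second-order condition. However, the step you flag as the main obstacle is a genuine gap, and your sketched route does not close it.

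\textbf{Where the argument fails.} In your term-by-term contraction estimate you convert the direction $v=z_1-z_2$ via $\|v\|_{L^2(\D)}\le\eps\|v\|_{H^1_\eps(\D)}$ and \eqref{gag-ni-L4}. What you actually prove is therefore $\|\Phi(\mu_1,z_1)-\Phi(\mu_2,z_2)\|_\eps\le CM\,\|(\mu_1,z_1)-(\mu_2,z_2)\|_\eps$. Use this in your triangle inequality with $(\mu_2,z_2)=(\lambda,u)$ and transfer to $L^2$. You only get $\|\Phi(\mu,z)_2-\Phi(\lambda,u)_2\|_{L^2(\D)}\le CM\,\eta(\eps)\eps^{d/4}M$, which exceeds the $L^2$-radius $\eta(\eps)\eps^{d/2}M$ by the factor $CM\eps^{-d/4}$. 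The $L^4$ part loses the same factor: you get $CM\,\eta(\eps)M$ against the radius $\eta(\eps)\eps^{d/4}M$. So ``at most half the radius by contraction'' would force $M\lesssim\eps^{d/4}$, contradicting the $\eps$-independence of $M^\ast$. A contraction in the $L^2$ or $L^4$ seminorm \emph{separately} is not available either, since the $z$-component of $\Phi$ also depends on $\mu$.

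\textbf{The missing idea.} You do not need an $L^2$/$L^4$ (Aubin--Nitsche-type) bound for $\calJ_h'(\lambda,u)^{-1}$; your crude conversions on the output side suffice. What you must not do is discard the smallness of the \emph{direction} on the input side.
\begin{itemize}
\item Apply Lemma~\ref{lem:Taylor:Jprime-est} with $(\sigma,v)=(\mu-\lambda,z-u)$. Keep $\|z-u\|_{L^2(\D)}\le\eta(\eps)\eps^{d/2}M$ and $\|z-u\|_{L^4(\D)}\le\eta(\eps)\eps^{d/4}M$ as the ball gives them.
\item Every term then becomes quadratic, and $\|\Phi(\mu,z)-\Phi(\lambda,u)\|_\eps\lesssim\tfrac{\eta(\eps)}{\eps}\eps^{d/2}M^2$. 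This is an extra factor $\eps^{d/4}M$ compared with the $\|\cdot\|_\eps$-radius.
\item Now $\|\cdot\|_{L^2(\D)}\le\eps\|\cdot\|_{H^1_\eps(\D)}$ and \eqref{gag-ni-L4} give $\eta(\eps)\eps^{d/2}M^2$ and $\eta(\eps)\eps^{d/4}M^2$, i.e.\ $M$ times the respective radii.
\item The term $(0,u-P_hu)$ is then handled by $h\le c^\ast\eps\,\eta(\eps)\eps^{d/2}M$ exactly as you describe.
\end{itemize}
This is what the paper's invariance step does: it bounds $(\lambda,u)-F(\mu,z)$ directly through the integral representation, not through the contraction constant.

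Equivalently, the same computation applied to arbitrary pairs in $K_\eps^M$ (using convexity for the segment) shows that $\Phi$ is a contraction with constant $\lesssim M$ in the weighted norm $\max\{\|\cdot\|_\eps/r_1,\ \|\cdot\|_{L^2(\D)}/r_2,\ \|\cdot\|_{L^4(\D)}/r_3\}$, where $r_1,r_2,r_3$ are the three radii. That is the precise sense in which your ``contraction in all three seminorms'' holds, and with it your triangle-inequality argument goes through.
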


\begin{proof}
On $\R \times T_{\ci u} \mathbb{S}$ we consider the fixed-point map
\begin{align*}
F(\mu,z) \,\,:=\,\, (\mu,z) - \calJ_h'(\lambda,u)^{-1}  \calJ_h( \mu , z )  \,\,=\,\, \calJ_h'(\lambda,u)^{-1}  \left( \calJ_h'(\lambda,u)(\mu,z) -   \calJ_h( \mu , z ) \right).
\end{align*}
Apparently, any fixed point $(\mu,z) \in \R \times T_{\ci u} \mathbb{S}$ of $F$ fulfills  $\calJ_h'(\lambda,u)^{-1}  \calJ_h( \mu , z ) =0$ and since $\calJ_h'(\lambda,u)^{-1}$ has a trivial kernel, it must necessarily hold $\calJ_h( \mu , z ) =0$. By Lemma \ref{lem:equiv-char-disc-gs}, $( \mu , z )$ is must be a discrete critical point in $V_h \cap T_{\ci u} \S$. Hence, we need to prove the existence of a fixed point, for which we use the Banach fixed-point theorem. 

Given $0<M\le1$ (to be fixed later), we consider $F$ on the following closed subset of $ \R \times T_{\ci u} \mathbb{S}$:
\begin{align*}
K_{\eps}^M \,&:=\, \{ (\sigma,v) \in \R \times T_{\ci u} \mathbb{S} \,\,| \,\, \| (\lambda,u) - (\sigma,v) \|_{\eps} \le \tfrac{\eta(\eps)}{\eps} \eps^{d/4} M, \,\,\,\,   \| u- v \|_{L^2(\D)} \le \eta(\eps) \eps^{d/2} M \\
&\hspace{140pt} \mbox{and} \,\, \| u-v \|_{L^4(\D)} \le \eta(\eps) \eps^{d/4} M \, \}
\end{align*}
and show that $F : K_{\eps}^M \rightarrow K_{\eps}^M$ is a contraction. 

Note that for any $(\mu_1,z_1),(\mu_2,z_2)   \in \R \times T_{\ci u} \mathbb{S}$ we have 
\begin{eqnarray}
\label{F-diff-representation}
\lefteqn{ F(\mu_1,z_1) - F(\mu_2,z_2) } \\
\nonumber &=&  \int_0^1 \calJ_h'(\lambda,u)^{-1}  \left(  \left( \calJ_h'(\lambda,u) -   \calJ_h^{\prime}( \, s\,(\mu_1,z_1) + (1-s)\,(\mu_2,z_2) \, ) \right) [(\mu_1,z_1)-(\mu_2,z_2)]  \right) \,\mbox{d}s.
\end{eqnarray}
We will exploit this identity in the proof.\\[0.4em]
{\it Step 1: Contraction property.} Consider $(\mu_1,z_1),(\mu_2,z_2) \in \R \times T_{\ci u} \mathbb{S}$, then \eqref{F-diff-representation} yields in combination with Lemma \ref{lem:bound:Jprimeinv} that
\begin{eqnarray*}
\lefteqn{ \eta(\eps) \, \| F(\mu_1,z_1) - F(\mu_2,z_2) \|_{\eps} } \\
&\lesssim& \sup_{s\in [0,1]}  \underset{\|(\sigma , v) \|_{\eps}=1}{\sup\limits_{(\sigma,v) \in \R \times T_{\ci u}\S }}  \langle \left( \calJ_h'(\lambda,u) -   \calJ_h^{\prime}(s(\mu_1,z_1) + (1-s)(\mu_2,z_2) ) \right) [(\mu_1,z_1)-(\mu_2,z_2)] , (\sigma , v)\rangle .
\end{eqnarray*}
Applying Lemma \ref{lem:Taylor:Jprime-est} and recalling $\| (\sigma , v) \|_{\eps} = \sqrt{ \eps^{2} |\sigma|^2 + \|v\|_{H^1_{\eps}(\D)}^2 }$, we obtain
\begin{eqnarray*}
\lefteqn{  \| F(\mu_1,z_1) - F(\mu_2,z_2) \|_{\eps} } \\
&\lesssim& \tfrac{\eps}{\eta(\eps)} \, \| z_1-z_2 \|_{L^2(\D)}  \, \sum_{j=1}^2 |\mu_j- \lambda| \, + \tfrac{\eps}{\eta(\eps)} \,|\mu_1-\mu_2 | \, \sum_{j=1}^2  \| z_j-u \|_{L^2(\D)}  \\
&\enspace& \quad \,+  \tfrac{1}{\eps \eta(\eps)} \| z_1-z_2 \|_{L^2(\D)} \sum_{j=1}^2  \| z_j-u \|_{L^2(\D)} \\
&\enspace& \quad + \,\tfrac{1}{\eps\eta(\eps)} \, \| z_1-z_2 \|_{L^4(\D)}  \sum_{j=1}^2  \left( \| z_j - u \|_{L^4(\D)}  + \tfrac{1}{\eps^{d/4}} \|  z_j -u \|_{L^4(\D)}^2 \right) \\
&\overset{(\mu_j,z_j)\in K_{\eps}^M }{\lesssim}& M\, \eps^{d/4} \left( \tfrac{1}{\eps} \| z_1-z_2 \|_{L^2(\D)} +\eps \,|\mu_1-\mu_2 |\,\right) + \,\tfrac{1}{\eps} \, \| z_1-z_2 \|_{L^4(\D)} \left( \eps^{d/4} M + \eps^{d/4} \eta(\eps) M^2 \right) \\
&\overset{\eqref{gag-ni-L4}}{\lesssim}& 
 M\, \eps^{d/4} \left(  \tfrac{1}{\eps} \| z_1-z_2 \|_{L^2(\D)} +\eps \,|\mu_1-\mu_2 |\,\right) + \eps^{-d/4} \, \| z_1-z_2 \|_{H^1_{\eps}(\D)} \left( \eps^{d/4} M + \eps^{d/4}  \eta(\eps) M^2 \right) \\
&\lesssim&
  M\,  \| (z_1,\mu_1)-(z_2,\mu_2) \|_{\eps} + \, M^2 \, \eta(\eps) \, \| z_1-z_2 \|_{H^1_{\eps}(\D)}. 
\end{eqnarray*}
Since $M \,\eta(\eps) \le 1$, we have $M^2 \, \eta(\eps) \| z_1-z_2 \|_{H^1_{\eps}(\D)} \le M\, \| (z_1,\mu_1)-(z_2,\mu_2) \|_{\eps} $ and therefore 
\begin{eqnarray*}
 \| F(\mu_1,z_1) - F(\mu_2,z_2) \|_{\eps} 
&\lesssim& 
M\, \| (z_1,\mu_1)-(z_2,\mu_2) \|_{\eps}.
\end{eqnarray*}
We conclude that if $M \,\le \,c$ for a sufficiently small (generic) constant $c>0$, then $F$ is indeed a contraction, i.e. $ \| F(\mu_1,z_1) - F(\mu_2,z_2) \|_{\eps}  \le L \, \| (z_1,\mu_1)-(z_2,\mu_2) \|_{\eps}$ for some $L<1$.\\[0.4em]
{\it Step 2: Invariance of $K_{\eps}^M$.} Next, we show that $F(\mu,z) \in K_{\eps}^M$ for any $(\mu,z) \in K_{\eps}^M$. For that we need to bound $\| F(\mu,z) - (\lambda,u)\|_{\eps}$ and $\| F(\mu,z)_2 - u \|_{L^4(\D)}$, where $F(\mu,z)_2$ is the $T_{\ci u}\S$-component of the tuple $F(\mu,z)$. 

Using \eqref{F-diff-representation} for $(\mu_1,z_1)=(\lambda,u)$ and $(\mu,z) \in K_{\eps}^M$ we obtain
\begin{eqnarray}
\label{eq:contraction-equality}
\nonumber\lefteqn{ (\lambda,u) - F(\mu,z) \,\,\, = \,\,\, (\lambda,u) - F(\lambda,u) \,\, + \,\, F(\lambda,u)  - F(\mu,z)  } \\
 &=&  (\lambda,u) - F(\lambda,u) \\
\nonumber&\enspace& \quad+ \int_0^1 \calJ_h'(\lambda,u)^{-1}  \left(  \left( \calJ_h'(\lambda,u) -   \calJ_h^{\prime}( \, s\,(\lambda,u) + (1-s)\,(\mu,z) \, ) \right) [(\lambda,u)-(\mu,z)]  \right) \,\mbox{d}s.
\end{eqnarray}
For the first term, we directly have
\begin{align}
\label{eq:contraction-equality-2}
(\lambda,u)- F(\lambda,u) \,\,=\,\, \calJ_h'(\lambda,u)^{-1}  \calJ_h( \lambda , u ) \,\,\overset{\eqref{projection-error-rep}}{=} \,\,(0, u - P_h u )
\end{align}
and the second term is again controlled by  Lemma \ref{lem:bound:Jprimeinv} (estimate for $\calJ_h'(\lambda,u)^{-1}$) and Lemma \ref{lem:Taylor:Jprime-est} (Lipschitz-estimate for $ \calJ_h'(\lambda,u)$). We obtain from \eqref{eq:contraction-equality} and \eqref{eq:contraction-equality-2} that
\begin{eqnarray}
\label{selbstabbildung-est-1}
\nonumber\lefteqn{ \| (\lambda,u) - F(\mu,z) \|_{\eps}  \,\,\, \lesssim \,\,\, \| u - P_h u \|_{H^1_{\eps}(\D)} } \\
\nonumber&+& \eta(\eps)^{-1} \sup_{s\in [0,1]}  \underset{\|(\sigma , v) \|_{\eps}=1}{\sup\limits_{(\sigma,v) \in \R \times T_{\ci u}\S }}  \langle \left( \calJ_h'(\lambda,u) -   \calJ_h^{\prime}(s(\lambda,u) \!+\! (1-s)(\mu,z) ) \right) (\lambda-\mu,u-z) , (\sigma , v)\rangle \\
\nonumber&\overset{\mbox{\tiny Lem.~\ref{lem:Taylor:Jprime-est}}}{\lesssim}&
\| u - P_h u \|_{H^1_{\eps}(\D)} + \tfrac{\eps}{\eta(\eps)} \, \| u-z \|_{L^2(\D)}  \, | \lambda - \mu | \,+ \, \tfrac{1}{\eps\eta(\eps)} \| u-z \|_{L^2(\D)}^2 \\
\nonumber&\enspace& \quad + \,\tfrac{1}{\eps \eta(\eps)} \, \left( \| u - z \|_{L^4(\D)}^2 +  \tfrac{1}{\eps^{d/4}} \|  u - z \|_{L^4(\D)}^3 \right) \\
\nonumber&\overset{(\mu,z)\in K_{\eps}^M }{\lesssim}&
\| u - P_h u \|_{H^1_{\eps}(\D)} + \tfrac{\eta(\eps)}{\eps} \, \eps^{d/2}\, M^2  + \,\tfrac{1}{\eps \eta(\eps)} \, \left( \eta(\eps)^2 \eps^{d/2} M^2 +  \eta(\eps)^3 \eps^{d/2} M^3 \right) \\
&\lesssim& \| u - P_h u \|_{H^1_{\eps}(\D)} +  \tfrac{\eta(\eps)}{\eps} \,\eps^{d/2}  M^2 
\,\,\, \overset{\eqref{est:H1-ritz-proj-est-gs}}{\lesssim} \,\,\, \tfrac{h}{\eps^2}  +  \tfrac{\eta(\eps)}{\eps} \,\eps^{d/2}  M^2,
\end{eqnarray}
where we used the smallness of $M$, $\eps$ and $\eta(\eps)$. So if we write the estimate as
\begin{eqnarray*}
\| (\lambda,u) - F(\mu,z) \|_{\eps}  &\le& C\, \left( \tfrac{h}{\eps^2}  + \tfrac{\eta(\eps)}{\eps} \, \eps^{d/2} M^2 \right)
\end{eqnarray*}
for some generic constant $C$, then we require $C\, \left( \tfrac{h}{\eps^2}  + \tfrac{\eta(\eps)}{\eps} \, \eps^{d/2} M^2 \right) \le \tfrac{\eta(\eps)}{\eps} \eps^{d/4} M$ such that $F(\mu,z)$ can be an element of $K_{\eps}^M$. This requires $M \le C_{\mbox{\tiny res},1}$ for, e.g., $C_{\mbox{\tiny res},1} \le \tfrac{1}{2} C^{-1}$. On the other hand, we also need the mesh size to be small enough that $C\, \tfrac{h}{\eps^2} \le \tfrac{1}{2} \tfrac{\eta(\eps)}{\eps} \eps^{d/4} M$. Hence, we also require the resolution condition $h \le \tfrac{1}{2C} \eta(\eps)\,\eps^{1+d/4}  \,M$ for any given $M$.

Next, we verify the $L^2$-property, i.e. $\| u-F(\mu,z)_2 \|_{L^2(\D)} \le \eta(\eps) \eps^{d/2} M$ where $F(\mu,z) = (F(\mu,z)_1, F(\mu,z)_2) \in \R \times T_{\ci u}\S$ denotes the second component of $F(z,u)$. Here we use $\| \| u-F(\mu,z)_2 \|_{L^2(\D)} \le   \eps \| u-F(\mu,z)_2 \|_{H^1_{\eps}(\D)}$ together with the previous estimate to obtain
\begin{eqnarray*}
\| u-F(\mu,z)_2 \|_{L^2(\D)} &\le&  \eps \| u-F(\mu,z)_2 \|_{H^1_{\eps}(\D)} \,\,\, \le \,\,\, \eps \| u-F(\mu,z) \|_{\eps} \\
&\overset{\eqref{selbstabbildung-est-1}}{\lesssim}& 
\eps 
( \tfrac{h}{\eps^2}  +  \tfrac{\eta(\eps)}{\eps} \,\eps^{d/2}  M^2) 
\,\,\, \lesssim \,\,\, \tfrac{h}{\eps}+ \eta(\eps) \,\eps^{d/2}  M^2.
\end{eqnarray*}
Again, if $C>0$ is the hidden constant such that
\begin{eqnarray*}
\| u-F(\mu,z)_2 \|_{L^2(\D)} &\le& C (\tfrac{h}{\eps}+ \eta(\eps) \,\eps^{d/2}  M^2),
\end{eqnarray*}
we require $M \le \tfrac{1}{2}C^{-1}$ and $h$ such that $C \tfrac{h}{\eps} \le \tfrac{1}{2} \eta(\eps) \eps^{d/2} M$ to ensure that $\| u-F(\mu,z)_2 \|_{L^2(\D)} \le \eta(\eps) \eps^{d/2} M$. Hence, we again recover the condition $h \lesssim \eps\, \eta(\eps) \eps^{d/2} M$.

It remains to verify the $L^4$-property, i.e. $\| u-F(\mu,z)_2 \|_{L^4(\D)} \le \eta(\eps) \eps^{d/4} M$. For this, we can again use the Gagliardo--Nirenberg estimate \eqref{gag-ni-L4} to obtain
\begin{eqnarray*}
\| u-F(\mu,z)_2 \|_{L^4(\D)} &\lesssim&  \eps^{1-d/4} \| u-F(\mu,z)_2 \|_{H^1_{\eps}(\D)} \,\,\, \lesssim \,\,\, \eps^{1-d/4} \| u-F(\mu,z) \|_{\eps} \\
&\overset{\eqref{selbstabbildung-est-1}}{\lesssim}& 
\eps^{1-d/4} 
( \tfrac{h}{\eps^2}  +  \tfrac{\eta(\eps)}{\eps} \,\eps^{d/2}  M^2) 
\,\,\, \lesssim \,\,\,  \eps^{-1-d/4} h + \eta(\eps) \,\eps^{d/4}  M^2.
\end{eqnarray*}
Let once again $C>0$ denote the constant with
\begin{eqnarray*}
\| u-F(\mu,z)_2 \|_{L^4(\D)} &\le& C (\eps^{-1-d/4} h + \eta(\eps) \,\eps^{d/4}  M^2),
\end{eqnarray*}
then $M \le \tfrac{1}{2}C^{-1}$ and $h$ with $C \eps^{-1-d/4} h \le \tfrac{1}{2} \eta(\eps) \eps^{d/4} M$ yield $\| u-F(\mu,z)_2 \|_{L^4(\D)} ]\le \eta(\eps) \eps^{d/4} M$ and therefore $F(\mu,z) \in K_{\eps}^M$ as desired. Note that the condition on $h$ effectively requires $ h \,\lesssim \, \eps \, \eta(\eps) \, \eps^{d/2} M$ again.\\[0.4em]
{\it Step 3: Existence of discrete minimizer.} Since $F : K_{\eps}^M \rightarrow K_{\eps}^M$ fulfills all requirements of the Banach fixed point theorem (provided that $ h \,\lesssim \, \eps \, \eta(\eps) \, \eps^{d/2} M$), we conclude the existence of a unique fixed point $(\lambda_h,u_h) \in K_{\eps}^M$ with $F(\lambda_h,u_h) =(\lambda_h,u_h) $. By construction of $F$, this implies $\calJ_h(\lambda_h,u_h)=0$. Hence, we can apply  Lemma \ref{lem:equiv-char-disc-gs} to conclude that $u_h \in V_h \cap T_{\ci u}\S$ and that it holds
\begin{align*}
\langle E^{\prime}(u_h) , v_h \rangle = \lambda_h ( u_h , v_h )_{L^2(\D)} \qquad \mbox{for all } v_h \in V_h \cap T_{\ci u}\S.
\end{align*}
It remains to verify that the equation holds for all $v_h \in V_h$ (i.e. without the restriction to $T_{\ci u}\S$). Here we can use the invariance of $E$ under global phase shifts, which implies $\langle E^{\prime}(u_h) , \ci u_h \rangle = 0$, as well as $\lambda_h ( u_h , \ci u_h )_{L^2(\D)}=0$. This implies
 \begin{align}
 \label{disc-first-ord-cond-proof}
\langle E^{\prime}(u_h) , v_h \rangle = \lambda_h ( u_h , v_h )_{L^2(\D)}
\qquad \mbox{for all } v_h \in \left( V_h \cap  T_{\ci u} \S\right) \oplus \mbox{span}_{\R}\{ \ci u_h \} .  
 \end{align}
Since $V_h \cap T_{\ci u}\S$ is a real codim-1 subspace of $V_h$, it suffices to show that $\ci u_h \notin V_h \cap T_{\ci u}\S$. Indeed, in this case
$\ci u_h$ provides the missing linearly independent direction and therefore
$(V_h \cap T_{\ci u}\S)\oplus \operatorname{span}_{\mathbb R}\{\ci u_h\} = V_h$. In fact, using $u\in\S$ and $u_h\in K_{\eps}^M$ we have
\begin{align*}
\left| \Re \int_{\D} (\ci u) \,\overline{(\ci u_h)} \dx \right| &= \left| 1 - \Re \int_{\D} u \overline{(u_h -u)} \right| \\
&\ge 1 - \| u \|_{L^2(\D)} \| u_h - u \|_{L^2(\D)}
\ge 1 - \eta(\eps) \eps^{d/2} M > 0,
\end{align*}
by smallness of $\eps$ and $M$. Hence $\ci u_h \in V_h$, but $\ci u_h \not \in T_{\ci u} \S$. We conclude $\left( V_h \cap  T_{\ci u} \S\right) \oplus \mbox{span}_{\R}\{ \ci u_h \} = V_h $ and therefore with \eqref{disc-first-ord-cond-proof} $\langle E^{\prime}(u_h) , v_h \rangle = \lambda_h ( u_h , v_h )_{L^2(\D)}$  for all $v_h \in V_h$.\\[0.4em]
{\it Step 4: Sufficient second-order condition.}  To check the sufficient second-order condition, let $v_h \in V_h \cap H_{u}\S$ be arbitrary. We obtain from the strong coercivity \eqref{eq:secE-strong-coercive} that
\begin{eqnarray}
\label{est-disc-sec-order-cond}
\nonumber\lefteqn{ \langle (E^{\prime\prime}(u_h)  -\lambda \mathcal{I}) v_h ,v_h \rangle 
\,\,\,=\,\,\,  \langle (E^{\prime\prime}(u_h)  -E^{\prime\prime}(u))  v_h ,v_h \rangle  + \langle (E^{\prime\prime}(u)  -\lambda \mathcal{I}) v_h ,v_h \rangle } \\
&\ge& \eta(\eps) \| v_h \|_{H^1_{\eps}(\D)}^2  \,\,-\,\,  \left|  \langle (E^{\prime\prime}(u_h)  -E^{\prime\prime}(u))  v_h ,v_h \rangle \right|\\
\nonumber&=&  \eta(\eps) \| v_h \|_{H^1_{\eps}(\D)}^2  \,\,-\,\,  \left|  \langle E^{\prime\prime\prime}(u)(u_h-u) v_h ,v_h \rangle + \tfrac{1}{2} \langle E^{\prime\prime\prime\prime}(u)(u_h-u,u_h-u) v_h ,v_h \rangle  \right|.
\end{eqnarray}
Since
\begin{align*}
\langle E^{\prime\prime\prime}(u)(u_h-u)\, v_h , v_h \rangle
= \tfrac{\beta}{\eps^2} 
\int_{\D} 
\Big(
2\,\Re\big( u\,(u_h-u)\,\overline{v_h}^{\,2} \big)
+
4\,\Re\big( u\,\overline{(u_h-u)}\big)\,|v_h|^2
\Big)\dx 
\end{align*}
and
\begin{align*}
\tfrac{1}{2} \langle E^{\prime\prime\prime\prime}(u)(u_h-u,u_h-u)\, v_h , v_h \rangle
= \tfrac{\beta}{\eps^2}
\int_{\D} 
\Big(
\Re\big( (u_h-u)^{2}\,\overline{v_h}^{\,2}\big)
+
2\,|u_h-u|^2\,|v_h|^2
\Big)\dx 
\end{align*}
we have, with $\| u \|_{L^{\infty}(\D)} \lesssim 1$, that
\begin{eqnarray*}
|\langle E^{\prime\prime\prime}(u)(u_h-u)\, v_h , v_h \rangle|
&\lesssim& \tfrac{1}{\eps^2} \| u_h - u\|_{L^2(\D)} \| v_h \|_{L^4(\D)}^2 \\
&\overset{\eqref{gag-ni-L4}}{\lesssim}&   \eps^{-d/2} \| u_h - u\|_{L^2(\D)} \| v_h \|_{H^1_{\eps}(\D)}^2 
\,\,\, \overset{u_h \in K_{\eps}^M}{\lesssim} \,\,\,\eta(\eps) \, M\,  \| v_h \|_{H^1_{\eps}(\D)}^2 
\end{eqnarray*}
and similarly
\begin{eqnarray*}
\lefteqn{ |\langle E^{\prime\prime\prime\prime}(u)(u_h-u,u_h-u)\, v_h , v_h \rangle |
\,\,\,\lesssim\,\,\, \tfrac{1}{\eps^2} \| u_h-u\|_{L^4(\D)}^{2}\, \| v_h\|_{L^4(\D)}^{2} } \\
&\overset{\eqref{gag-ni-L4}}{\lesssim}&   \eps^{-d/2}\| u_h-u\|_{L^4(\D)}^{2}\, \| v_h \|_{H^1_{\eps}(\D)}^2 
\,\,\, \overset{u_h \in K_{\eps}^M}{\lesssim} \,\,\,  \eta(\eps)^2 M^2 \, \| v_h \|_{H^1_{\eps}(\D)}^2 
\end{eqnarray*}
Combining the last two estimates with \eqref{est-disc-sec-order-cond} yields
\begin{eqnarray}
\label{preli-est-uh-loc-min}
 \langle (E^{\prime\prime}(u_h)  -\lambda \mathcal{I}) v_h ,v_h \rangle 
&\ge& \eta(\eps) \, (1 - c_1  M - c_2 \eta(\eps) M^2 )\| v_h \|_{H^1_{\eps}(\D)}^2 
\end{eqnarray}
for some generic constants $c_1,c_2>0$. Hence, if $M\le c$ for a sufficiently small constant $c$ then $(1 - c_1 M - c_2 \eta(\eps) M^2 )\gtrsim 1$. It remains to verify that we can replace $\lambda$ by $\lambda_h$. Here we can use 
\begin{align*}
\eps\, |\lambda-\lambda_h| \,\,\le \,\, \| (\lambda,u) - (\lambda_h,u_h) \|_{\eps} \,\, \le \,\,  \tfrac{\eta(\eps)}{\eps} \eps^{d/4} M
\end{align*}
to conclude
\begin{align*}
|\lambda-\lambda_h| \, \| v_h \|_{L^2(\D)}^2  
\,\, \le\,\,   \eps^2 |\lambda-\lambda_h| \, \| v_h \|_{H^1_{\eps}(\D)}^2 
\,\, \le \,\, \eta(\eps) \eps^{d/4} M \, \| v_h \|_{H^1_{\eps}(\D)}^2 .
\end{align*}
Combining this with \eqref{preli-est-uh-loc-min} proves the final estimate.
\end{proof}

\subsection{Defect estimates}
\label{subsec:defect-estimates}

In this section we want to estimate the defect $u_h-P_h u$ for some (isolated) ground state $u$ and the corresponding unique local minimizer $u_h \in V_h \cap T_{\ci u}\S$ according to Theorem \ref{thrm:loc-existence:u_h}. The proof will take place in several steps. We start with a lemma that relates the eigenvalue error $\lambda - \lambda_h$ to the eigenfunction error $u-u_h$.
\begin{lemma}
\label{lem:identity-lam-lamh}
Assume \ref{A1}--\ref{A3}, let $(u,\lambda) \in \S \times \R$ denote a critical point of $E$, i.e.,
\begin{align*}
\langle E'(u), v \rangle = \lambda\, (u,v)_{L^2(\D)} 
\qquad \text{for all } v \in H^1_0(\D),
\end{align*}
and let $(u_h,\lambda_h) \in (\S \cap T_{\ci u}\S)\times \R$ be a discrete critical point, i.e.,
\begin{align*}
\langle E'(u_h), v_h \rangle = \lambda_h\, (u_h , v_h )_{L^2(\D)}
\qquad \text{for all } v_h \in V_h \cap T_{\ci u}\S .
\end{align*}
Then for all $v_h \in V_h \cap T_{\ci u}\S$ it holds that
\begin{eqnarray}
\label{eq:lemma-Epp-difference}
\langle (E''(u)-\lambda \mathcal{I})(u_h-u),\, v_h \rangle 
&=& (\lambda_h - \lambda)\,(u_h , v_h)_{L^2(\D)} - \langle R(u) (u-u_h) , v_h \rangle 
\end{eqnarray}
where
\begin{align*}
\langle R(u) (u-u_h) , v_h \rangle :=  \tfrac{1}{2} \langle E^{\prime\prime\prime}(u)(u_h-u,u_h-u)  , v_h \rangle
     + \tfrac{1}{6} \langle E^{\prime\prime\prime}(u_h-u)(u_h-u,u_h-u)  , v_h \rangle
\end{align*}
can be bounded by
\begin{align}
\label{remainder-bound}
| \langle R(u) (u-u_h) , v_h \rangle  | \,\,\,\lesssim\,\,\, \tfrac{1}{\eps^2} \int_{\D} \left( |u-u_h|^2 + |u-u_h|^3 \right) |v_h| \dx.
 \end{align}
\end{lemma}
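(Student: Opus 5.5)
The plan is to subtract the continuous and discrete first-order conditions and then Taylor-expand $E'$ around $u$; because $E$ is a quartic functional, the expansion terminates exactly. Fix $v_h \in V_h \cap T_{\ci u}\S$. Since $V_h\cap T_{\ci u}\S \subset H^1_0(\D)$, the continuous equation holds with test function $v_h$, and subtracting it from the discrete one gives
\begin{align*}
\langle E'(u_h) - E'(u), v_h\rangle = \lambda_h (u_h,v_h)_{L^2(\D)} - \lambda (u,v_h)_{L^2(\D)} .
\end{align*}
We rewrite the right-hand side as $(\lambda_h-\lambda)(u_h,v_h)_{L^2(\D)} + \lambda (u_h-u,v_h)_{L^2(\D)}$.

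Next we use that $E$ is a polynomial of degree four in $(\Re w, \Im w)$. Hence $E'$ is a cubic map and its derivatives of order $\ge 4$ vanish. The exact Taylor formula is
\begin{align*}
E'(u_h) - E'(u) = E''(u)(u_h-u) + \tfrac12 E'''(u)(u_h-u,u_h-u) + \tfrac16 E''''(u)(u_h-u,u_h-u,u_h-u).
\end{align*}
Since $E''''$ is constant, the last term is exactly what the statement writes as $\tfrac16 E'''(u_h-u)(u_h-u,u_h-u)$. This is because $E'''(w)$ is linear in $w$ (the $\beta$-term is cubic in $w$, so its third derivative is linear in the base point), so $E'''(u_h-u)(\cdot,\cdot)$ coincides with $E''''(\cdot)(u_h-u,\cdot,\cdot)$. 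Inserting the expansion and moving $\lambda(u_h-u,v_h)$ to the left yields \eqref{eq:lemma-Epp-difference} with the stated $R(u)$; only the sign convention has to be matched.

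For the bound \eqref{remainder-bound} we differentiate the nonlinearity $\tfrac{\beta}{\eps^2}|w|^2w$ explicitly. This is the same computation as for $\calJ_h''$ and $\calJ_h'''$ in \eqref{calJsec}--\eqref{calJthird}. Writing $e=u_h-u$, we obtain
\begin{align*}
\langle E'''(u)(e,e),v_h\rangle &= \tfrac{\beta}{\eps^2}\int_{\D} \Re\big( (4\Re(u\bar e)e + 2|e|^2 u)\overline{v_h}\big)\dx, \\
\langle E'''(e)(e,e),v_h\rangle &= \tfrac{6\beta}{\eps^2}\int_{\D}\Re\big(|e|^2 e\,\overline{v_h}\big)\dx .
\end{align*}
The linear part $a_\eps$ contributes nothing to these derivatives. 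Using $\|u\|_{L^\infty(\D)}\lesssim 1$ from Lemma~\ref{lem:stability-bounds-groundstates} gives pointwise bounds by $\eps^{-2}(|e|^2+|e|^3)|v_h|$, and integrating yields \eqref{remainder-bound}.

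There is no real obstacle here. The only points needing care are the bookkeeping of signs and factors in the Taylor remainder, and the fact that $E$ is only real-Fréchet differentiable. Both are handled by working with the real inner product and with the explicit formulas \eqref{sd-new} and \eqref{relation-secE-firstE}. Note also that membership $u_h\in\S$ is not needed for the identity; it only enters later, through $(u_h,v_h)$.
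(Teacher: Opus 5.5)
Your proposal is correct and follows the paper's proof essentially step by step. You subtract the continuous and discrete first-order conditions, use the exact, terminating Taylor expansion of the cubic map $E'$ at $u$ with $E''''(u)(\xi,\xi,\xi)=E'''(\xi)(\xi,\xi)$, and bound the explicit nonlinear terms via $\|u\|_{L^\infty(\D)}\lesssim 1$. Your formulas for $E'''(u)(e,e)$ and $E'''(e)(e,e)=\tfrac{6\beta}{\eps^2}|e|^2e$ are right, and, as in the paper, the $L^\infty$-bound comes from Lemma~\ref{lem:stability-bounds-groundstates}, which is stated for ground states rather than for general critical points.
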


\begin{proof}
Let $\xi := u_h - u$. By the critical point relations for $u$ and $u_h$ we have
\begin{align}
\label{eq:crit-diff}
\langle E'(u_h) - E'(u), v_h \rangle
= \lambda_h (u_h , v_h) - \lambda(u, v_h)
= (\lambda_h - \lambda)(u_h , v_h) + \lambda(\xi, v_h),
\end{align}
for all $v_h \in V_h \cap T_{\ci u}\S$. Using that $E$ is quartic, the Taylor formula for $E'$ at $u$ in direction $\xi$ yields:
\begin{align}
\label{eq:taylor-Eprime}
\nonumber 
E'(u_h) - E'(u)
&= E''(u)\,\xi 
  +\tfrac{1}{2} E^{\prime\prime\prime}(u)(\xi,\xi) 
     + \tfrac{1}{6}E^{\prime\prime\prime\prime}(u)(\xi,\xi,\xi) \\
&= E''(u)\,\xi 
  +\tfrac{1}{2} E^{\prime\prime\prime}(u)(\xi,\xi) 
     + \tfrac{1}{6}E^{\prime\prime\prime}(\xi)(\xi,\xi), 
\end{align}
which is valid by \ref{A1}--\ref{A3}.  
Testing \eqref{eq:taylor-Eprime} with $v_h$ gives
\begin{align}
\label{eq:taylor-tested}
\langle E'(u_h) - E'(u), v_h \rangle
= \langle E''(u)\,\xi , v_h \rangle
  + \tfrac{1}{2} \langle E^{\prime\prime\prime}(u)(\xi,\xi)  , v_h \rangle
     + \tfrac{1}{6} \langle E^{\prime\prime\prime}(\xi)(\xi,\xi)  , v_h \rangle.
\end{align}
Combining \eqref{eq:crit-diff} and \eqref{eq:taylor-tested}, and subtracting
$\lambda(\xi,v_h)$ from both sides, we obtain
\begin{align*}
\langle (E''(u)-\lambda \mathcal{I})\xi , v_h \rangle
= (\lambda_h - \lambda)(u_h , v_h)
  - \tfrac{1}{2} \langle E^{\prime\prime\prime}(u)(\xi,\xi)  , v_h \rangle
     - \tfrac{1}{6} \langle E^{\prime\prime\prime}(\xi)(\xi,\xi)  , v_h \rangle ,
\end{align*}
which is exactly \eqref{eq:lemma-Epp-difference}. Finally, a simple calculation shows
\begin{align*}
 \langle E^{\prime\prime\prime}(u)(\xi,\xi)  , v_h \rangle \lesssim \tfrac{\beta}{\eps^2} \int_{\D} |u|\,|\xi|^2 |v_h| \dx 
 \qquad\mbox{and}\qquad 
 \langle E^{\prime\prime\prime}(\xi)(\xi,\xi)  , v_h \rangle \lesssim \tfrac{\beta}{\eps^2} \int_{\D} |\xi|^3 |v_h| \dx,
\end{align*}
which yields, together with $\| u\|_{L^{\infty}(\D)}\lesssim 1$, the desired bound.
\end{proof}

We can conclude the following identity for the eigenvalue error and a corresponding estimate against $u-u_h$. The same type of estimate can be found in \cite[Theorem 3.2]{HY25MathComp}, however, with stronger requirements and only as an asymptotic result.

\begin{conclusion}\label{conc:ev-estimate}
In the setting of Theorem \ref{thrm:loc-existence:u_h} it holds
\begin{eqnarray}
\label{ev-estimate}
 |\lambda_h - \lambda | 
&\lesssim&
 \| u_h-u\|_{H^1_{\eps}(\D)}^2 + \tfrac{1}{\eps^2} \left| ( |u|^2 u , u_h - u )_{L^2(\D)} \right|.
\end{eqnarray}
\end{conclusion}

\begin{proof}
Testing with $v_h=u_h$ in \eqref{eq:lemma-Epp-difference} and adding and subtracting $\langle (E''(u)-\lambda \mathcal{I})(u_h-u),\, u \rangle$ to the equation yields
\begin{eqnarray*}
 \lefteqn{\lambda_h - \lambda }\\
&=& \langle (E''(u)-\lambda \mathcal{I})(u_h-u),\, (u_h-u) \rangle 
+ \langle (E''(u)-\lambda \mathcal{I})(u_h-u),\, u \rangle 
 + \langle R(u) (u-u_h) , u_h \rangle \\
&\overset{\eqref{relation-secE-firstE}}{=}&
 \langle (E''(u)-\lambda \mathcal{I})(u_h-u),\, (u_h-u) \rangle 
+ \tfrac{2 \beta}{\eps^2} ( |u|^2 u , u_h - u )_{L^2(\D)}  + \langle R(u) (u-u_h) , u_h \rangle.
\end{eqnarray*}
With the continuity estimate \eqref{continuity-constrained-Hessian} for $E^{\prime\prime}(u)  -\lambda \mathcal{I}$ and the remainder estimate \eqref{remainder-bound} for $R(u)$, we obtain
\begin{eqnarray*}
 \lefteqn{|\lambda_h - \lambda |}\\
&\lesssim& \| u_h-u\|_{H^1_{\eps}(\D)}^2 + \tfrac{1}{\eps^2} \left| ( |u|^2 u , u_h - u )_{L^2(\D)} \right| + \tfrac{1}{\eps^2} \int_{\D} \left( |u-u_h|^2 + |u-u_h|^3 \right) |u_h| \dx \\
&\lesssim& \| u_h-u\|_{H^1_{\eps}(\D)}^2 + \tfrac{1}{\eps^2} \left| ( |u|^2 u , u_h - u )_{L^2(\D)} \right| 
+ \tfrac{1}{\eps^2} \left( \| u-u_h \|_{L^3(\D)}^3 + \| u-u_h \|_{L^4(\D)}^4 \right) \\
&\enspace& \quad+ \tfrac{1}{\eps^2} \int_{\D} \left( |u-u_h|^2 + |u-u_h|^3 \right) |u| \dx \\ 
&\overset{\| u \|_{L^{\infty}(\D)} \lesssim 1}{\lesssim}& \| u_h-u\|_{H^1_{\eps}(\D)}^2 + \tfrac{1}{\eps^2} \left| ( |u|^2 u , u_h - u )_{L^2(\D)} \right| 
+ \tfrac{1}{\eps^2} \left(  \| u-u_h \|_{L^3(\D)}^3 + \| u-u_h \|_{L^4(\D)}^4 \right). 
\end{eqnarray*}
To estimate the $\| u-u_h \|_{L^3(\D)}^3$-contribution, we use the $L^p$-interpolation inequality $\| v \|_{L^3(\D)} \le \| v\|_{L^2(\D)}^{1/3} \| v \|_{L^4(\D)}^{2/3}$. Together with the bounds for $u-u_h $ from  Theorem \ref{thrm:loc-existence:u_h}, we have
\begin{align}
\label{abstract-bounds-L3}
\| u-u_h \|_{L^3(\D)} \,\le\, \| u - u_h\|_{L^2(\D)}^{1/3} \| u- u_h \|_{L^4(\D)}^{2/3} 
\,\lesssim\, \eta(\eps) \eps^{(d/2)\cdot(1/3)}  \eps^{(2/3)\cdot(d/4)}  M \,=\, \eta(\eps) \eps^{d/3} M. 
\end{align}
On the other hand, we have the Gagliardo--Nirenberg inequality 
$$
\| u-u_h \|_{L^3(\D)}^2 \lesssim \| u-u_h \|_{L^2(\D)}^{2-d/3} \| \nabla (u-u_h) \|_{L^2(\D)}^{d/3}
\lesssim \eps^{2-d/3} \| u-u_h \|_{H^{1}_{\eps}(\D)}^2.
$$
Combining the two estimates (for $\| u-u_h \|_{L^3(\D)}$ and $\| u-u_h \|_{L^3(\D)}^2$) yields
\begin{eqnarray*}
\tfrac{1}{\eps^2}  \| u-u_h \|_{L^3(\D)}^3 &\lesssim& \tfrac{\eta(\eps)}{\eps^2}  \eps^{d/3} M  \eps^{2-d/3} \| u-u_h \|_{H^{1}_{\eps}(\D)}^2 \\
&\lesssim& \eta(\eps) M \| u-u_h \|_{H^{1}_{\eps}(\D)}^2 \,\,\, \lesssim  \,\,\,\| u-u_h \|_{H^{1}_{\eps}(\D)}^2 .
\end{eqnarray*}
Similarly, we obtain again with Gagliardo--Nirenberg and Theorem \ref{thrm:loc-existence:u_h} that
\begin{eqnarray*}
\tfrac{1}{\eps^2}  \| u-u_h \|_{L^4(\D)}^4 &\lesssim&
\tfrac{1}{\eps^2}  \| u-u_h \|_{L^4(\D)}^2 \| u-u_h \|_{L^4(\D)}^2 
\,\,\,\lesssim\,\,\, \eps^{-d/2} \| u-u_h \|_{L^4(\D)}^2 \| u-u_h \|_{H^1_{\eps}(\D)}^2 \\
&\lesssim& \eta(\eps)^2 M^2 \| u-u_h \|_{H^1_{\eps}(\D)}^2 \,\,\,\lesssim \,\,\,\| u-u_h \|_{H^1_{\eps}(\D)}^2.
\end{eqnarray*}
Consequently, the estimate for $|\lambda_h - \lambda |$ reduces to
\begin{eqnarray*}
 |\lambda_h - \lambda | 
&\lesssim&
 \| u_h-u\|_{H^1_{\eps}(\D)}^2 + \tfrac{1}{\eps^2} \left| ( |u|^2 u , u_h - u )_{L^2(\D)} \right|.
\end{eqnarray*}
\end{proof}
Next, we need to prove that $\tfrac{1}{\eps^2} \left| ( |u|^2 u , u_h - u )_{L^2(\D)} \right|$ can be also bounded by a quadratic contribution in the error. The argument was developed in \cite[Proof of Thm.~3.3.]{HY25MathComp} and we adopt it our setting, which requires a careful use of the Gagliardo-Nirenberg inequality and a tracing of all $\eps$-dependencies as sharp as possible.

\begin{lemma}\label{lem:est-nl-term}
Suppose again that we are in the setting of Theorem \ref{thrm:loc-existence:u_h}. It holds
\begin{eqnarray}
\label{lem:est-nl-term-v1}
 \tfrac{1}{\eps^2} \left| ( |u|^2 u , u_h - u )_{L^2(\D)} \right| 
&\lesssim&   
 \tfrac{1}{\eps \eta(\eps)} \Big( \tfrac{h}{\eps} \| P_h u - u  \|_{H^1_{\eps}(\D)} + |\lambda_h - \lambda| \, \| u_h - u \|_{L^2(\D)} \\
\nonumber &\enspace&\hspace{50pt}  + 
  \tfrac{1}{\eps^2}  \| u-u_h \|_{L^4(\D)}^2  + \tfrac{1}{\eps^{2+d/6}}  \| u-u_h \|_{L^4(\D)}^3 \Big)
\end{eqnarray}
as well as
\begin{eqnarray}
\label{lem:est-nl-term-v2}
\lefteqn{  \tfrac{1}{\eps^2} \left| ( |u|^2 u , u_h - u )_{L^2(\D)} \right| 
\,\,\,\lesssim\,\,\,
 \tfrac{1}{\eps \eta(\eps)} \Big( \tfrac{h}{\eps} \| P_h u - u  \|_{H^1_{\eps}(\D)} + |\lambda_h - \lambda| \, \| u_h - u \|_{L^2(\D)} } \\
\nonumber &\enspace& + 
  \tfrac{h}{\eps^2}  \| u-u_h \|_{L^4(\D)}^2 + \tfrac{h}{\eps^{2+d/6}} \| u-u_h \|_{L^4(\D)}^3 
  + \tfrac{1}{\eps^3}  \| u-u_h \|_{L^2(\D)}^2 + \tfrac{1}{\eps^3}  \| u-u_h \|_{L^3(\D)}^3 \Big)
\end{eqnarray}
\end{lemma}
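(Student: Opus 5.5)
We will use a duality (Aubin--Nitsche type) argument built on the constrained Hessian, following \cite[Proof of Thm.~3.3]{HY25MathComp}. Since $(E''(u)-\lambda\calI)u = \tfrac{2\beta}{\eps^2}\calI(|u|^2u)$ by Lemma~\ref{lem:est-secE-firstE}, the quantity to estimate is a multiple of $\langle (E''(u)-\lambda\calI)u, u_h-u\rangle$.

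\emph{Splitting and dual problem.} Decompose $e:=u_h-u\in T_{\ci u}\S$ as $e=\alpha u+e_\perp$ with $e_\perp\in H_u\S$ and $\alpha=(u,e)_{L^2(\D)}$. Since $u,u_h\in\S$, we have $\alpha=-\tfrac12\|e\|_{L^2(\D)}^2$. The $\alpha$-part is then bounded via Lemma~\ref{lem:est-secE-firstE} by $\tfrac{1}{\eps^2}\|e\|_{L^2(\D)}^2$, which after multiplication by $\eps\eta(\eps)\lesssim\eps$ is of the admissible type. For the $e_\perp$-part, let $z\in H^2(\D)\cap H_u\S$ solve, via Lemma~\ref{H2-lemma} with $f=\tfrac{1}{\eps^2}|u|^2u$,
\[
\langle (E''(u)-\lambda\calI)z,v\rangle=\tfrac{1}{\eps^2}(|u|^2u,v)_{L^2(\D)}\quad\text{for all } v\in H_u\S.
\]
Since $\|f\|_{L^2(\D)}\lesssim\eps^{-2}$, this gives $\|z\|_{H^1_\eps(\D)}\lesssim \tfrac{1}{\eps\eta(\eps)}$ and $\|D^2z\|_{L^2(\D)}\lesssim\tfrac{1}{\eps^2\eta(\eps)}$. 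Then $\tfrac{1}{\eps^2}(|u|^2u,e_\perp)=\langle (E''(u)-\lambda\calI)e_\perp,z\rangle$, and we replace $e_\perp$ by $e$ at the price of another $\alpha$-term. We also use symmetry.

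\emph{Galerkin and consistency.} We split $z=P_hz+(z-P_hz)$.

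For the discrete part, we write $e=(u_h-P_hu)+(P_hu-u)$. The Ritz orthogonality \eqref{galerkin-orth} kills $\langle (E''(u)-\lambda\calI)(P_hu-u),P_hz\rangle$. For $u_h-P_hu$, we invoke Lemma~\ref{lem:identity-lam-lamh} with $v_h=P_hz\in V_h\cap T_{\ci u}\S$. This yields $(\lambda_h-\lambda)(u_h,P_hz)-\langle R(u)(u-u_h),P_hz\rangle$. Since $z\perp u$ in $L^2$, we get $(u_h,P_hz)=(u_h-u,P_hz)+(u,P_hz-z)$. We bound these by $|\lambda_h-\lambda|\,\|e\|_{L^2(\D)}\|z\|_{L^2(\D)}$ plus a higher-order term. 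Here $\|z\|_{L^2(\D)}\le\eps\|z\|_{H^1_\eps(\D)}\lesssim 1/\eta$, which produces the $\tfrac{1}{\eps\eta}|\lambda_h-\lambda|\|e\|_{L^2}$ contribution.

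For the remainder, \eqref{remainder-bound} gives $\tfrac{1}{\eps^2}\int(|e|^2+|e|^3)|P_hz|$. For \eqref{lem:est-nl-term-v1} we estimate it with Hölder and the Gagliardo--Nirenberg bounds \eqref{gag-ni-L4}, \eqref{gag-ni-L3} applied to $P_hz$, using the $H^1_\eps$-stability \eqref{Ph-Honeeps-stability}. Specifically, $\|P_hz\|_{L^\infty}$ is avoided by using $L^2$ against $|e|^2\in L^2$, i.e.\ $\|e\|_{L^4}^2$, and $\|P_hz\|_{L^3}$ paired with $\|e\|_{L^{4}}^3$ in a suitable Hölder split. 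This produces the factors $\eps^{-2}$ and $\eps^{-2-d/6}$ after multiplying by $\|z\|_{H^1_\eps}\lesssim(\eps\eta)^{-1}$.

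For \eqref{lem:est-nl-term-v2} we instead split $P_hz=(P_hz-z)+z$. The $P_hz-z$ piece gains a factor $h$ through $\|z-P_hz\|_{H^1_\eps}\lesssim h\|D^2z\|$, using Lemma~\ref{lem:H1-ritz-proj-est} and interpolation. The $z$ piece uses an $L^\infty$-type bound on $z$ (from $H^2$ and $H^1_\eps$ via Agmon-type interpolation) to give $\tfrac{1}{\eps^3}(\|e\|_{L^2}^2+\|e\|_{L^3}^3)$.

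\emph{Projection part.} The remaining term $\langle (E''(u)-\lambda\calI)e,z-P_hz\rangle$ is handled by Galerkin orthogonality on $u_h-P_hu\in V_h$, leaving $\langle (E''(u)-\lambda\calI)(P_hu-u),z-P_hz\rangle$. This is bounded by continuity \eqref{continuity-constrained-Hessian} as $\|P_hu-u\|_{H^1_\eps}\,\|z-P_hz\|_{H^1_\eps}\lesssim \|P_hu-u\|_{H^1_\eps}\, h\,\tfrac{1}{\eps^2\eta}$, which is exactly $\tfrac{1}{\eps\eta}\tfrac{h}{\eps}\|P_hu-u\|_{H^1_\eps}$.

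\emph{Main obstacle.} The delicate step is the bookkeeping of the remainder terms against $P_hz$ with sharp $\eps$-powers, in particular obtaining an $\eps$-explicit $L^\infty$ or $L^6$ control of $z$ and $P_hz$ from Lemma~\ref{H2-lemma}. For this we first try Gagliardo--Nirenberg combined with $H^1_\eps$-stability of $P_h$, which requires $h\lesssim\eps\eta(\eps)$.
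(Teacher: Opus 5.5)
Your argument follows the paper's proof in all essentials. Both use the splitting $e=\alpha u+e_\perp$, the dual solution $z\in H_u\S$ with data $\tfrac{1}{\eps^2}|u|^2u$, and the split $z=P_hz+(z-P_hz)$, where Galerkin orthogonality leaves $\langle(E''(u)-\lambda\calI)(z-P_hz),P_hu-u\rangle$. Both apply Lemma~\ref{lem:identity-lam-lamh} with $v_h=P_hz$ and treat the remainder in two ways.

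One small correction first. The term $(u,P_hz-z)_{L^2(\D)}$ is not ``higher order'' but exactly zero, because Lemma~\ref{lem:def-ritz-proj} gives $z-P_hz\in T_u\S$, hence $P_hz\in H_u\S$. You need this: a nonzero contribution $|\lambda_h-\lambda|\,\|z-P_hz\|_{L^2(\D)}$ is not covered by either right-hand side.

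The genuine gap is the H\"older step for the cubic remainder. The bound $\int_\D|e|^3|P_hz|\dx\le\|e\|_{L^4(\D)}^3\|P_hz\|_{L^3(\D)}$ is false. Since $|e|^3\in L^{4/3}$, it pairs only with $L^q$ for $q\ge4$, and $\tfrac34+\tfrac13>1$; functions of size $r^{-d/4}$ and $r^{-d/3}$ on a ball of radius $r$ make the left side blow up like $r^{-d/12}$. The paper's proof uses the same pairing, so following it would not fix this. With the correct pairing $\|e\|_{L^4(\D)}^3\|P_hz\|_{L^4(\D)}$ and \eqref{gag-ni-L4}, you lose a factor $\eps^{-d/12}$.

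For \eqref{lem:est-nl-term-v1} this loss is harmless. Your bound $\|z\|_{H^1_\eps(\D)}\lesssim(\eps\eta(\eps))^{-1}$ is correct and sharper than the paper's, and it gives
$\tfrac{1}{\eps\eta(\eps)}\eps^{-1-d/4}\|e\|_{L^4(\D)}^3\le\tfrac{1}{\eps\eta(\eps)}\eps^{-2-d/6}\|e\|_{L^4(\D)}^3$.

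For \eqref{lem:est-nl-term-v2} the loss is not harmless. The piece $\tfrac{1}{\eps^2}\int_\D|e|^3|P_hz-z|\dx$ is controlled via $\|z-P_hz\|_{L^4(\D)}\lesssim\eps^{1-d/4}h\|D^2z\|_{L^2(\D)}$, and Lemma~\ref{H2-lemma} only provides $\|D^2z\|_{L^2(\D)}\lesssim(\eps^2\eta(\eps))^{-1}$. You therefore get $\tfrac{1}{\eps\eta(\eps)}\tfrac{h}{\eps^{2+d/4}}\|e\|_{L^4(\D)}^3$, which exceeds the stated term by a factor $\eps^{-d/12}$.

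An extra ingredient is needed to close this, for instance an Aubin--Nitsche bound.
\begin{itemize}
\item Show $\|z-P_hz\|_{L^2(\D)}\lesssim\tfrac{h}{\eta(\eps)}\|z-P_hz\|_{H^1_\eps(\D)}$. Use a dual problem on $H_u\S$ with data $z-P_hz\in H_u\S$, together with Lemma~\ref{H2-lemma}, symmetry and \eqref{galerkin-orth}.
\item Gagliardo--Nirenberg then gives $\|z-P_hz\|_{L^4(\D)}\lesssim(h/\eta(\eps))^{1-d/4}\|z-P_hz\|_{H^1_\eps(\D)}$.
\item The mesh condition $h\lesssim\eta(\eps)\eps^{1+d/2}$ yields $(h/\eta(\eps))^{1-d/4}\lesssim\eps^{1-d/6}$ for $d=2,3$, which recovers the stated power.
\end{itemize}
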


\begin{proof}
With $T_{\ci u} \S  = H_{u} \S \oplus \mbox{span}_{\R}\{ u \} $, we can decompose the error uniquely as 
\begin{align*}
u_h - u  \,\,=\,\, \alpha_{\perp}  \, u + e_{\perp}
\qquad
\mbox{where}
\quad
 \alpha_{\perp} := (\sqrt{1 - \| e_{\perp} \|_{L^2(\D)}^2} -1  )
\end{align*}
for some $e_{\perp} \in H_{u} \S$ and where it holds $|\alpha_{\perp} | \le \| u-u_h\|_{L^2(\D)}^2$ (see \cite[Lem.~5.10.]{HY25MathComp}).

Next, let $z_u \in H_{u}\S$ denote the unique solution to the auxiliary problem
\begin{eqnarray*}
\langle (E^{\prime\prime}(u)  -\lambda \calI) z_u , v \rangle = \tfrac{\beta}{\eps^2} ( |u|^2 u , v )_{L^2(\D)} \qquad \mbox{for all } v\in H_{u} \S.
\end{eqnarray*}
Since $|u|^2 u \in L^{\infty}(\D)$ with $\| |u|^2 u \|_{L^{\infty}(\D)}$, we have the usual stability bounds $\| z_u \|_{H^1_{\eps}(\D)} \lesssim \tfrac{1}{\eps^2\eta(\eps)}$ and $\| z_u \|_{L^{\infty}(\D)}  \lesssim \| D^2 z_u \|_{L^2(\D)} \lesssim \tfrac{1}{\eps^2\eta(\eps)}$. Using the definition of $z_u$, we have
\begin{eqnarray*}
\tfrac{\beta}{\eps^2} ( |u|^2 u , u_h - u  )_{L^2(\D)} 
&=&  \alpha_{\perp} \, \tfrac{\beta}{\eps^2} ( |u|^2 u ,u  )_{L^2(\D)} + \langle (E^{\prime\prime}(u)  -\lambda \calI) z_u , e_{\perp} \rangle 
\end{eqnarray*}
Plugging $e_{\perp}= (u_h - P_h u) + (P_h u - u) - \alpha_{\perp} u$ into the equation yields
\begin{eqnarray*}
\lefteqn{  \tfrac{\beta}{\eps^2} ( |u|^2 u , u_h - u  )_{L^2(\D)} } \\
&=&  \alpha_{\perp} \, \tfrac{\beta}{\eps^2} ( |u|^2 u ,u  )_{L^2(\D)} + \langle (E^{\prime\prime}(u)  -\lambda \calI) z_u ,  (u_h - P_h u) + (P_h u - u) - \alpha_{\perp} u \rangle \\
&=&  \underbrace{\alpha_{\perp} \, \tfrac{\beta}{\eps^2} ( |u|^2 u ,u  )_{L^2(\D)} }_{:= \mbox{I}}
- \underbrace{ \alpha_{\perp} \, \langle (E^{\prime\prime}(u)  -\lambda \calI) z_u ,  u \rangle }_{:= \mbox{II}} \\
&\enspace&+ \underbrace{ \langle (E^{\prime\prime}(u)  -\lambda \calI) (z_u - P_h z_u) ,  P_h u - u  \rangle }_{:= \mbox{III}}
+ \underbrace{ \langle (E^{\prime\prime}(u)  -\lambda \calI) z_u ,  u_h - P_h u  \rangle }_{:= \mbox{IV}}.
\end{eqnarray*}
For the first term we have directly with $\| u \|_{L^{\infty}(\D)} \lesssim 1$ that
\begin{eqnarray*}
|\mbox{I}| &\lesssim&  \tfrac{1}{\eps^2} |\alpha_{\perp}|  \,\,\, \le \,\,\, \tfrac{1}{\eps^2} \| u-u_h\|_{L^2(\D)}^2.
\end{eqnarray*}
For the second term, we use $(E^{\prime\prime}(u)  -\lambda \calI)u=E^{\prime\prime}(u)\,u  -E^{\prime}(u) =\tfrac{2\beta}{\eps^2} (|u|^2u , \cdot )_{L^2(\D)}$ to obtain
\begin{eqnarray*}
|\mbox{II}| &=&  \tfrac{2\beta}{\eps^2} |\alpha_{\perp}| \,(|u|^2u , z_u )_{L^2(\D)} \,\,\, \lesssim \,\,\, \tfrac{1}{\eps^2} \| u-u_h\|_{L^2(\D)}^2 \| z_u \|_{L^2(\D)} \\
&\lesssim& \tfrac{1}{\eps} \| u-u_h\|_{L^2(\D)}^2 \| z_u \|_{H^1_{\eps}(\D)} 
\,\,\, \lesssim \,\,\, \tfrac{1}{\eps^3\eta(\eps)} \| u-u_h\|_{L^2(\D)}^2.
\end{eqnarray*}
For term three we exploit the properties of $P_h$ (which requires $h\lesssim \eps \,\eta(\eps)$) and we get with the continuity of $E^{\prime\prime}(u)  -\lambda \calI$ that
\begin{eqnarray*}
|\mbox{III}| &=&  |\langle (E^{\prime\prime}(u)  -\lambda \calI) (z_u - P_h z_u) ,  P_h u - u  \rangle| 
\,\,\, \lesssim \,\,\,  \| z_u - P_h z_u \|_{H^1_{\eps}(\D)} \| P_h u - u  \|_{H^1_{\eps}(\D)}  \\
&\lesssim& h \| D^2 z_u \|_{L^2(\D)}   \| P_h u - u  \|_{H^1_{\eps}(\D)}  
\,\,\, \lesssim \,\,\, \tfrac{h}{\eps^2 \eta(\eps)} \| P_h u - u  \|_{H^1_{\eps}(\D)}.
\end{eqnarray*}
To estimate the fourth term, we first note that
\begin{eqnarray*}
 \lefteqn{ \langle (E^{\prime\prime}(u)  -\lambda \calI) z_u ,  u_h - P_h u  \rangle 
 \,\,\,=\,\,\,  \langle (E^{\prime\prime}(u)  -\lambda \calI) P_h z_u ,  u_h - P_h u  \rangle} \\
&=&  \langle (E^{\prime\prime}(u)  -\lambda \calI) P_h z_u ,  u_h - u  \rangle  \\
&\overset{\eqref{eq:lemma-Epp-difference}}{=}&  
(\lambda_h - \lambda)\,(u_h , P_h z_u )_{L^2(\D)} - \langle R(u) (u-u_h) , P_h z_u \rangle \\
&\overset{P_h z_u \in H_{u}\S }{=}& (\lambda_h - \lambda)\,(u_h - u, P_h z_u )_{L^2(\D)} - \langle R(u) (u-u_h) , P_h z_u \rangle.
\end{eqnarray*}
Now we present two different estimates for \,\,$\mbox{IV}= \langle (E^{\prime\prime}(u)  -\lambda \calI) z_u ,  u_h - P_h u \rangle$, depending on if we want to achieve \eqref{lem:est-nl-term-v1} or \eqref{lem:est-nl-term-v2}. For \eqref{lem:est-nl-term-v1} we obtain
\begin{eqnarray*}
 \lefteqn{ |\langle (E^{\prime\prime}(u)  -\lambda \calI) z_u ,  u_h - P_h u  \rangle | }\\
&\lesssim& |\lambda_h - \lambda|\, \eps \, \| u_h - u \|_{L^2(\D)} \| P_h z_u \|_{H^1_{\eps}(\D)} + |\langle R(u) (u-u_h) , P_h z_u \rangle | \\
&\overset{\eqref{remainder-bound}}{\lesssim}& 
 |\lambda_h - \lambda|\, \eps \, \| u_h - u \|_{L^2(\D)} \| P_h z_u \|_{H^1_{\eps}(\D)} +
 \tfrac{1}{\eps^2} \int_{\D} \left( |u-u_h|^2 + |u-u_h|^3 \right) |P_h z_u| \dx \\
 &\overset{\eqref{Ph-Honeeps-stability}}{\lesssim}& |\lambda_h - \lambda|\, \eps \, \| u_h - u \|_{L^2(\D)} \| z_u \|_{H^1_{\eps}(\D)} +
 \tfrac{1}{\eps^2} \int_{\D} \left( |u-u_h|^2 + |u-u_h|^3 \right) |P_h z_u| \dx \\
  &\lesssim& \tfrac{1}{\eps \eta(\eps)} |\lambda_h - \lambda| \, \| u_h - u \|_{L^2(\D)} + 
  \tfrac{1}{\eps^2}  \| u-u_h \|_{L^4(\D)}^2 \| P_h z_u \|_{L^2(\D)} \\
  &\enspace& \quad + \tfrac{1}{\eps^2}  \| u-u_h \|_{L^4(\D)}^3 \| P_h z_u \|_{L^3(\D)} \\
  &\overset{\eqref{gag-ni-L3}}{\lesssim}&  \tfrac{1}{\eps \eta(\eps)} |\lambda_h - \lambda| \, \| u_h - u \|_{L^2(\D)} + 
  \tfrac{1}{\eps}  \| u-u_h \|_{L^4(\D)}^2 \| z_u \|_{H^1_{\eps}(\D)} \\
  &\enspace& \quad + \tfrac{1}{\eps^2} \eps^{1-d/6}  \| u-u_h \|_{L^4(\D)}^3 \| z_u \|_{H^1_{\eps}(\D)} \\
  &\lesssim& \tfrac{1}{\eps \eta(\eps)} \left( |\lambda_h - \lambda| \, \| u_h - u \|_{L^2(\D)} + 
  \tfrac{1}{\eps^2}  \| u-u_h \|_{L^4(\D)}^2  + \tfrac{1}{\eps^{2+d/6}}  \| u-u_h \|_{L^4(\D)}^3 \right).
\end{eqnarray*}
For the alternative estimate \eqref{lem:est-nl-term-v2} we only deviate in the treatment of $\int_{\D} \left( |u-u_h|^2 + |u-u_h|^3 \right) |P_h z_u| \dx$. In this case, we obtain
\begin{eqnarray*}
 \lefteqn{ |\langle (E^{\prime\prime}(u)  -\lambda \calI) z_u ,  u_h - P_h u  \rangle | }\\
&\lesssim& \tfrac{1}{\eps \eta(\eps)} |\lambda_h - \lambda| \, \| u_h - u \|_{L^2(\D)} +
 \tfrac{1}{\eps^2} \int_{\D} \left( |u-u_h|^2 + |u-u_h|^3 \right) |P_h z_u| \dx \\
 &\le& \tfrac{1}{\eps \eta(\eps)} |\lambda_h - \lambda| \, \| u_h - u \|_{L^2(\D)} +
 \tfrac{1}{\eps^2} \int_{\D} \left( |u-u_h|^2 + |u-u_h|^3 \right) (|P_h z_u - z_u| + |z_u|) \dx \\
  &\lesssim& \tfrac{1}{\eps \eta(\eps)} |\lambda_h - \lambda| \, \| u_h - u \|_{L^2(\D)} + 
  \tfrac{1}{\eps^2}  \| u-u_h \|_{L^4(\D)}^2 \| P_h z_u - z_u \|_{L^2(\D)} \\
  &\enspace& \quad + \tfrac{1}{\eps^2}  \| u-u_h \|_{L^4(\D)}^3 \| P_h z_u  - z_u\|_{L^3(\D)} 
  + \tfrac{1}{\eps^2}  \| u-u_h \|_{L^2(\D)}^2 \| z_u \|_{L^{\infty}(\D)} \\
  &\enspace& \quad + \tfrac{1}{\eps^2}  \| u-u_h \|_{L^3(\D)}^3 \| z_u \|_{L^{\infty}(\D)}
  \\
  &\overset{\eqref{gag-ni-L3}}{\lesssim}& \tfrac{1}{\eps \eta(\eps)} |\lambda_h - \lambda| \, \| u_h - u \|_{L^2(\D)} + 
  \tfrac{h}{\eps^3 \eta(\eps)}  \| u-u_h \|_{L^4(\D)}^2 + \tfrac{h}{\eps^3 \eta(\eps)} \eps^{-d/6} \| u-u_h \|_{L^4(\D)}^3  \\
  &\enspace& \quad 
  + \tfrac{1}{\eps^4 \eta(\eps)}  \| u-u_h \|_{L^2(\D)}^2 + \tfrac{1}{\eps^4 \eta(\eps)}  \| u-u_h \|_{L^3(\D)}^3.
  \\
  &\lesssim& \tfrac{1}{\eps \eta(\eps)} \left( |\lambda_h - \lambda| \, \| u_h - u \|_{L^2(\D)} \right. \\ 
  &\enspace&\quad\left. + 
  \tfrac{h}{\eps^2}  \| u-u_h \|_{L^4(\D)}^2 + \tfrac{h}{\eps^{2+d/6}} \| u-u_h \|_{L^4(\D)}^3 
  + \tfrac{1}{\eps^3}  \| u-u_h \|_{L^2(\D)}^2 + \tfrac{1}{\eps^3}  \| u-u_h \|_{L^3(\D)}^3
   \right).
\end{eqnarray*}
Combining the estimates for I, II, III and IV (two versions) proves the lemma.
\end{proof}
Let us now combine the results of Conclusion \ref{conc:ev-estimate} and Lemma \ref{lem:est-nl-term}.

\begin{conclusion}\label{conclusion:ev-estimate-sharper}
Assume the setting of Theorem \ref{thrm:loc-existence:u_h}. Then there exists some $M^{\ast} >0$ (independent of $\eps$ and $h$) such that for all $M\le M^{\ast}$ it holds
\begin{eqnarray}
\label{conclusion:ev-estimate-sharper-est-v1}
\lefteqn{ |\lambda_h - \lambda | } \\
\nonumber &\lesssim & \| u_h-u\|_{H^1_{\eps}(\D)}^2  +
\tfrac{1}{\eps \eta(\eps)} \Big( \tfrac{h}{\eps} \| P_h u - u  \|_{H^1_{\eps}(\D)}  + 
  \tfrac{1}{\eps^2}  \| u-u_h \|_{L^4(\D)}^2  + \tfrac{1}{\eps^{2+d/6}}  \| u-u_h \|_{L^4(\D)}^3 \Big)
\end{eqnarray}
and
\begin{eqnarray}
\label{conclusion:ev-estimate-sharper-est-v2}
\lefteqn{ |\lambda_h - \lambda | \,\,\, \lesssim \,\,\,  \| u_h-u\|_{H^1_{\eps}(\D)}^2  + \tfrac{1}{\eps \eta(\eps)} \Big( \tfrac{h}{\eps} \| P_h u - u  \|_{H^1_{\eps}(\D)} 
+\tfrac{h}{\eps^2}  \| u-u_h \|_{L^4(\D)}^2 }\\
\nonumber &\enspace& \qquad
 + \tfrac{h}{\eps^{2+d/6}} \| u-u_h \|_{L^4(\D)}^3 
  + \tfrac{1}{\eps^3}  \| u-u_h \|_{L^2(\D)}^2 + \tfrac{1}{\eps^3}  \| u-u_h \|_{L^3(\D)}^3 \Big).
\end{eqnarray}
We will use the two estimates to treat different terms in the final estimate.
\end{conclusion}

\begin{proof}
We insert the estimates \eqref{lem:est-nl-term-v1} and \eqref{lem:est-nl-term-v2} of Lemma \ref{lem:est-nl-term} into \eqref{ev-estimate}. For example, in the case of estimate \eqref{lem:est-nl-term-v1} we obtain
\begin{eqnarray*}
|\lambda_h - \lambda | &\lesssim & \| u_h-u\|_{H^1_{\eps}(\D)}^2 + \tfrac{1}{\eps \eta(\eps)}  |\lambda_h - \lambda| \, \| u_h - u \|_{L^2(\D)} \\
&\enspace& +
\tfrac{1}{\eps \eta(\eps)} \Big( \tfrac{h}{\eps} \| P_h u - u  \|_{H^1_{\eps}(\D)}  + 
  \tfrac{1}{\eps^2}  \| u-u_h \|_{L^4(\D)}^2  + \tfrac{1}{\eps^{2+d/6}}  \| u-u_h \|_{L^4(\D)}^3 \Big).
\end{eqnarray*}
We apply the estimate $\| u- u_h \|_{L^2(\D)} \le \eta(\eps) \eps^{d/2} M$ from Theorem \ref{thrm:loc-existence:u_h} to bound the eigenvalue-contribution on the right hand side by
\begin{eqnarray*}
\tfrac{1}{\eps \eta(\eps)}  |\lambda_h - \lambda| \, \| u_h - u \|_{L^2(\D)}
&\lesssim& \eps^{d/2-1} |\lambda_h - \lambda| \, M \,\,\, \overset{d=2,3}{\lesssim} \,\,\,  |\lambda_h - \lambda| \, M.
\end{eqnarray*}
Hence, if $M$ is sufficiently small (independent of $\eps$ and $h$), we can absorb $|\lambda_h - \lambda| \, M$ into the left hand side. The other estimate is analogous. This proves the conclusion.
\end{proof}

We are now prepared to estimate the defect $u_h-P_h u$ against the error $u-u_h$.
\begin{lemma}\label{lem:defect-est}
Assume again that we are in the setting of Theorem \ref{thrm:loc-existence:u_h}, in particular, the mesh size fulfills (at least) $h \lesssim \eps \, \eta(\eps) \, \eps^{d/2} M$. Then, there exists a constant $M^{\ast} >0$ (independent of $\eps$ and $h$) such that for all $M\le M^{\ast}$ it holds
\begin{eqnarray*}
 \| u_h-P_h u \|_{H^1_{\eps}(\D)} 
&\lesssim& M  \| u-u_h \|_{H^1_{\eps}(\D)}.
\end{eqnarray*}
\end{lemma}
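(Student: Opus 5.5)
Our plan is to estimate the defect $e_h := u_h - P_h u \in V_h \cap T_{\ci u}\S$ by splitting it along $T_{\ci u}\S = H_u\S \oplus \mathrm{span}_{\R}\{u\}$, and then to use the coercivity of $E''(u)-\lambda\calI$ on $H_u\S$ together with the error identity of Lemma~\ref{lem:identity-lam-lamh}.

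For the component in direction $u$ we use that $u - P_h u \in T_u\S$ by Lemma~\ref{lem:def-ritz-proj}. This gives $(e_h,u)_{L^2(\D)} = (u_h-u,u)_{L^2(\D)} = -\tfrac12\|u-u_h\|_{L^2(\D)}^2$, since both $u$ and $u_h$ lie in $\S$. We then write $e_h = \gamma u + e_\perp$ with $e_\perp\in H_u\S$ and $|\gamma|\lesssim \|u-u_h\|_{L^2(\D)}^2$. It follows that $\|\gamma u\|_{H^1_\eps(\D)} \lesssim \eps^{-1}\|u-u_h\|_{L^2(\D)}^2 \le \eps^{-1}\eta(\eps)\eps^{d/2}M\,\eps\|u-u_h\|_{H^1_\eps(\D)}$, which is $\lesssim M\|u-u_h\|_{H^1_\eps(\D)}$ by Theorem~\ref{thrm:loc-existence:u_h}.

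For the horizontal part we apply the coercivity \eqref{eq:secE-strong-coercive}: $\eta(\eps)\|e_\perp\|^2_{H^1_\eps(\D)} \le \langle (E''(u)-\lambda\calI)e_\perp, e_\perp\rangle$. Replacing $e_\perp = e_h - \gamma u$, the $\gamma$-part is handled by Lemma~\ref{lem:est-secE-firstE} and the bound on $\gamma$. For the main term, the Galerkin orthogonality \eqref{galerkin-orth} with test function $e_\perp$ (admissible, since $e_\perp\in V_h\cap T_{\ci u}\S$) yields
\[
\langle (E''(u)-\lambda\calI)e_h, e_\perp\rangle = \langle (E''(u)-\lambda\calI)(u_h-u), e_\perp\rangle .
\]
By \eqref{eq:lemma-Epp-difference} this equals $(\lambda_h-\lambda)(u_h-u,e_\perp)_{L^2(\D)} - \langle R(u)(u-u_h),e_\perp\rangle$, where we used $(u,e_\perp)=0$. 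The remainder is treated as in Conclusion~\ref{conc:ev-estimate}: bound \eqref{remainder-bound}, Hölder, the Gagliardo--Nirenberg inequalities \eqref{gag-ni-L4} and \eqref{gag-ni-L3}, and the $K^M_\eps$-bounds \eqref{abstract-bounds} and \eqref{abstract-bounds-L3}. For example, $\eps^{-2}\|u-u_h\|_{L^4}^2\|e_\perp\|_{L^2}$ is estimated by $\eps^{-2}\eta\eps^{d/4}M\,\eps^{1-d/4}\|u-u_h\|_{H^1_\eps}\,\eps\|e_\perp\|_{H^1_\eps}$, i.e.\ by $\eta M\|u-u_h\|_{H^1_\eps}\|e_\perp\|_{H^1_\eps}$, and the cubic terms work the same way with an extra factor $M\eta$. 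After dividing by $\eta(\eps)\|e_\perp\|_{H^1_\eps(\D)}$, these contributions are $\lesssim M\|u-u_h\|_{H^1_\eps(\D)}$.

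The main obstacle is the eigenvalue term $\eta^{-1}|\lambda_h-\lambda|\,\eps\|u_h-u\|_{L^2(\D)}$. We insert Conclusion~\ref{conclusion:ev-estimate-sharper}, using version \eqref{conclusion:ev-estimate-sharper-est-v1}. The term $\|u-u_h\|^2_{H^1_\eps}$ times $\eta^{-1}\eps\|u-u_h\|_{L^2}\le \eps^{1+d/2}M$ is fine. The terms with $\eps^{-2}\|u-u_h\|_{L^4}^2$ become $\tfrac{1}{\eta^2\eps^{2}}\|u-u_h\|_{L^4}^2\|u-u_h\|_{L^2}$, and inserting $\|u-u_h\|_{L^4}\lesssim \eps^{1-d/4}\|u-u_h\|_{H^1_\eps}$ together with \eqref{abstract-bounds} gives $\lesssim M\|u-u_h\|_{H^1_\eps}$, provided the powers of $\eps$ and $\eta$ cancel; checking this bookkeeping is the delicate step. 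The term $\tfrac{h}{\eps}\|P_hu-u\|_{H^1_\eps}$ requires $\|P_hu-u\|_{H^1_\eps}\le \|u-u_h\|_{H^1_\eps}+\|e_h\|_{H^1_\eps}$. The resulting $\|e_h\|$-contribution carries the small factor $h/(\eps^2\eta^2)\cdot\eps^{d/2}\eta M\lesssim M$ under the mesh condition, and is then absorbed into the left-hand side. If the powers fail to match with \eqref{conclusion:ev-estimate-sharper-est-v1}, we switch to \eqref{conclusion:ev-estimate-sharper-est-v2}, whose extra factor $h$ compensates. Combining the two components and choosing $M^\ast$ small finally gives $\|e_h\|_{H^1_\eps(\D)}\lesssim M\|u-u_h\|_{H^1_\eps(\D)}$.
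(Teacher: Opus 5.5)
There is a genuine error at the central step. The function $e_\perp=e_h-\gamma u$ is \emph{not} in $V_h$, because the exact ground state $u$ is not a finite element function. So your claim that $e_\perp\in V_h\cap T_{\ci u}\S$ is false, and $e_\perp$ is not an admissible test function in \eqref{galerkin-orth}. In fact
\[
\langle (E''(u)-\lambda\calI)e_h,e_\perp\rangle-\langle (E''(u)-\lambda\calI)(u_h-u),e_\perp\rangle=-\gamma\,\tfrac{2\beta}{\eps^2}\,(|u|^2u,\,u-P_hu)_{L^2(\D)},
\]
which is generally nonzero. More seriously, \eqref{eq:lemma-Epp-difference} comes from the discrete equation $\langle E'(u_h),v_h\rangle=\lambda_h(u_h,v_h)_{L^2(\D)}$, which holds only for $v_h\in V_h\cap T_{\ci u}\S$. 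You therefore have no way to express $\langle (E''(u)-\lambda\calI)(u_h-u),e_\perp\rangle$ through $\lambda_h-\lambda$ and $R(u)$.

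The route can be repaired by keeping the discrete function $e_h$ as test function. Set $A:=E''(u)-\lambda\calI$ (symmetric) and apply \eqref{galerkin-orth} only to $\langle Ae_h,e_h\rangle$. This gives
\[
\eta(\eps)\|e_\perp\|_{H^1_\eps(\D)}^2\le\langle Ae_\perp,e_\perp\rangle=\langle A(u_h-u),e_h\rangle-2\gamma\langle Au,e_h\rangle+\gamma^2\langle Au,u\rangle .
\]
Now \eqref{eq:lemma-Epp-difference} with $v_h=e_h$ is legitimate, where $(u_h,e_h)_{L^2(\D)}=(u_h-u,e_h)_{L^2(\D)}+\gamma$ and $\gamma=-\tfrac12\|u-u_h\|_{L^2(\D)}^2$. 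This produces three terms you did not account for:
\begin{enumerate}
\item $|\lambda_h-\lambda|\,\|u-u_h\|_{L^2(\D)}^2\lesssim\eta(\eps)M^2\|u-u_h\|_{H^1_\eps(\D)}^2$. This uses \eqref{conclusion:ev-estimate-sharper-est-v1}, the mesh condition, and $\|u-P_hu\|_{H^1_\eps(\D)}\lesssim\|u-u_h\|_{H^1_\eps(\D)}$. The last bound follows from Lemma~\ref{lem:H1-ritz-proj-est} since $u_h\in V_h$, and is simpler than your triangle-inequality detour.
\item $|\gamma\langle Au,e_h\rangle|\lesssim\eps^{-1}\|u-u_h\|_{L^2(\D)}^2\|e_h\|_{H^1_\eps(\D)}$, by Lemma~\ref{lem:est-secE-firstE}.
\item $\gamma^2\langle Au,u\rangle\lesssim\eps^{-2}\|u-u_h\|_{L^2(\D)}^4$.
\end{enumerate}
All three are of the right size after division by $\eta(\eps)$. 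The right-hand side now contains $\|e_h\|_{H^1_\eps(\D)}$ and terms not multiplied by $\|e_\perp\|_{H^1_\eps(\D)}$. So you close with $\|e_h\|_{H^1_\eps(\D)}\le\|e_\perp\|_{H^1_\eps(\D)}+|\gamma|\,\|u\|_{H^1_\eps(\D)}$ and Young's inequality, instead of dividing.

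The rest of your bookkeeping is right, and \eqref{conclusion:ev-estimate-sharper-est-v1} suffices, so the fallback is unnecessary. The critical eigenvalue contribution satisfies $\tfrac{1}{\eta(\eps)^2\eps^2}\|u-u_h\|_{L^4(\D)}^2\|u-u_h\|_{L^2(\D)}\lesssim\eps^{d/2-1}M^2\|u-u_h\|_{H^1_\eps(\D)}$, and the cubic one carries the factor $\eta(\eps)\eps^{7d/12-1}M^3$.

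Once repaired, your argument is a genuinely different route from the paper's. The paper starts from the G{\aa}rding inequality \eqref{eq:secE-weak-coercive} for $u_h-P_hu$ and runs a Schatz duality argument with an auxiliary $\xi\in H_u\S$. Its test function $u_h-P_hu+P_h\xi$ is discrete, so \eqref{eq:lemma-Epp-difference} applies directly. In exchange it needs the $H^2$-bound of Lemma~\ref{H2-lemma} for the term involving $\xi-P_h\xi$, and the dual stability bound $\|\xi\|_{H^1_\eps(\D)}\lesssim\eta(\eps)^{-1}\|u_h-P_hu\|_{H^1_\eps(\D)}$.

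You instead apply the strong coercivity \eqref{eq:secE-strong-coercive} directly to the horizontal component. This removes the dual problem and the regularity argument from this step. The factor $\eta(\eps)^{-1}$ you pay plays the same role as the paper's dual stability bound, and it is compensated by the $\eta(\eps)$ built into \eqref{abstract-bounds}. Both routes still rest on Conclusion~\ref{conclusion:ev-estimate-sharper}, whose proof uses the regularity of the auxiliary solution $z_u$.
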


\begin{proof}
We start from the G{\aa}rding inequality \eqref{eq:secE-weak-coercive} to estimate
\begin{eqnarray}
\label{garding-once-more}
\tfrac{1}{2}\, \| u_h-P_h u \|_{H^1_{\eps}(\D)}^2 &\le&  \langle (E^{\prime\prime}(u)  -\lambda \mathcal{I}) (u_h-P_h u) , u_h-P_h u \rangle 
\;+\;
\tilde{c}_1 \,\tfrac{1}{\eps^2}\,\| u_h-P_h u \|_{L^2(\D)}^2.
\end{eqnarray}
Using a Schatz argument, we let $\xi \in H_{u}\S$ denote the unique solution to
\begin{align*}
\langle (E^{\prime\prime}(u)  -\lambda \mathcal{I})\xi , v \rangle  \,\,\, = \,\,\, \tilde{c}_1 \,\tfrac{1}{\eps^2} ( u_h-P_h u , v)_{L^2(\D)} \qquad
\mbox{for all } v\in H_u\S.
\end{align*}
Hence, $P_h\xi \in V_h \cap H_{u}\S$, and we have the stability estimate
 \begin{align}
 \label{xi-est}
\nonumber\tfrac{1}{\eps} \| P_h\xi \|_{L^2(\D)} \,\,\,&\le\,\,\, \| P_h\xi \|_{H^1_{\eps}(\D)} 
 \,\,\,\overset{\eqref{Ph-Honeeps-stability}}{\lesssim} \,\,\, \| \xi \|_{H^1_{\eps}(\D)} \\
 \,\,\,&\lesssim \,\,\, \tfrac{\eps}{\eta(\eps) \eps^2} \| u_h - P_h u \|_{L^2(\D)} 
  \,\,\,\lesssim \,\,\, \tfrac{1}{\eta(\eps)} \| u_h - P_h u \|_{H^1_{\eps}(\D)}.
 \end{align} 
Since the defect $u_h - P_h u$ is not an admissible test function for the dual problem, we need to project it into the horizontal space. For that, we recall from the proof of Lemma \ref{lem:est-nl-term} the decomposition 
\begin{align*}
u_h - u  \,\,=\,\, \alpha_{\perp}  \, u + e_{\perp}
\qquad
\mbox{where}
\quad
e_{\perp} \in H_{u} \S \mbox{ and } |\alpha_{\perp} | \le \| u-u_h\|_{L^2(\D)}^2.
\end{align*}
Since $u-P_h u \in H_{u} \S $, we obtain
\begin{align*}
u_h - P_h u   \,\,=\,\, \alpha_{\perp}  \, u + \tilde{e}_{\perp}
\qquad
\mbox{where}
\quad
\tilde{e}_{\perp} := e_{\perp} +  (u - P_h u)  \in H_{u} \S.
\end{align*}
Hence, with $\tilde{e}_{\perp} \in H_{u} \S$ as test function in the dual problem, we have 
\begin{eqnarray*}
\lefteqn{ \tilde{c}_1 \tfrac{1}{\eps^2}\,\| u_h-P_h u \|_{L^2(\D)}^2 \,\,\,=\,\,\, 
\tilde{c}_1 \, \tfrac{ \alpha_{\perp}  }{\eps^2}\, (  u_h-P_h u , u )_{L^2(\D)} 
+ \tilde{c}_1\, \tfrac{1}{\eps^2}\, (  u_h-P_h u , \tilde{e}_{\perp}  )_{L^2(\D)} } \\
&=& 
\tilde{c}_1\, \tfrac{ \alpha_{\perp}  }{\eps^2}\, (  u_h-P_h u , u )_{L^2(\D)}  + 
\, \langle (E^{\prime\prime}(u)  -\lambda \mathcal{I})\xi , \tilde{e}_{\perp} \rangle  \\
&=& \underbrace{ \tilde{c}_1\, \tfrac{ \alpha_{\perp}  }{\eps^2}\, (  u_h-P_h u , u )_{L^2(\D)} 
 -  \alpha_{\perp} \langle (E^{\prime\prime}(u)  -\lambda \mathcal{I})\xi , u  \rangle }_{=: \mbox{III}} + 
 \langle (E^{\prime\prime}(u)  -\lambda \mathcal{I})\xi ,u_h-P_h u  \rangle .
\end{eqnarray*}
From this identity and the G{\aa}rding estimate \eqref{garding-once-more} we can bound $\| u_h-P_h u \|_{H^1_{\eps}(\D)}^2$ as
\begin{eqnarray*}
\lefteqn{ \tfrac{1}{2}\, \| u_h-P_h u \|_{H^1_{\eps}(\D)}^2 \,\,\,\le\,\,\,  \langle (E^{\prime\prime}(u)  -\lambda \mathcal{I}) (u_h-P_h u + \xi ) , u_h-P_h u \rangle \,+\, \mbox{III}} \\
&=& \langle (E^{\prime\prime}(u)  -\lambda \mathcal{I}) ( \xi -P_h\xi ) , u_h-P_h u \rangle +   \langle (E^{\prime\prime}(u)  -\lambda \mathcal{I}) (u_h-P_h u + P_h\xi ) , u_h-P_h u \rangle 
 \,+\, \mbox{III}
\\
&\overset{\eqref{galerkin-orth}}{=}&  \underbrace{\langle (E^{\prime\prime}(u)  -\lambda \mathcal{I}) ( \xi -P_h\xi ) , u_h-P_h u \rangle}_{=:\mbox{I}} +  \underbrace{ \langle (E^{\prime\prime}(u)  -\lambda \mathcal{I}) (u_h-P_h u + P_h\xi ) , u_h - u \rangle }_{=: \mbox{II}}  \,+\, \mbox{III}.
\end{eqnarray*}
We now estimate the terms on the right hand side individually. For the first term we can proceed as in the proof of Lemma \ref{lem:H1-ritz-proj-est}, where the continuity of $E^{\prime\prime}(u)  -\lambda \mathcal{I}$ (cf. \eqref{continuity-constrained-Hessian}), the projection estimates in Lemma \ref{lem:H1-ritz-proj-est} and the regularity estimates in Lemma \ref{H2-lemma} yield
\begin{eqnarray*}
|\mbox{I}| &\lesssim& \| \xi -P_h\xi \|_{H^1_{\eps}(\D)}  \| u_h-P_h u \|_{H^1_{\eps}(\D)} \,\,\, \lesssim \,\,\,  h  \, \| D^2\xi \|_{L^2(\D)}  \| u_h-P_h u \|_{H^1_{\eps}(\D)}\\
&\lesssim& h  \, \eta(\eps)^{-1} \,\tfrac{1}{\eps^2} \| u_h-P_h u  \|_{L^2(\D)}  \| u_h-P_h u \|_{H^1_{\eps}(\D)} \,\,\, \lesssim \,\,\, \tfrac{h}{\eps \eta(\eps)}  \| u_h-P_h u \|_{H^1_{\eps}(\D)}^2 \\
&\overset{h\lesssim \eps \eta(\eps) \eps^{d/2} M }{\lesssim}& \eps^{d/2}  M \| u_h-P_h u \|_{H^1_{\eps}(\D)}^2.
\end{eqnarray*}
To treat the second term, we will use the following identity which shows, by means of Lemma \ref{lem:def-ritz-proj}, that $(u_h , u_h-P_h u + P_h\xi )_{L^2(\D)} $ is a higher order term. We have
\begin{eqnarray}
\label{step-proof-def-aux-1}
\nonumber \lefteqn{(u_h , u_h-P_h u + P_h\xi )_{L^2(\D)} \,\,\,\overset{P_h\xi\in H_u \S}{=}\,\,\,
(u_h - u , u_h-P_h u + P_h\xi )_{L^2(\D)} + ( u , u_h-P_h u  )_{L^2(\D)} }  \\
\nonumber&=& (u_h - u , u_h-P_h u )_{L^2(\D)} + (u_h - u , P_h\xi )_{L^2(\D)}  + ( u , (u_h-u)+ (u - P_h u)  )_{L^2(\D)}  \\
\nonumber&\overset{u - P_h u\in H_u \S}{=}&  (u_h - u , u_h-P_h u )_{L^2(\D)} + (u_h - u , P_h\xi )_{L^2(\D)}  + ( u , u_h-u )_{L^2(\D)} \\
\nonumber&\overset{u,u_h\in\S}{=}& (u_h - u , u_h-P_h u )_{L^2(\D)} + (u_h - u , P_h\xi )_{L^2(\D)} \\
\nonumber&\enspace&\quad - \tfrac{1}{2} \left(\| u\|^2_{L^2(\D)} - 2( u , u_h)_{L^2(\D)} + \| u_h \|^2_{L^2(\D)} \right) \\
&=& (u_h - u , u_h-P_h u )_{L^2(\D)} + (u_h - u , P_h\xi )_{L^2(\D)}   -\tfrac{1}{2} \| u - u_h \|_{L^2(\D)}^2.
\end{eqnarray}
We can now turn to the second term. Using Lemma \ref{lem:identity-lam-lamh} (and the symmetry of $E^{\prime\prime}(u) - \lambda \calI$)  to obtain 
\begin{eqnarray*}
|\mbox{II}|
&\overset{\eqref{eq:lemma-Epp-difference}}{=}& \left| (\lambda_h - \lambda)\,(u_h , u_h-P_h u + P_h\xi )_{L^2(\D)} - \langle R(u) (u-u_h) , u_h-P_h u + P_h\xi \rangle \right| \\
&\overset{\eqref{step-proof-def-aux-1}}{=}& 
\left| (\lambda_h - \lambda)\, \left( (u_h - u , u_h-P_h u )_{L^2(\D)} + (u_h - u , P_h\xi )_{L^2(\D)}   -\tfrac{1}{2} \| u - u_h \|_{L^2(\D)}^2 \right) \right. \\
&\enspace& \quad \left. - \langle R(u) (u-u_h) , u_h-P_h u + P_h\xi \rangle \right| \\
&\overset{\eqref{remainder-bound},\eqref{xi-est}}{\lesssim}&  |\lambda_h - \lambda| \cdot \left( \| u_h- u \|_{L^2(\D)} \left( \| u_h-P_h u \|_{L^2(\D)} +  \| u_h- u \|_{L^2(\D)} + \| P_h\xi  \|_{L^2(\D)} \right) \right)\\
&\enspace&  + \tfrac{1}{\eps^2} \int_{\D} \left( |u-u_h|^2 + |u-u_h|^3 \right) |u_h-P_h u + P_h\xi| \dx.
\end{eqnarray*}
We split the right hand into four contributions such that $|\mbox{II}| \le \mbox{II}_1+\mbox{II}_2+\mbox{II}_3+\mbox{II}_4$, which are given by
\begin{eqnarray*}
\mbox{II}_1 &:=& |\lambda_h - \lambda| \, \| u_h- u \|_{L^2(\D)} \| u_h-P_h u \|_{L^2(\D)}, \qquad
\mbox{II}_2 \,\,\,:=\,\,\, |\lambda_h - \lambda| \, \| u_h- u \|_{L^2(\D)}^2, \\
\mbox{II}_3 &:=& |\lambda_h - \lambda| \, \| u_h- u \|_{L^2(\D)} \| P_h\xi \|_{L^2(\D)}, \\
\mbox{II}_4 &:=& \tfrac{1}{\eps^2} \int_{\D} \left( |u-u_h|^2 + |u-u_h|^3 \right) |u_h-P_h u + P_h\xi| \dx.
\end{eqnarray*}
From now on we will repeatedly exploit the bounds from Theorem \ref{thrm:loc-existence:u_h}, i.e., $\| (\lambda,u) - (\lambda_h,u_h) \|_{\eps} \le \tfrac{\eta(\eps)}{\eps} \eps^{d/4} M$,\,\, $\| u- u_h \|_{L^2(\D)} \le \eta(\eps) \eps^{d/2} M$ and $\| u- u_h \|_{L^4(\D)} \le \eta(\eps) \eps^{d/4} M$ without further mentioning.

We start with $\mbox{II}_1$. Using Conclusion \ref{conclusion:ev-estimate-sharper} we have
\begin{eqnarray*}
\mbox{II}_1 &\overset{\eqref{conclusion:ev-estimate-sharper-est-v1}}{\lesssim}& \| u_h-u\|_{H^1_{\eps}(\D)}^2  \| u_h- u \|_{L^2(\D)} \| u_h-P_h u \|_{L^2(\D)} \\
&\enspace&+
\tfrac{h}{\eps^2 \eta(\eps)}  \| P_h u - u  \|_{H^1_{\eps}(\D)} \| u_h- u \|_{L^2(\D)} \| u_h-P_h u \|_{L^2(\D)} \\
&\enspace&+ \tfrac{1}{\eps^3 \eta(\eps)} \| u-u_h \|_{L^4(\D)}^2 \| u_h- u \|_{L^2(\D)} \| u_h-P_h u \|_{L^2(\D)} \\
&\enspace&+ \tfrac{1}{\eps^3 \eta(\eps)}  \eps^{-d/6}  \| u-u_h \|_{L^4(\D)}^3  \| u_h- u \|_{L^2(\D)} \| u_h-P_h u \|_{L^2(\D)} \\
&\overset{ h \lesssim \eps \eta(\eps) \eps^{d/2} M }{\lesssim}&  \eps^{d/2}  \eta(\eps)^2 M^2 \| u_h-u\|_{H^1_{\eps}(\D)} \| u_h-P_h u \|_{H^1_{\eps}(\D)} \\
&\enspace&+ \,
  \eta(\eps) \, \eps^{d} \,  M^2 \, \| u_h- u \|_{H^1_{\eps}(\D)} \| u_h-P_h u \|_{H^1_{\eps}(\D)} \\
&\enspace&+ \,\eta(\eps)\, \eps^{d/2\,-\,1} M^2  \| u_h- u \|_{H^1_{\eps}(\D)} \| u_h-P_h u \|_{H^1_{\eps}(\D)} \\
&\enspace&+ \eta(\eps)^2 \eps^{7d/12\,-\,1} M^3  \| u_h- u \|_{H^1_{\eps}(\D)} \| u_h-P_h u \|_{H^1_{\eps}(\D)} \\
&\overset{\tfrac{1}{2}<\tfrac{7}{12}}{\lesssim}&\eps^{d/2\,-\,1} M^2\, \| u_h- u \|_{H^1_{\eps}(\D)} \| u_h-P_h u \|_{H^1_{\eps}(\D)}.
\end{eqnarray*}
Next, we estimate $\mbox{II}_2 = |\lambda_h - \lambda| \, \| u_h- u \|_{L^2(\D)}^2$, where use again Conclusion \ref{conclusion:ev-estimate-sharper} to get
\begin{eqnarray*}
\mbox{II}_2 &\overset{\eqref{conclusion:ev-estimate-sharper-est-v1}}{\lesssim}& \| u_h-u\|_{H^1_{\eps}(\D)}^2 \| u_h- u \|_{L^2(\D)}^2
\,\,+\,\,
\tfrac{h}{\eps^2 \eta(\eps)}  \| P_h u - u  \|_{H^1_{\eps}(\D)} \| u_h- u \|_{L^2(\D)}^2 \\
&\enspace&+ \tfrac{1}{\eps^3 \eta(\eps)} \| u-u_h \|_{L^4(\D)}^2 \| u_h- u \|_{L^2(\D)}^2 
\,\,+ \,\, \tfrac{1}{\eps^3 \eta(\eps)}  \eps^{-d/6}  \| u-u_h \|_{L^4(\D)}^3 \| u_h- u \|_{L^2(\D)}^2 \\
&\overset{ h \lesssim \eps \eta(\eps) \eps^{d/2} M}{\lesssim}&  \eta(\eps)^2 \eps^{d} M^2 \| u_h-u\|_{H^1_{\eps}(\D)}^2 
\,\,+\,\,\eta(\eps) \eps^d M^2 \| u_h- u \|_{H^1_{\eps}(\D)}^2 \\
&\enspace& +\eta(\eps) \eps^{d/2\,-\,1} M^2 \| u_h- u \|_{H^1_{\eps}(\D)}^2 
\,\,+ \,\, \eta(\eps)^2 \eps^{7d/12\,-\,1} M^3 \| u_h- u \|_{H^1_{\eps}(\D)}^2   \\
&\overset{M,\eta(\eps)\lesssim 1}{\lesssim}&
\eta(\eps) \, M^2 \, \| u_h- u \|_{H^1_{\eps}(\D)}^2.
\end{eqnarray*}
We turn to $\mbox{II}_3 = |\lambda_h - \lambda| \, \| u_h- u \|_{L^2(\D)} \| P_h\xi \|_{L^2(\D)}$. 
Using Conclusion \ref{conclusion:ev-estimate-sharper} estimate \eqref{conclusion:ev-estimate-sharper-est-v2} together with $\| P_h u - u  \|_{H^1_{\eps}(\D)}\lesssim \tfrac{h}{\eps^2}$, we obtain 
\begin{eqnarray*}
\mbox{II}_3 &\lesssim& \| u_h-u\|_{H^1_{\eps}(\D)}^2 \, \| u_h- u \|_{L^2(\D)} \| P_h\xi \|_{L^2(\D)} 
\,\,+\,\, \tfrac{h^2}{\eps^4 \eta(\eps)}  \, \| u_h- u \|_{L^2(\D)} \| P_h\xi \|_{L^2(\D)} \\
&\enspace& + \tfrac{h}{\eps^3  \eta(\eps)}  \| u-u_h \|_{L^4(\D)}^2  \, \| u_h- u \|_{L^2(\D)} \| P_h\xi \|_{L^2(\D)} \\
&\enspace&+ \tfrac{h}{\eps^{3+d/6} \eta(\eps)} \| u-u_h \|_{L^4(\D)}^3  \, \| u_h- u \|_{L^2(\D)} \| P_h\xi \|_{L^2(\D)} \\
&\enspace&+ \tfrac{1}{\eps^4 \eta(\eps)}  \| u-u_h \|_{L^2(\D)}^3  \, \| P_h\xi \|_{L^2(\D)} 
 \,\,+\,\,  \tfrac{1}{\eps^4 \eta(\eps)}  \| u-u_h \|_{L^3(\D)}^3 \, \| u_h- u \|_{L^2(\D)} \| P_h\xi \|_{L^2(\D)} \\
 &\overset{\eqref{xi-est}}{\lesssim}& 
  \| u_h-u\|_{H^1_{\eps}(\D)}^2 \, \| u_h- u \|_{L^2(\D)}  \tfrac{\eps}{\eta(\eps)} \| u_h - P_h u \|_{H^1_{\eps}(\D)} \\
&\enspace& + \left( \tfrac{h^2\eps}{\eps^4 \eta(\eps)^2}  + \tfrac{h\eps}{\eps^3  \eta(\eps)^2}  \| u-u_h \|_{L^4(\D)}^2  \right) \, \| u_h- u \|_{L^2(\D)} \| u_h - P_h u \|_{H^1_{\eps}(\D)} \\
&\enspace&+ \tfrac{h}{\eps^{2+d/6} \eta(\eps)^2} \| u-u_h \|_{L^4(\D)}^3  \, \| u_h- u \|_{L^2(\D)}  \| u_h - P_h u \|_{H^1_{\eps}(\D)} \\
&\enspace&+ \tfrac{\eps}{\eps^4 \eta(\eps)^2} \left( \| u-u_h \|_{L^2(\D)}^3 
 \,+\,   \| u-u_h \|_{L^3(\D)}^3 \, \| u_h- u \|_{L^2(\D)} \right) \| u_h - P_h u \|_{H^1_{\eps}(\D)} \\
&\overset{ h \lesssim \eps \eta(\eps) \eps^{d/2} M}{\lesssim}& \| u_h-u\|_{H^1_{\eps}(\D)}^2  \eps^{1+d/2} \| u_h - P_h u \|_{H^1_{\eps}(\D)}
\,\,+\,\, \eps^{d-1} M^2  \, \| u_h- u \|_{L^2(\D)} \, \| u_h - P_h u \|_{H^1_{\eps}(\D)}  \\
&\enspace& + \tfrac{1}{\eta(\eps)}  \eps^{d/2-1} M \| u-u_h \|_{L^4(\D)}^2  \, \| u_h- u \|_{L^2(\D)} \,\| u_h - P_h u \|_{H^1_{\eps}(\D)} \\
&\enspace&+  \tfrac{1}{\eta(\eps)}  \eps^{d/3-1} M \| u-u_h \|_{L^4(\D)}^3  \, \| u_h- u \|_{L^2(\D)} \, \| u_h - P_h u \|_{H^1_{\eps}(\D)}  \\
&\enspace&+ \tfrac{1}{\eta(\eps)^2}  \eps^{-3} \left( \| u-u_h \|_{L^2(\D)}^3  + \| u-u_h \|_{L^3(\D)}^3 \, \| u_h- u \|_{L^2(\D)} \right) \, \| u_h - P_h u \|_{H^1_{\eps}(\D)}  \\
&\overset{\eqref{abstract-bounds},\eqref{abstract-bounds-L3}}{\lesssim}& \eps^{d-2} M^2 \,\| u_h- u \|_{H^1_{\eps}(\D)}  \| u_h - P_h u \|_{H^1_{\eps}(\D)}.
\end{eqnarray*}
It remains to bound $\mbox{II}_4$. Here we obtain with the abstract bounds \eqref{abstract-bounds} and \eqref{abstract-bounds-L3} as well as with the Gagliardo-Nirenberg inequalities that
\begin{eqnarray*}
\mbox{II}_4 &=& \tfrac{1}{\eps^2} \int_{\D} \left( |u-u_h|^2 + |u-u_h|^3 \right) |u_h-P_h u + P_h\xi| \dx \\
&\lesssim& 
 \tfrac{1}{\eps^2}  \| u-u_h \|_{L^4(\D)}^2 (  \| u_h-P_h u\|_{L^2(\D)} + \| P_h\xi\|_{L^2(\D)}  ) \\
 &\enspace&\quad +  \tfrac{1}{\eps^2}  \| u-u_h \|_{L^4(\D)}^3  (  \| u_h-P_h u\|_{L^3(\D)} + \| P_h\xi\|_{L^3(\D)} ) \\
 &\overset{\eqref{gag-ni-L4},\eqref{gag-ni-L3}}{\lesssim}& 
\eta(\eps) \eps^{-1-d/4} \eps^{d/4} M  \| u-u_h \|_{H^1_{\eps}(\D)} \left(  \eps \| u_h-P_h u\|_{H^1_{\eps}(\D)} + \tfrac{\eps}{\eta(\eps)} \| u_h-P_h u\|_{H^1_{\eps}(\D)} \right) \\
 &\enspace&\quad +  \eta(\eps)^2 M^2  \eps^{d/4-1} \| u-u_h \|_{H^1_{\eps}(\D)}   \left( \eps^{1-d/6} \| u_h-P_h u\|_{H^1_{\eps}(\D)} + \eps^{1-d/6} \| P_h\xi \|_{H^1_{\eps}(\D)}  \right) \\
 &\overset{\eqref{xi-est}}{\lesssim}& 
 M  \| u-u_h \|_{H^1_{\eps}(\D)}  \| u_h-P_h u\|_{H^1_{\eps}(\D)}  \\& \enspace \quad &  +M^2  (\eta(\eps)^2\eps^{d/12}+\eta(\eps)\eps^{1+d/12}) \| u-u_h \|_{H^1_{\eps}(\D)} \| u_h - P_h u \|_{H^1_{\eps}(\D)}  \\
&\lesssim& M  \| u-u_h \|_{H^1_{\eps}(\D)}  \| u_h-P_h u\|_{H^1_{\eps}(\D)} .
\end{eqnarray*}
It remains to bound $\mbox{III} = \tilde{c}_1\, \tfrac{ \alpha_{\perp}  }{\eps^2}\, (  u_h-P_h u , u )_{L^2(\D)} 
 -  \alpha_{\perp} \langle (E^{\prime\prime}(u)  -\lambda \mathcal{I})\xi , u  \rangle$. Using $ |\alpha_{\perp} | \le \| u-u_h\|_{L^2(\D)}^2$ we have
 \begin{eqnarray*}
 |\mbox{III}| &\lesssim& \tfrac{1}{\eps^2}  \| u_h-P_h u\|_{L^2(\D)} \| u-u_h\|_{L^2(\D)}^2 +\tfrac{1}{\eps}  \| \xi \|_{H^1_{\eps}(\D)} \| u-u_h\|_{L^2(\D)}^2
 \end{eqnarray*}
With the bound $\| u- u_h \|_{L^2(\D)} \le \eta(\eps) \eps^{d/2} M$ from Theorem \ref{thrm:loc-existence:u_h} and the stability bound $ \| \xi \|_{H^1_{\eps}(\D)} \le   \tfrac{1}{\eta(\eps)} \| u_h - P_h u \|_{H^1_{\eps}(\D)}$ we conclude
 \begin{eqnarray*}
 |\mbox{III}| &\lesssim&  \eta(\eps) \eps^{d/2} M \| u_h-P_h u\|_{H^1_{\eps}(\D)} \| u-u_h\|_{H^1_{\eps}(\D)} + \eps^{d/2} M \| u_h - P_h u \|_{H^1_{\eps}(\D)} \| u-u_h\|_{H^1_{\eps}(\D)} \\
 &\lesssim& \eps^{d/2} M \| u_h - P_h u \|_{H^1_{\eps}(\D)} \| u-u_h\|_{H^1_{\eps}(\D)}.
 \end{eqnarray*}
Combining the estimates for $\mbox{I}$, $\mbox{II}_1$, $\mbox{II}_2$, $\mbox{II}_3$, $\mbox{II}_4$ and $\mbox{III}$ and dropping the lower order terms (in $\eps$ or $M$) we obtain 
\begin{eqnarray*}
\lefteqn{  \| u_h-P_h u \|_{H^1_{\eps}(\D)}^2  }\\
&\lesssim& 
\eps^{d/2}\, \| u_h-P_h u\|_{H^1_{\eps}(\D)}^2  
+\,\,
M^2 \, \| u_h- u \|_{H^1_{\eps}(\D)}^2
+
\eps^{d/2} M \| u-u_h \|_{H^1_{\eps}(\D)}  \| u_h-P_h u\|_{H^1_{\eps}(\D)} .
\end{eqnarray*}
Using the Young inequality on the last term to absorb the $\| u_h-P_h u\|_{H^1_{\eps}(\D)} $-contributions into the left hand side, we end up with 
\begin{eqnarray*}
 \| u_h-P_h u \|_{H^1_{\eps}(\D)}^2 
&\lesssim&
M^2 \| u-u_h \|_{H^1_{\eps}(\D)}^2.
\end{eqnarray*}
Taking the square root proves the result.
\end{proof}

\subsection{Error estimates for discrete minimizers}
\label{subsec:combination-estimates}

We are now ready to combine the previous findings.

\begin{theorem}\label{thm:main-H1-est}
Assume \ref{A1}-\ref{A3} and let $u \in \S$ be a quasi-isolated ground state with eigenvalue $\lambda$. Then, there exists a constant $c^{\ast}>0$ (independent of $h$ and $\eps$) such if the mesh size fulfills $h \,\le  c^{\ast}\, \eps \, \eta(\eps)\,\eps^{d/2}$, then there exists unique discrete minimizer $(\lambda_h ,u_h) \in \R \times (V_h \cap T_{\ci u} \S)$ such that 
\begin{align*}
u_h\in \S \qquad \mbox{and } \| (\lambda,u) - (\lambda_h,u_h) \|_{\eps} \le \tfrac{\eta(\eps)}{\eps} \eps^{d/4}
\end{align*}
and satisfying the discrete first- and second-order conditions
\begin{align*}
\langle E^{\prime}(u_h) , v_h \rangle \,\, = \,\, \lambda_h \, (u_h , v_h)_{L^2(\D)} \qquad \mbox{for all } v_h \in V_h
\end{align*}
and
\begin{align*}
\langle (E^{\prime\prime}(u_h) -\lambda_h \mathcal{I}) v_h , v_h \rangle \,\,\, \gtrsim \,\,\, \eta(\eps) \, \| v_h \|_{H^1_{\eps}(\D)}^2 \qquad \mbox{for all } v_h \in V_h \cap H_{u} \S,
\end{align*}
Furthermore, the error is bounded by
\begin{align*}
\| u - u_h \|_{H^1_{\eps}(\D)} 
\,\,\,\lesssim \,\,\,  \inf_{v_h \in V_h} \| u - v_h \|_{H^1_{\eps}(\D)}
\,\,\,\lesssim \,\,\, \tfrac{h}{\eps^2}.
\end{align*}
\end{theorem}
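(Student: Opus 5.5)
The proof assembles the results of Sections~\ref{subsec:ritz-estimates}--\ref{subsec:defect-estimates}. First, fix $M := \min\{M^\ast_{\mathrm{Thm}}, M^\ast_{\mathrm{Lem}}, 1\}$, where $M^\ast_{\mathrm{Thm}}$ is the threshold from Theorem~\ref{thrm:loc-existence:u_h} and $M^\ast_{\mathrm{Lem}}$ the one from Lemma~\ref{lem:defect-est}. We shrink this value further below so that the constant $C M$ in the defect estimate is at most $\tfrac12$. Since $M$ is now an absolute constant independent of $h$ and $\eps$, the resolution condition $h \le c^\ast\,\eps\,\eta(\eps)\,\eps^{d/2} M$ of Theorem~\ref{thrm:loc-existence:u_h} becomes $h \le c^\ast\,\eps\,\eta(\eps)\,\eps^{d/2}$ after renaming $c^\ast$. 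Because $\eta(\eps)\lesssim 1$ and $\eps^{d/2}\le 1$, this also implies the weaker conditions $h\lesssim \eps\,\eta(\eps)$ and $h\lesssim\eps$. Hence Lemma~\ref{lem:H1-ritz-proj-est}, Conclusion~\ref{conc:H1-ritz-proj-est-gs} and \eqref{best-approx-Tiu-vs-full} are all available.

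Applying Theorem~\ref{thrm:loc-existence:u_h} with this $M$ gives a discrete critical pair $(\lambda_h,u_h)\in\R\times(V_h\cap\S\cap T_{\ci u}\S)$. It satisfies the first-order condition on all of $V_h$, it satisfies the discrete second-order condition with constant $\gtrsim\eta(\eps)$ on $V_h\cap H_u\S$, and it obeys the bound $\|(\lambda,u)-(\lambda_h,u_h)\|_{\eps}\le \tfrac{\eta(\eps)}{\eps}\eps^{d/4}M\le \tfrac{\eta(\eps)}{\eps}\eps^{d/4}$. Uniqueness is inherited from the uniqueness of the fixed point in $K_\eps^M$. To be careful, I would phrase it as uniqueness within the ball of Theorem~\ref{thrm:loc-existence:u_h}, possibly with the constant $M$ absorbed into the statement. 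The theorem also shows that $u_h$ is a discrete local minimizer.

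For the error estimate, split
\[
u-u_h=(u-P_hu)+(P_hu-u_h).
\]
By Lemma~\ref{lem:defect-est}, $\|u_h-P_hu\|_{H^1_\eps(\D)}\le C M\|u-u_h\|_{H^1_\eps(\D)}$. The triangle inequality then yields
\[
\|u-u_h\|_{H^1_\eps(\D)}\le \|u-P_hu\|_{H^1_\eps(\D)}+CM\|u-u_h\|_{H^1_\eps(\D)},
\]
and since $CM\le\tfrac12$ the last term can be absorbed into the left-hand side. This gives $\|u-u_h\|_{H^1_\eps(\D)}\lesssim\|u-P_hu\|_{H^1_\eps(\D)}$. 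Since $u\in T_{\ci u}\S$, Lemma~\ref{lem:H1-ritz-proj-est} bounds this by $\inf_{v_h\in V_h}\|u-v_h\|_{H^1_\eps(\D)}$, which proves the quasi-best approximation property.

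Finally, nodal interpolation together with $\|D^2u\|_{L^2(\D)}\lesssim\eps^{-2}$ from Lemma~\ref{lem:stability-bounds-groundstates} gives
\[
\inf_{v_h}\|u-v_h\|_{H^1_\eps(\D)}\lesssim (h^2\eps^{-1}+h)\,\eps^{-2}\lesssim h\,\eps^{-2}
\]
for $h\lesssim\eps$; this is also the content of Conclusion~\ref{conc:H1-ritz-proj-est-gs}. The main subtlety is the order in which constants are chosen. $M$ must be fixed first, small enough for Theorem~\ref{thrm:loc-existence:u_h}, for Lemma~\ref{lem:defect-est}, and for the absorption step, and only afterwards is $c^\ast$ determined. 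All hidden constants are $\eps$-independent, so this ordering is consistent, and no further analytic difficulty remains beyond the lemmas already established.
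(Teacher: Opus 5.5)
Your proposal is correct and follows the paper's proof essentially step for step. The paper also fixes $M$ small (taking $M=\tfrac{1}{2C}$), invokes Theorem~\ref{thrm:loc-existence:u_h}, splits $u-u_h=(u-P_hu)+(P_hu-u_h)$, absorbs the defect bound from Lemma~\ref{lem:defect-est}, and concludes with Lemma~\ref{lem:H1-ritz-proj-est} and Conclusion~\ref{conc:H1-ritz-proj-est-gs}. Your caveat on uniqueness is well taken: both your argument and the paper's give uniqueness only within the set $K_\eps^M$ of Theorem~\ref{thrm:loc-existence:u_h}, which also imposes the $L^2$- and $L^4$-constraints, and not within the plain $\|\cdot\|_{\eps}$-ball appearing in the statement.
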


\begin{proof}
We apply Theorem \ref{thrm:loc-existence:u_h} for some sufficiently small $M$ to be specified later and the mesh size condition $h \le c^{\ast} \,\eps \,\eta(\eps) \,\eps^{d/2} M$ to obtain existence of a suitable discrete minimizer $u_h$ which fulfills the first and second-order conditions for (local) minimizers and the abstract bound 
$$
\| (\lambda,u) - (\lambda_h,u_h) \|_{\eps} \,\,\,\le\,\,\, \tfrac{\eta(\eps)}{\eps} \, \eps^{d/4} M. 
$$
We split the error as
\begin{align*}
\| u - u_h \|_{H^1_{\eps}(\D)} \,\,\, \lesssim \,\,\, \| u - P_h u \|_{H^1_{\eps}(\D)} + \| P_h u - u_h \|_{H^1_{\eps}(\D)}.\end{align*}
For the first term we apply Lemma \ref{lem:H1-ritz-proj-est} and for the second term Lemma \ref{lem:defect-est}. This yields, for some constant $C>0$,
\begin{align*}
\| u - u_h \|_{H^1_{\eps}(\D)} \,\,\, &\le \,\,\, C \left( \inf_{v_h \in V_h}  \| u - v_h \|_{H^1_{\eps}(\D)}+ M  \| u-u_h \|_{H^1_{\eps}(\D)} \right).
\end{align*}
Hence, for $M= \tfrac{1}{2C}$, we can absorb the corresponding term into the left hand side. Together with Conclusion \ref{conc:H1-ritz-proj-est-gs}, this proves the error estimate. 
\end{proof}

The next lemma relates the energy error to the error $u-u_h$.
\begin{lemma}\label{lem:energy-diff-quadratic}
Assume \ref{A1}-\ref{A3} and let $(\lambda,u) \in \R \times \S$ be an eigenpair, i.e., $E^{\prime}(u) = \lambda \calI u$. Then, for any $v_h\in \S$, it holds 
\begin{eqnarray*}
 \lefteqn{ E(v_h) - E(u) } \\
  &\le& \tfrac{1}{2}\,a_\eps(v_h-u,v_h-u)
     - \tfrac{\lambda}{2}\,\| v_h-u \|_{L^2(\D)}^2 + \tfrac{3\beta}{\eps^2}\int_0^1 \int_\D (1-t) \,|u+t (v_h-u)|^2\,|v_h-u|^2\dx \dt.
\end{eqnarray*}
If $u$ is a ground state, then $\lambda\ge 0$ and $E(v_h) \ge E(u)$ and the estimate simplifies to 
\begin{eqnarray*}
E(v_h) - E(u) 
  &\le& \tfrac{1}{2}\,a_\eps(v_h-u,v_h-u) + \tfrac{3\beta}{\eps^2}\int_0^1 \int_\D (1-t) \,|u+t (v_h-u)|^2\,|v_h-u|^2\dx \dt.
\end{eqnarray*}
\end{lemma}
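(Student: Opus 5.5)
Write $e := v_h - u$ and expand the energy exactly, using the representation \eqref{energy-expressed-aeps}. The quadratic part gives
\begin{align*}
\tfrac12 a_\eps(v_h,v_h) - \tfrac12 a_\eps(u,u) = a_\eps(u,e) + \tfrac12 a_\eps(e,e).
\end{align*}
For the quartic part, $F(w):=\tfrac{\beta}{4\eps^2}\|w\|_{L^4(\D)}^4$, apply Taylor's formula with integral remainder along $t\mapsto u+te$:
\begin{align*}
F(v_h)-F(u) = \langle F'(u),e\rangle + \int_0^1 (1-t)\,\langle F''(u+te)e,e\rangle \dt .
\end{align*}
Here $\langle F'(u),e\rangle = \tfrac{\beta}{\eps^2}(|u|^2u,e)_{L^2(\D)}$. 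From \eqref{sd-new},
\begin{align*}
\langle F''(w)e,e\rangle = \tfrac{\beta}{\eps^2}\int_\D \big(|w|^2|e|^2 + 2(\Re(w\bar e))^2\big)\dx \le \tfrac{3\beta}{\eps^2}\int_\D |w|^2|e|^2\dx .
\end{align*}
This gives the claimed remainder term with $w=u+te$.

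Combine the linear terms: $a_\eps(u,e)+\tfrac{\beta}{\eps^2}(|u|^2u,e)_{L^2(\D)} = \langle E'(u),e\rangle = \lambda(u,e)_{L^2(\D)}$ by the eigenpair relation. Since both $u$ and $v_h$ lie in $\S$, the polarization identity used in \eqref{step-proof-def-aux-1} gives
\begin{align*}
(u,e)_{L^2(\D)} = (u,v_h)_{L^2(\D)}-1 = -\tfrac12\|v_h-u\|_{L^2(\D)}^2 .
\end{align*}
So the linear term equals $-\tfrac{\lambda}{2}\|e\|_{L^2(\D)}^2$. Adding the three contributions yields the first inequality; it is an equality except for the bound on $F''$.

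For the ground-state case, $\lambda>0$ was shown in Section~2 from $\lambda=\langle E'(u),u\rangle = a_\eps(u,u)+\tfrac{\beta}{\eps^2}\|u\|_{L^4(\D)}^4$, with $a_\eps(u,u)\ge 0$ by \eqref{eq:aeps-strong-coercive}. Hence the term $-\tfrac{\lambda}{2}\|e\|^2_{L^2(\D)}$ is nonpositive and may be dropped. The inequality $E(v_h)\ge E(u)$ is just minimality over $\S$. The only point requiring care is the pointwise bound on the second derivative of the quartic term; the remaining steps are direct algebra.
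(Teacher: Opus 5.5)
Your proof is correct and is essentially the paper's argument. Both are a second-order Taylor expansion about the critical point $u$, using the same pointwise bound $(\Re(w\overline{e}))^2\le |w|^2|e|^2$ on the quartic second derivative. The only difference is bookkeeping: the paper Taylor-expands the Lagrangian $E-\tfrac{\lambda}{2}\|\cdot\|_{L^2(\D)}^2$, whose derivative vanishes at $u$ and whose differences coincide with those of $E$ on $\S$. You instead evaluate the first-order term explicitly via $(u,v_h-u)_{L^2(\D)}=-\tfrac12\|v_h-u\|_{L^2(\D)}^2$, which is the same sphere-constraint identity.
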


\begin{proof}
Define the Lagrangian $\mathcal{L}(v) := E(v) - \tfrac{\lambda}{2}\|v\|_{L^2(\D)}^2$. For $u,v_h\in \S$ we get
\begin{align*}
  E(v_h) - E(u) = \mathcal{L}(v_h) - \mathcal{L}(u).
\end{align*}
Let $e_h := v_h - u$ and consider the segment $u_t := u + t e_h$, $t\in[0,1]$. Since $\mathcal{L}'(u)=0$ by the eigenvalue equation for $u$, Taylor expansion yields
\begin{align*}
  \mathcal{L}(v_h) - \mathcal{L}(u)
  = \int_0^1 (1-t)\,\mathcal{L}''(u_t)[e_h,e_h]\dt,
\end{align*}
where $\mathcal{L}''(w)[e_h,e_h]$ denotes the second Fr\'echet derivative of $\mathcal{L}$ at $w$
applied twice to $e_h$. Next, we compute $\mathcal{L}''(w)[e_h,e_h]$. For that, recall $E(v) = \tfrac{1}{2}\,a_\eps(v,v) + \tfrac{\beta}{4\eps^2}\int_\D |v|^4\dx$, which gives for any $w \in H^1_0(\D)$ 
\begin{align*}
  E''(w)[e_h,e_h]
  &= a_\eps(e_h,e_h) + \tfrac{\beta}{\eps^2}\int_\D |w|^2\,|e_h|^2 + 2 (\Re(w \overline{e_h}))^2 \dx \\
  &\le a_\eps(e_h,e_h) + \tfrac{3\beta}{\eps^2}\int_\D |w|^2\,|e_h|^2 \dx
\end{align*}
On the other hand, the derivative of $\tfrac{1}{2}\|v\|_{L^2(\D)}^2$ is given by $\bigl(\tfrac{1}{2}\|\cdot\|_{L^2}^2\bigr)''(w)[e_h,e_h] = \|e_h\|_{L^2(\D)}^2$. Hence
\begin{align*}
  \mathcal{L}''(w)[e_h,e_h]
  = E''(w)[e_h,e_h] - \lambda \|e_h\|_{L^2(\D)}^2
  \le a_\eps(e_h,e_h)
    + \tfrac{3\beta}{\eps^2}\int_\D |w|^2\,|e_h|^2\dx
    - \lambda \|e_h\|_{L^2(\D)}^2.
\end{align*}
Setting $w = u_t = u + t e_h$ and inserting into the Taylor formula gives
\begin{eqnarray*}
  \mathcal{L}(v_h) - \mathcal{L}(u)
  &\le& \int_0^1 (1-t)\Big(
       a_\eps(e_h,e_h)
       + \tfrac{3\beta}{\eps^2}\int_\D |u_t|^2\,|e_h|^2\dx
       - \lambda \|e_h\|_{L^2(\D)}^2
     \Big)\dt \\
  &=& \tfrac{1}{2}\,a_\eps(e_h,e_h)
     - \tfrac{\lambda}{2}\,\|e_h\|_{L^2(\D)}^2
     + \tfrac{3\beta}{\eps^2}\int_0^1 (1-t)
        \int_\D |u+t e_h|^2\,|e_h|^2\dx \dt.
\end{eqnarray*}
Recalling that $\mathcal{L}(v_h) - \mathcal{L}(u) = E(v_h) - E(u)$ finishes the proof.
\end{proof}
At first glance, the energy error inherits the same restrictions on the mesh size as the error $\| u - u_h \|_{H^1_{\eps}(\D)}$. However, note that Lemma \ref{lem:energy-diff-quadratic} holds for any $u_h \in V_h \cap \S$. In particular, we can directly apply it to $\tfrac{1}{\| I_h u \|_{L^2(\D)}} I_h u$ for some suitable interpolation operator $I_h$. This can be bounded without the local existence result and smallness from Theorem \ref{thrm:loc-existence:u_h}. We obtain the following.
\begin{conclusion}[Energy error]\label{conclusion:energy-error}
Assume \ref{A1}-\ref{A3} and let $u \in \S$ denote a ground state and $u_h \in V_h \cap \S$ an arbitrary discrete ground state. Then there exists a generic constant $C^*>0$ (independent of $h$ and $\eps$) such that if $h \le C^{\ast} \eps$ it holds
\begin{align*}
0 \,\,\,\, \le \,\,\,\, E(u_h) - E(u) \,\,\,\,\lesssim\,\,\,\, 
\inf_{v_h \in V_h} \| u - v_h \|_{H^1_{\eps}(\D)}^2 + \tfrac{1}{\varepsilon^2} \inf_{v_h \in V_h} \| u - v_h \|_{L^4(\D)}^4 
\,\,\,\,\lesssim\,\,\,\, \left( \tfrac{h}{\eps^2} \right)^2.
\end{align*}
\end{conclusion}

\begin{proof}
We apply Lemma \ref{lem:energy-diff-quadratic} to  $v_h = \tfrac{1}{\| I_h u \|_{L^2(\D)}} I_h u$ where $I_h : H^1_0(\D) \rightarrow V_h$ denotes, e.g., the Ern-Guermond-quasi-interpolation operator \cite[Section~5]{ErnGuermond2017}, which admits the classical approximation and stability properties ($d\le 3$)
\begin{align}
\label{stability-interpolation-estimates-Ih}
\| v - I_h v \|_{L^{p}(\D)} \lesssim h \| \nabla v \|_{L^{p}(\D)}
\qquad \mbox{and}
\qquad | v - I_h v |_{H^{1}(\D)} \lesssim h |v |_{H^2(\D)} 
\end{align}
for all $v\in H^2(\D) \cap H^1_0(\D)$ and $1 \le p \le 6$. By these properties and the reverse triangle inequality we have
\begin{align*}
\|I_h u\|_{L^2(D)} &\ge \|u\|_{L^2(\D)} - \|I_h u - u\|_{L^2(\D)} 
\ge 1 - C_{\mathrm{int}}\, h\, \|\nabla u\|_{L^2(D)} \\
&\ge 1 - C_{\mathrm{int}}\, C\, \tfrac{h}{\eps},
\end{align*}
where $C_{\mathrm{int}}>0$ is the constant from the interpolation estimate and $C$ the constant in the stability estimate $\| \nabla u\| \le C \tfrac{1}{\eps}$. Hence, for $\tfrac{h}{\eps} \le \tfrac{1}{2} (C_{\mathrm{int}}\, C)^{-1}$ we have
\begin{align}
\label{lower-bound_Ih}
\|I_h u\|_{L^2(D)} \ge \tfrac{1}{2}.
\end{align}
Now let $e_h := u - \tfrac{1}{\| I_h u \|_{L^2(\D)}} I_h u$, then by $u \in \S$ we have the pointwise bound for the error as
\begin{eqnarray*}
|e_h| &=& \left| \frac{\left( \| I_h u \|_{L^2(\D)} - \| u \|_{L^2(\D)} \right) u  \,+\, (u - I_h u) }{  \| I_h u \|_{L^2(\D)} } \right| 
\,\,\overset{\eqref{lower-bound_Ih}}{\le}\,\,
\tfrac{1}{2} \left( \| I_h u - u \|_{L^2(\D)} \, |u| \,+\, |u - I_h u| \right) \\
&\overset{\mbox{\tiny Lem.~\ref{lem:stability-bounds-groundstates}}}{\le}& C \| I_h u - u \|_{L^2(\D)} \,+\, \tfrac{1}{2} | I_h u - u |.
\end{eqnarray*}
From this inequality and the natural embeddings we conclude that, for $p\ge 2$,
\begin{align}
\label{est-fe-interp}
\| e_h \|_{L^p(\D)} \,\,\lesssim\,\, \|  I_h u - u  \|_{L^p(\D)} 
\qquad
\mbox{and}
\qquad
\| e_h \|_{H^1_{\eps}(\D)} \,\,\lesssim\,\, \|  I_h u - u  \|_{H^1_{\eps}(\D)}. 
\end{align}
With $v_h = \tfrac{1}{\| I_h u \|_{L^2(\D)}} I_h u$, Lemma \ref{lem:energy-diff-quadratic} together with the continuity of \(a_\varepsilon(\cdot,\cdot)\) yields 
\begin{eqnarray*}
E(u_h)-E(u)
&\le&\tfrac{1}{2} a_\eps(e_h,e_h)
+\tfrac{3\beta}{\eps^2}
\int_0^1\!\!\int_D (1-t)\,
|u+t \hspace{1pt} e_h|^2\,|e_h|^2\dx\dt \\
&\lesssim& \| e_h \|_{H_\varepsilon^1(\D)}^2 
+ \tfrac{1}{\varepsilon^2} \int_0^1\!\!\int_{\D} (1-t)\, |u+t \hspace{1pt}e_h|^2\,|e_h|^2\dx\dt \\
&\overset{\eqref{est-fe-interp}}{\lesssim}&  \|  I_h u - u  \|_{H^1_{\eps}(\D)}^2 
+ \tfrac{1}{\varepsilon^2} \int_0^1\!\!\int_{\D} (1-t)\, |u+t \hspace{1pt}e_h|^2\,|e_h|^2\dx\dt \\
&\overset{\mbox{\tiny Lem.~\ref{lem:stability-bounds-groundstates}}}{\lesssim}&  \|  I_h u - u  \|_{H^1_{\eps}(\D)}^2 
+ \tfrac{1}{\varepsilon^2} \| e_h \|_{L^2(\D)}^2 + \tfrac{1}{\varepsilon^2} \| e_h \|_{L^4(\D)}^4 \\
&\overset{\eqref{est-fe-interp}}{\lesssim}& 
\|  I_h u - u  \|_{H^1_{\eps}(\D)}^2 + \tfrac{1}{\varepsilon^2} \|  I_h u - u  \|_{L^4(\D)}^4 \\
&\overset{\eqref{stability-interpolation-estimates-Ih}}{\lesssim}& 
\tfrac{1}{\eps^2} \left( \tfrac{h^2}{\eps^2} + \tfrac{h^4}{\eps^4} \right) \,\,\lesssim \,\,\, \tfrac{1}{\eps^2}  \tfrac{h^2}{\eps^2},
\end{eqnarray*}
where we used the stability estimate $\| \nabla u \|_{L^4(\D)} \lesssim \tfrac{1}{\eps}$ from Lemma~\ref{lem:stability-bounds-groundstates} in the last step. Note that the interpolation errors in the intermeidate step can be replaced by best-approximation errors (up to constants), because $I_h$ is both $L^4$-stable and $H^1_{\eps}$-stable. This finishes the proof.
\end{proof}

\section{Numerical experiments}
\label{sec:num-experiments}

In this section, we present a series of numerical experiments 
supporting the main theoretical findings of this work. Specifically, we illustrate the vortex structure of the ground state as the system approaches the rapid-rotation Thomas--Fermi regime, we verify the stability estimate $\|u\|_{H^1_\varepsilon(\mathcal{D})} \lesssim 
\varepsilon^{-1}$ implied by Lemma~\ref{lem:stability-bounds-groundstates}, and we examine both the resolution condition and the asymptotic error behavior established in Theorem~\ref{theorem-main-result}.

Naturally, the computation of the ground state constitutes the starting point for all subsequent experiments. As already defined in~\eqref{definition-groundstate}, the 
ground state arises as the global minimizer of the Gross--Pitaevskii energy functional over the unit sphere $\mathbb{S}$,
\begin{equation*}
    E(u) = \inf_{v \in \mathbb{S}}\, E(v), \qquad 
    \mathbb{S} := \bigl\{ v \in H^1_0(\mathcal{D}) \;\big|\; 
    \|v\|_{L^2(\mathcal{D})} = 1 \bigr\}.
\end{equation*}
To compute it numerically, the energy functional is 
discretized using conforming $\mathbb{P}^1$ Lagrange finite elements  on a triangular mesh of mesh size $h$, implemented in the 
\textsc{FEniCSx} framework~\cite{baratta_2023_10447666,Scroggs_2022,
Scroggs2022,alnaes2013unifiedformlanguagedomainspecific}. 
The resulting constrained minimization problem is then solved iteratively using 
the Riemannian Sobolev conjugate gradient (RSCG) scheme 
of~\cite{AHYY24}, equipped with an energy-adaptive Sobolev metric 
and the Polak--Rib\`ere momentum parameter. In addition to that, following 
\cite{Bao-et-al-2005}, the iterative solver is initialized with the 
$L^2$-normalized interpolant of
\begin{align*}
    u_0(x_1,x_2)
    \;:=\;(x_1 + \ci x_2)\,
    e^{-(x_1^2+x_2^2)/2},
\end{align*}
and terminated when the energy decrease between consecutive iterates 
satisfies
\[
|E(u_h^{n+1}) - E(u_h^n)| < 10^{-11}.
\]
All experiments are carried out in two spatial dimensions ($d=2$) on the 
rectangular domain $\mathcal{D} = [-0.9,\,0.9]\times[-1.4,\,1.4]$. We choose the physical parameters so as to place the system in a well-controlled yet physically meaningful regime. The angular velocity is 
fixed at $\Omegaeps = 9$, below the critical value 
$\Omega_{\mathrm{crit}}$, so that Assumption \ref{A3} is satisfied and the 
effective trapping potential $V_R$ remains confining throughout $\mathcal{D}$. 
The particle interaction strength is set to $\beta = 10$, of the same order 
as $\Omegaeps$, while the trapping 
potential is taken to be slightly anisotropic and harmonic,
\begin{align*}
\Veps(x) \;=\; 26\cdot \!\left(1.25\,x_1^2 + 0.98\,x_2^2\right).
\end{align*}
The mild anisotropy of $\Veps$ aids in minimizing the rotational symmetry of the trap. This is particularly important for the numerical error estimates, since on a fully symmetric trap the discrete minimizer may settle into a rotated but energetically equivalent vortex configuration. This would introduce a spurious growth in the error due to vortex misalignment rather than a true resolution deficiency. Hence, by applying an anisotropic potential we ensure that the observed pre-asymptotic regime genuinely reflects the resolution condition of Theorem~\ref{theorem-main-result} and is not an artifact of misalignment approximations.

The complete numerical implementation is openly available at \url{https://github.com/chrisplh258/gpe-fem}. All computations were carried out on the Dardel supercomputer at the PDC Center for High Performance Computing, KTH Royal Institute of Technology, using resources provided by the National Academic Infrastructure for Supercomputing in Sweden (NAISS), partially funded by the Swedish Research Council through grant agreement no.\ 2022-06725. Finally, AI based tools were used to assist in optimizing parts of the simulation code, fully reviewed and verified by the authors.

\subsection{Vortex structure and stability of the ground state}
Resolving the ground state becomes increasingly demanding as 
$\varepsilon$ decreases: the vortex cores shrink, their number grows, 
and the energy landscape near the ground state flattens. Each effect 
independently drives the need for finer spatial resolution. 
Figure~\ref{fig:density} illustrates the first two of these effects, 
displaying the density $\rho = |u_{\mathrm{ref}}|^2$ of the reference 
ground state, computed on a fine mesh of size $h_{\mathrm{ref}} = 2^{-10}$, 
for four decreasing values of $\varepsilon$. As $\varepsilon$ decreases 
toward the Thomas--Fermi regime, the number of vortices grows rapidly 
and their cores shrink to size $\mathcal{O}(\varepsilon)$.

\begin{figure}[ht]
\centering
\begin{subfigure}{0.24\textwidth}
\centering
\includegraphics[width=\linewidth]{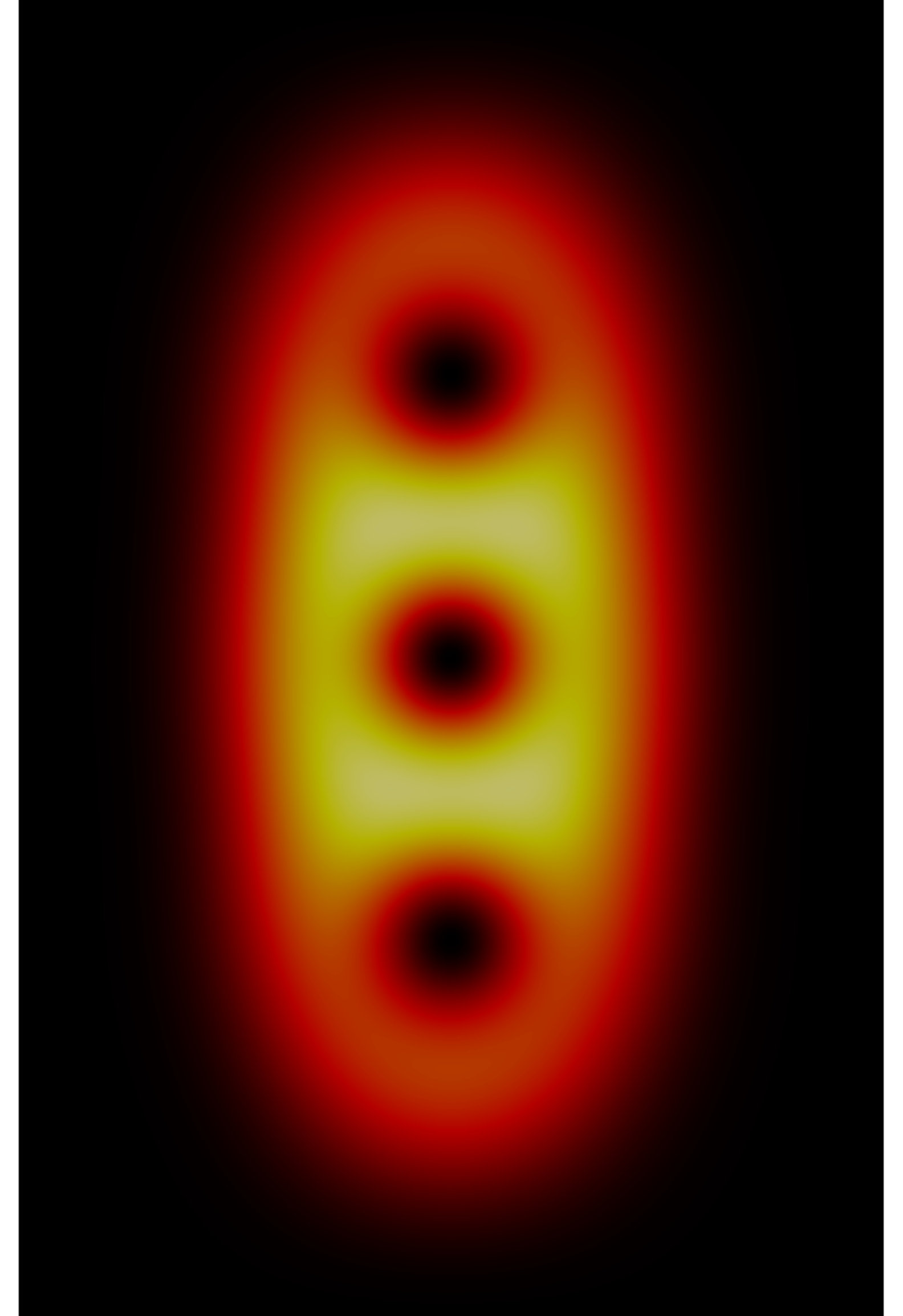}
\caption{$\epsilon=0.40$}
\end{subfigure}
\begin{subfigure}{0.24\textwidth}
\centering
\includegraphics[width=\linewidth]{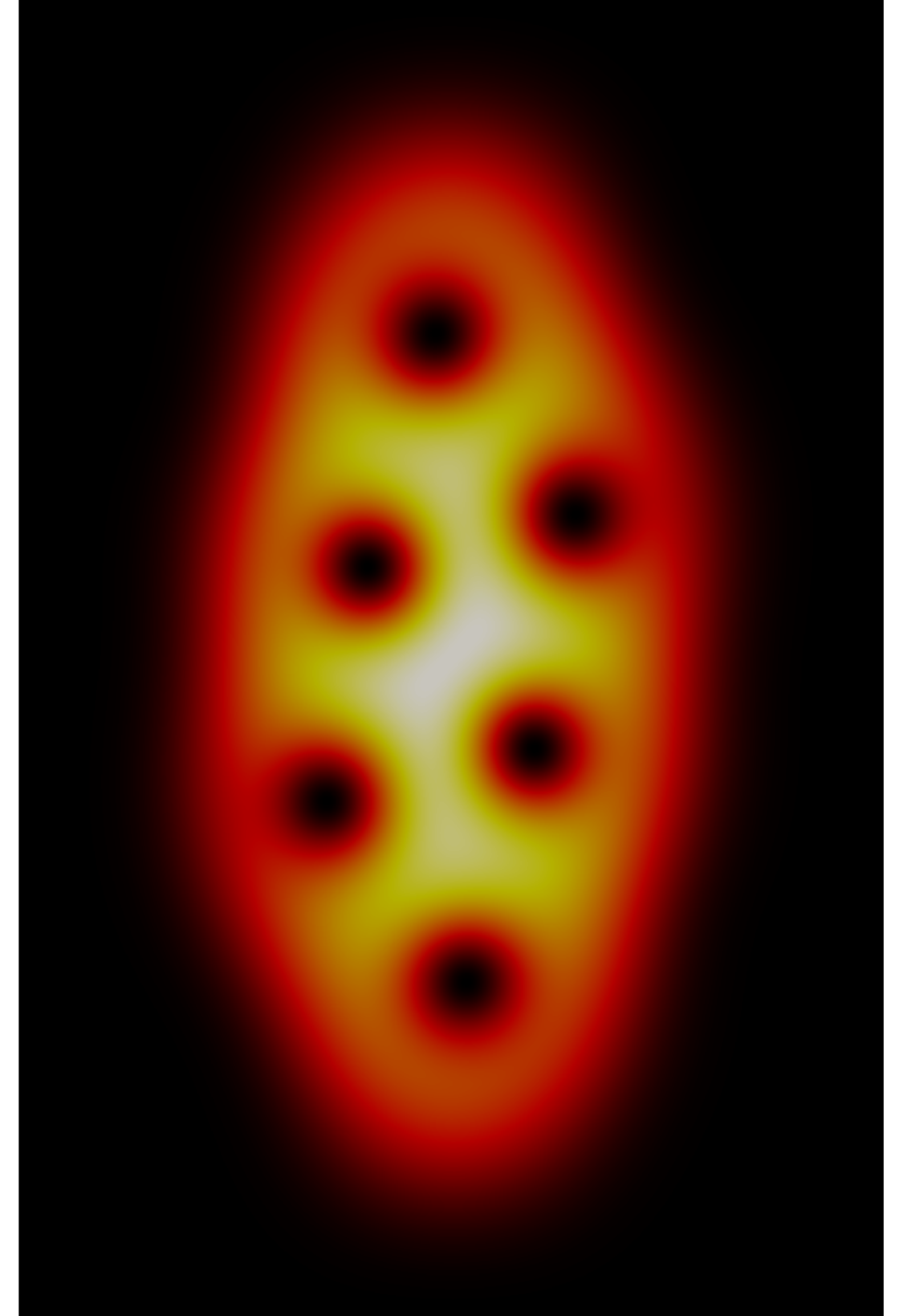}
\caption{$\epsilon=0.25$}
\end{subfigure}
\begin{subfigure}{0.24\textwidth}
\centering
\includegraphics[width=\linewidth]{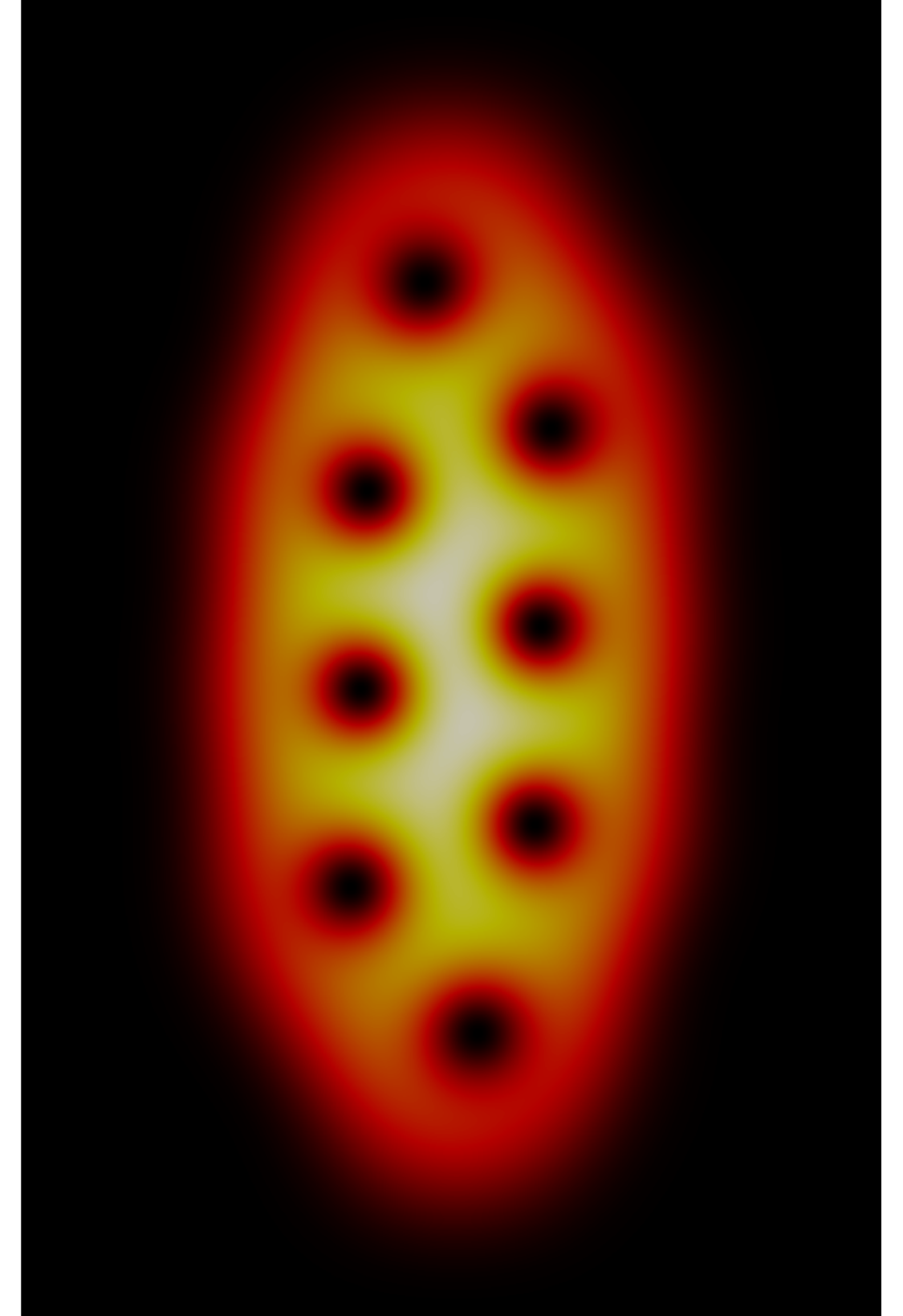}
\caption{$\epsilon=0.20$}
\end{subfigure}
\begin{subfigure}{0.24\textwidth}
\centering
\includegraphics[width=\linewidth]{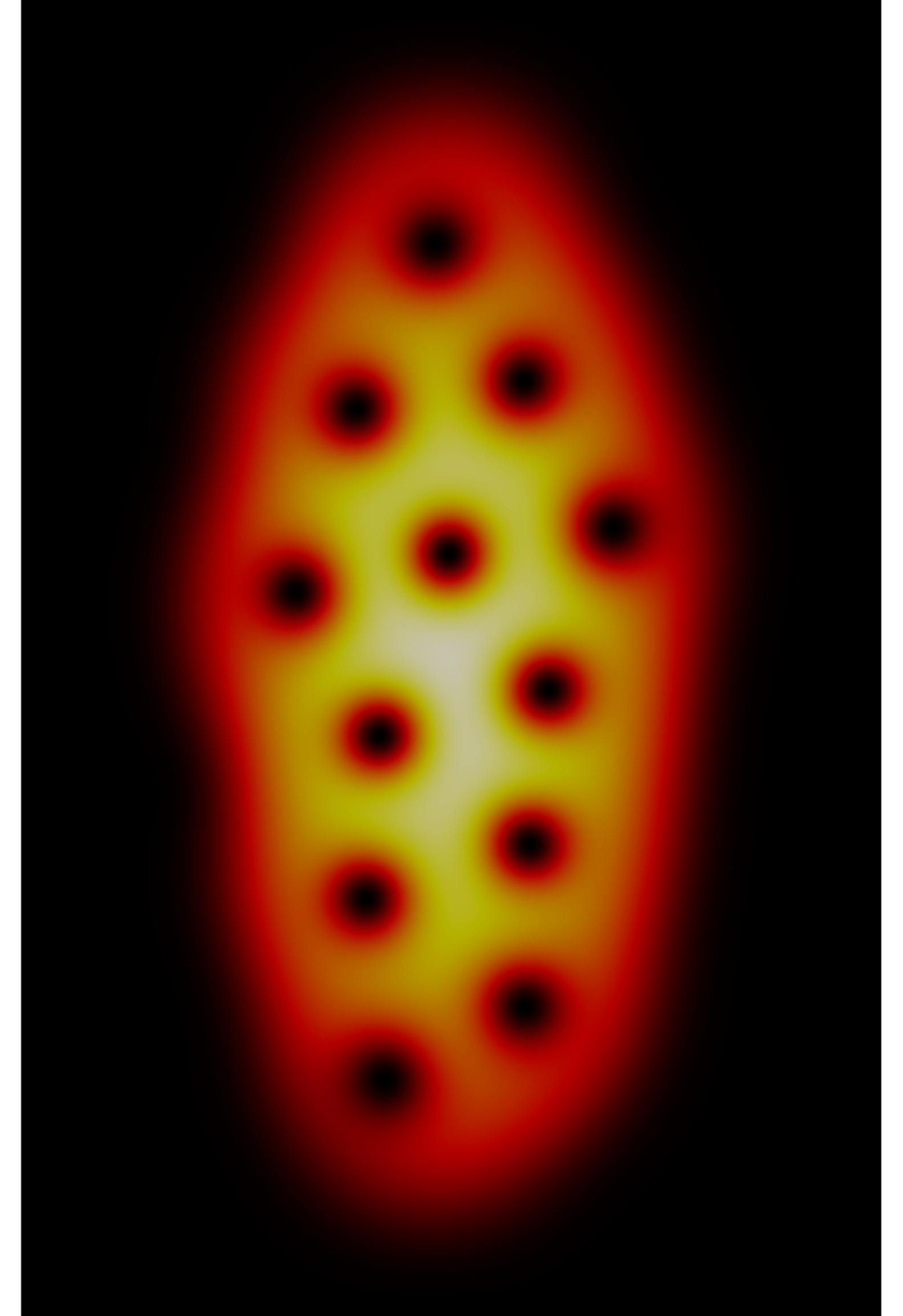}
\caption{$\epsilon=0.15$}
\end{subfigure}
\caption{Density $\rho = |u_{\mathrm{ref}}|^2$ of the reference 
ground state for $\varepsilon \in \{0.15, 0.20, 0.25, 0.40\}$. 
As $\varepsilon$ decreases, the vortex cores shrink and the vortex 
lattice becomes increasingly dense.}
\label{fig:density}
\end{figure}

From there we turn to the stability bound of 
Lemma~\ref{lem:stability-bounds-groundstates}, which asserts that $\|\nabla u\|_{L^2(\mathcal{D})} \lesssim 
\varepsilon^{-1}$, 
or equivalently that $\|u\|_{H^1_\varepsilon(\mathcal{D})} \lesssim \varepsilon^{-1}$, and confirm this bound numerically by tracking 
the $H^1_\varepsilon$ norm of the reference ground state as a 
function of $\varepsilon$. As seen in Figure~\ref{fig:h1-vs-eps}, the norm scales in agreement with the theoretical $\mathcal{O}(\varepsilon^{-1})$ rate.

\begin{figure}[ht]
\centering
\begin{tikzpicture}
\begin{axis}[
    width=0.7\textwidth,
    height=0.5\textwidth,
    xmode=log,
    ymode=log,
    xmin=0.15, xmax=0.40,
    xlabel={$\varepsilon$},
    ylabel={$\|u\|_{H^1_\varepsilon}$ },
    legend pos=north east,
    tick label style={font=\scriptsize},
    label style={font=\small},
    legend style={font=\scriptsize},
    minor tick num=0
]

\addplot[black, line width=0.5pt, mark=square*, mark size=1.8pt,
         mark options={fill=white}]
    table[x=Eps, y=H1_eps, col sep=comma]{plots/h1_eps.csv};
\addlegendentry{$\|u\|_{H^1_\varepsilon}$}


\addplot[Maroon, thick, dotted, domain=0.13:0.45, samples=200]
    {3.0*x^(-1)};
\addlegendentry{$\mathcal{O}(\varepsilon^{-1})$}

\end{axis}
\end{tikzpicture}
\caption{Scaled $H^1_\varepsilon$ norm
for a varying $\varepsilon$.}
\label{fig:h1-vs-eps}
\end{figure}

\subsection{Resolution conditions}
In this section we numerically investigate the resolution condition 
of Theorem~\ref{theorem-main-result}. A reference solution 
$u_{\mathrm{ref}}$ is computed on a mesh of size $h = 2^{-10}$, 
and errors are then measured on a sequence of coarser meshes of size 
$h = 2^{-k}$ for $k \in \{2,\,3,\,3.5,\,\ldots,\,8.5\}$. To ensure that the observed errors reflect discretization effects 
alone, and not the choice of initial condition, the 
discrete minimizer on each coarse mesh is computed by initializing 
the Riemannian gradient method with the $L^2$-projection of 
$u_{\mathrm{ref}}$ onto the respective coarse finite element space. 
Three quantities are tracked as functions of $h$: the energy error 
$|E_{\mathrm{ref}} - E_h|$, the scaled FEM error 
$\|u_{\mathrm{ref}} - u_h\|_{H^1_\varepsilon(\mathcal{D})}$, and 
the best-approximation error 
$\|u_{\mathrm{ref}} - P_h u_{\mathrm{ref}}\|_{H^1_\varepsilon(\mathcal{D})}$,
where $P_h$ denotes the $H^1_\varepsilon$-projection onto the finite element space $V_h$, 
giving the closest possible approximation to $u_{\mathrm{ref}}$ in 
that space. 

Figure~\ref{fig:error-comparison} reveals a clear contrast between 
the two error quantities. Since the energy is a continuous functional, no pre-asymptotic 
regime is expected for the energy error, a fact confirmed theoretically 
by Conclusion \ref{conclusion:energy-error}. The energy error thus, enters its asymptotic regime $\mathcal{O}(h^2)$ already on coarse 
meshes, with no visible pre-asymptotic threshold. The scaled $H^1_\varepsilon$ error, by contrast, exhibits a 
pronounced pre-asymptotic plateau, shifting to increasingly finer 
scales as $\varepsilon$ decreases. This is a clear indicator of a 
resolution condition that must be satisfied before the asymptotic 
regime is entered, and which becomes increasingly stricter as 
$\varepsilon$ decreases.


\begin{figure}[H]
\centering
\makebox[\textwidth][c]{%


\begin{subfigure}{0.49\textwidth}
\centering
\begin{tikzpicture}
\begin{axis}[
    width=\linewidth,
    height=0.9\linewidth,
    xmode=log,
    ymode=log,
    xmin=2e-3, xmax=1,
    ymin=1e-5, ymax=1e2,
    xlabel={$h$},
    ylabel={$|E-E_h|$},
    legend pos=south east,
    minor tick num=0,
    tick label style={font=\scriptsize},
    label style={font=\small},
    legend style={font=\scriptsize}
]

\addplot[Maroon, thick, dotted, domain=1e-3:5e-1, samples=200]
{1e1*x^2};
\addlegendentry{$\mathcal{O}(h^2)$}

\addplot[
    line width=0.5pt,
    color=black!35,
    mark=*,
    mark size=1.8pt,,
]
table[
    x=h,
    y=energy_error,
    col sep=comma
]
{plots/errors_eps0.15_20260502_170737.csv};
\addlegendentry{$\varepsilon = 0.15$}

\addplot[
    line width=0.5pt,
    color=black!55,
    mark=square*,
    mark size=1.8pt,
]
table[
    x=h,
    y=energy_error,
    col sep=comma
]
{plots/errors_eps0.2_20260430_135308.csv};
\addlegendentry{$\varepsilon = 0.20$}

\addplot[
    line width=0.5pt,
    color=black!75,
    mark=triangle*,
    mark size=2pt,
]
table[
    x=h,
    y=energy_error,
    col sep=comma
]
{plots/errors_eps0.25_20260430_135819.csv};
\addlegendentry{$\varepsilon = 0.25$}

\addplot[
    line width=0.5pt,
    black,
    mark=diamond*,
    mark size=2pt,
]
table[
    x=h,
    y=energy_error,
    col sep=comma
]
{plots/errors_eps0.4_20260503_214818.csv};
\addlegendentry{$\varepsilon = 0.40$}
\end{axis}
\end{tikzpicture}
\caption{Energy error $|E-E_h|$.}
\label{fig:energy-error}
\end{subfigure}%
\hspace{0.01\textwidth}%

\begin{subfigure}{0.49\textwidth}
\centering
\begin{tikzpicture}
\begin{axis}[
    width=\linewidth,
    height=0.9\linewidth,
    xmode=log,
    ymode=log,
    xmin=2e-3, xmax=1,
    ymin=1e-2, ymax=1e2,
    xlabel={$h$},
    ylabel={$\|u-u_h\|_{H^1_\epsilon}$},
    legend pos=south east,
    minor tick num=0,
    legend style={font=\footnotesize},
    tick label style={font=\footnotesize},
    label style={font=\footnotesize},
]

\addplot[Maroon, thick, dotted, domain=1e-3:5e-1, samples=200]
{1e1*x};
\addlegendentry{$\mathcal{O}(h)$}

\addplot[
    line width=0.5pt,
    color=black!35,
    mark=*,
    mark size=1.8pt,
]
table[
    x=h,
    y=H1_fem_error,
    col sep=comma
]
{plots/errors_eps0.15_20260502_170737.csv};
\addlegendentry{$\varepsilon = 0.15$}

\addplot[
    line width=0.5pt,
    color=black!55,
    mark=square*,
    mark size=1.8pt,
]
table[
    x=h,
    y=H1_fem_error,
    col sep=comma
]
{plots/errors_eps0.2_20260430_135308.csv};
\addlegendentry{$\varepsilon = 0.20$}

\addplot[
    line width=0.5pt,
    color=black!75,
    mark=triangle*,
    mark size=2pt,
]
table[
    x=h,
    y=H1_fem_error,
    col sep=comma
]
{plots/errors_eps0.25_20260430_135819.csv};
\addlegendentry{$\varepsilon = 0.25$}

\addplot[
    line width=0.5pt,
    black,
    mark=diamond*,
    mark size=2pt,
]
table[
    x=h,
    y=H1_fem_error,
    col sep=comma
]
{plots/errors_eps0.4_20260503_214818.csv};
\addlegendentry{$\varepsilon = 0.4$}

\end{axis}
\end{tikzpicture}
\caption{Scaled FEM error $\|u-u_h\|_{H^1_\epsilon}$.}
\label{fig:fem-error}
\end{subfigure}%
}

\caption{Energy error (left) and scaled $H^1_\varepsilon$ finite element error
(right) as functions of mesh size $h$, for
$\varepsilon \in \{0.15, 0.20, 0.25, 0.40\}$.}
\label{fig:error-comparison}
\end{figure}

Figure~\ref{fig:all-errors} makes this resolution condition more 
precise. For each value of $\varepsilon$, the FEM error is plotted 
alongside the best-approximation error 
$\|u_{\mathrm{ref}} - P_h u_{\mathrm{ref}}\|_{H^1_\varepsilon}$, 
with vertical lines marking $h = \varepsilon$, $h = \varepsilon^2$, 
and $h = \varepsilon^3$. As expected, the best-approximation error achieves the 
optimal rate $\mathcal{O}(h)$ at all mesh sizes, confirming that $V_h$ possesses sufficient approximation properties. The finite element error, however, does not enter its asymptotic 
regime until a resolution condition strictly stronger than 
$h \lesssim \varepsilon^2$ is met, with the required threshold indicating higher powers of $\varepsilon$. This is 
consistent with Theorem~\ref{theorem-main-result}, which asserts such a condition of the form 
$h \,\,\le\,\,  c^{\ast}\, \mucrit \, (1- \tfrac{\lambda_{1}}{\lambda_{2}})\,\eps^{(d+2)/2} \,$, 
whose exact $\varepsilon$-dependence the experiments confirm 
qualitatively but do not attempt to quantify.


\newcommand{\errorplot}[1]{%
\pgfplotstableread[col sep=comma]{#1}\datatable
\pgfplotstablegetelem{0}{epsilon}\of\datatable
\pgfmathsetmacro{\epsval}{\pgfplotsretval}
\pgfmathsetmacro{\epsone}{\epsval}
\pgfmathsetmacro{\epstwo}{\epsval * \epsval}
\pgfmathsetmacro{\epsthree}{\epsval * \epsval * \epsval}
\begin{tikzpicture}
\begin{axis}[
    width=\linewidth,
    height=\linewidth,
    xmode=log,
    ymode=log,
    xmin=2e-3, xmax=1,
    ymin=1e-2, ymax=1e3,
    xlabel={$h$},
    ylabel={$\|u_{\mathrm{ref}}-u_h\|_{H^1_\varepsilon}$,\\
        $\|u_{\mathrm{ref}}-P_h u_{\mathrm{ref}}\|_{H^1_\varepsilon}$},
    minor tick num=0,
    legend pos=north west,
    legend style={font=\footnotesize},
    tick label style={font=\footnotesize},
    legend style={
        font=\footnotesize,
        legend columns=2,
        column sep=0.8em,
    },    
    ylabel style={align=center, font=\footnotesize},
]
\addplot[black, line width=0.8pt, mark=diamond*, mark size=1.5pt]
    table[x=h, y=H1_fem_error, col sep=comma]{#1};
\addlegendentry{$\|u_{\mathrm{ref}}-u_h\|_{H^1_\varepsilon}$}

\addplot[red!80!black, line width=0.7pt, dash pattern=on 5pt off 2pt]
    coordinates {(\epsone,1e-2) (\epsone,1e2)};
\addlegendentry{$\varepsilon$}

\addplot[gray, line width=0.8pt, mark=square*, mark size=1.5pt]
    table[x=h, y=H1_interpolation_error, col sep=comma]{#1};
\addlegendentry{$\|u_{\mathrm{ref}}-P_h u_{\mathrm{ref}}\|_{H^1_\varepsilon}$}

\addplot[orange!90!black, line width=0.7pt, dash pattern=on 4pt off 2pt on 1pt off 2pt]
    coordinates {(\epstwo,1e-2) (\epstwo,1e2)};
\addlegendentry{$\varepsilon^2$}

\addplot[black, thick, dotted, domain=1e-3:5e-1]{1e1*x};
\addlegendentry{$\mathcal{O}(h)$}

\addplot[brown!80!black, line width=0.7pt, dash pattern=on 3pt off 2pt]
    coordinates {(\epsthree,1e-2) (\epsthree,1e2)};
\addlegendentry{$\varepsilon^3$}
\end{axis}
\end{tikzpicture}
}

\begin{figure}[H]
\centering
\makebox[\textwidth][c]{%
\begin{subfigure}{0.47\textwidth}
\centering
\caption{$\varepsilon=0.40$}
\errorplot{plots/errors_eps0.4_20260503_214818.csv}
\end{subfigure}%
\hspace{0.03\textwidth}%
\begin{subfigure}{0.47\textwidth}
\caption{$\varepsilon=0.25$}
\centering
\errorplot{plots/errors_eps0.25_20260430_135819.csv}
\end{subfigure}
}
\vspace{0.2cm}
\makebox[\textwidth][c]{%
\begin{subfigure}{0.47\textwidth}
\centering
\caption{$\varepsilon=0.20$}
\errorplot{plots/errors_eps0.2_20260430_135308.csv}
\end{subfigure}%
\hspace{0.03\textwidth}%
\begin{subfigure}{0.47\textwidth}
\centering
\caption{$\varepsilon=0.15$}
\errorplot{plots/errors_eps0.15_20260502_170737.csv}
\end{subfigure}
}
\caption{Scaled $H^1_\varepsilon$ FEM error
$\|u_{\mathrm{ref}}-u_h\|_{H^1_\varepsilon}$ (black) and
best-approximation error
$\|u_{\mathrm{ref}}-P_h u_{\mathrm{ref}}\|_{H^1_\varepsilon}$
(gray) as functions of $h$, for
$\varepsilon\in\{0.40,\,0.25,\,0.20,\,0.15\}$.
Vertical lines mark $h=\varepsilon$, $h=\varepsilon^2$, and
$h=\varepsilon^3$.}
\label{fig:all-errors}
\end{figure}

The section closes with Figure~\ref{fig:eps-dependence} displaying the scaled $H^1_\varepsilon$ 
finite element error and best-approximation error against $\varepsilon$, 
computed on a fixed, sufficiently fine mesh $h \ll \varepsilon$ so that both quantities lie 
firmly within the asymptotic regime. Both grow at rate 
$\mathcal{O}(\varepsilon^{-2})$ as $\varepsilon \to 0$, confirming 
the $\varepsilon$-explicit asymptotic estimate of 
Theorem~\ref{theorem-main-result}. The close agreement between the 
two curves across the full range of $\varepsilon$ values indicates 
that the discrete minimizer tracks the best-approximation error, with 
no visible influence of the coercivity constant $\eta(\varepsilon)$. 
This is precisely what the theory predicts: once the resolution 
condition $h \,\,\le\,\,  c^{\ast}\, \mucrit \, (1- \tfrac{\lambda_{1}}{\lambda_{2}})\,\eps^{(d+2)/2} \,$ is 
satisfied and the discrete space is fine enough to detect the narrow 
valley of local convexity in the energy landscape, the FEM error 
enters its asymptotic regime and is governed entirely by the 
approximation power of $V_h$, independently of $\eta(\varepsilon)$.


\begin{figure}[H]
\centering
\begin{tikzpicture}
\begin{axis}[
    width=0.7\linewidth,
    height=0.5\linewidth,
    xmode=log,
    ymode=log,
    xmin=0.15, xmax=0.4,
    ymin=1e-2, ymax=0.7,
    xlabel={$\varepsilon$},
    ylabel={$\|u_{\mathrm{ref}}-u_h\|_{H^1_\varepsilon}$,\\
        $\|u_{\mathrm{ref}}-P_h u_{\mathrm{ref}}\|_{H^1_\varepsilon}$},
    legend pos=north east,
    tick label style={font=\scriptsize},
    label style={font=\small},
    ylabel style={align=center},
    legend style={font=\scriptsize},
    minor tick num=0
]

\addplot[Maroon, thick, dotted, domain=0.1:0.5, samples=200]
{0.002*x^(-2)};
\addlegendentry{$\mathcal{O}(\varepsilon^{-2})$}

\addplot[
    black,
    line width=0.5pt,
    mark=square*,
    mark size=1.8pt,
    mark options={fill=white}
]
table [x=epsilon, y=H1_fem_error, col sep=comma]
{plots/eps_vs_error.csv};
\addlegendentry{$\|u-u_h\|_{H^1_\epsilon}$}

\addplot[
    color=black!75,
    line width=0.5pt,
    mark=diamond*,
    mark size=1.8pt
]
table [x=epsilon, y=H1_interpolation_error, col sep=comma]
{plots/eps_vs_error.csv};
\addlegendentry{$\|u_{\mathrm{ref}}-P_h u_{\mathrm{ref}}\|_{H^1_\varepsilon}$}

\end{axis}
\end{tikzpicture}

\caption{
Finite element and interpolation errors in the scaled $H^1_\epsilon$ norm as functions of $\epsilon$.
}
\label{fig:eps-dependence}

\end{figure}

\end{document}